\theoremstyle{plain}
\newtheorem{theorem}{Theorem}[section]
\newtheorem{lemma}[theorem]{Lemma}
\newtheorem{proposition}[theorem]{Proposition}
\newtheorem{definition}[theorem]{Definition}
\numberwithin{equation}{section} \thispagestyle{empty}
\begin{document}
\baselineskip 12 truept
\title{On graphic elementary lifts of graphic matroids }
\author{Ganesh Mundhe$^1$}
\address{\rm 1. Army Institute of Technology, Pune-411015, INDIA }
\email{ganumundhe@gmail.com}
\author{Y. M. Borse$^2$}
 \address{\rm 2. Department of Mathematics, Savitribai Phule Pune University, Pune-411007, INDIA}
\email{ymborse11@gmail.com}
 \author{K. V. Dalvi$^3$}
   \address{\rm 3. Government College of Engineering, Pune-411005, INDIA}
 \email{kvd.maths@coep.ac.in}
 \maketitle{}
\begin{abstract}  Zaslavsky introduced the concept of lifted-graphic matroid. For binary matroids, a binary elementary lift can be defined in terms of the splitting operation.  In this paper,  we give a method to get a forbidden-minor characterization for the class of graphic matroids whose all lifted-graphic matroids are also graphic using the splitting operation.  
\end{abstract}
  \vskip.2cm\noindent
{\bf Keywords:} Elementary lifts; splitting;  binary matroids; minors; graphic; lifted-graphic
\vskip.2cm\noindent
{\bf Mathematics Subject Classification:} 05B35; 05C50; 05C83
\section{Introduction} 
For undefined notions and terminology, we refer to Oxley \cite{oxley2006matroid}. A matroid $M$ is \textit{quotient} of a matroid $N$ if there is a matroid $Q$ such that, for some $X\subset E(Q)$, $N=Q\backslash X$ and $M=Q/X$.  If $|X|=1$, then $M$ is \textit{an elementary quotient} of $N$. A matroid $N$ is a \textit{lift} of $M$ if $M$ is a quotient of $N$. If $M$ is an elementary quotient of $N$, then $N$ is \textit{an elementary lift} of $M$.  A matroid $N$ is a lifted-graphic matroid if there is a matroid $Q$ 
with $E(Q)=E(N) \cup e$ such that $ Q\backslash e=N$ and $Q/e$ is graphic.

The concept of lifted-graphic matroid was introduced by Zaslavsky \cite{zas}.  Lifted-graphic matroids play an important role in the matroid minors project of Geelen, Gerards and Whittle \cite{j1, j2}. Lifted-graphic matroids are  studied in \cite{cw, cg,  dfd, fd, zas}. 
 This class is minor-closed.  In \cite{cg}, it is proved that there exist infinitely many pairwise non-isomorphic excluded minors for the class of lifted-graphic matroids.   Frank and Mayhew \cite{fd} proved that, for a positive integer $r$,  there are only a finite number of excluded minors of rank $r$ for the class of lifted-graphic matroids. 
   Very less is known about the forbidden-minor characterizations of lifts. Here, we consider only \textit{binary matroids}. We study a quotient of lifted-graphic matroids and also give a way to represent a lifted-graphic binary matroid over $GF(2)$ using its quotient.  We relate an elementary binary lift of a binary matroid with the splitting operation for binary matroids.  Using the splitting operation, we obtain a forbidden-minor characterization for the class of graphic matroids whose all elementary lifts are also graphic.

Fleischner \cite{fleischner1990eulerian} introduced the splitting operation with respect to a pair of edges of a graph and  he characterized  Eulerian graphs and gave an algorithm to find all Eulerian trails in an Eulerian graph using this operation.  Raghunathan et al. \cite{raghunathan1998splitting}  extended this operation to binary matroids.  They defined the splitting operation for binary matroids   with respect to a pair of elements and used it to characterize the binary Eulerian matroids. Later on, Shikare et al. \cite{shikare2011generalized} generalized this operation by defining  for a binary matroid  with respect to a general set instead of a pair as follows. 

\begin{definition}\label{c11} \cite{shikare2011generalized} Let $M$ be a binary matroid with standard matrix representation $A$ over the field $GF(2)$ and let $T$ be a subset of $E(M)$.  Let $A_T$ be the matrix  obtained by adjoining one extra row to the matrix $A$ whose entries are 1 in the columns labeled by the elements of the set $T$ and zero otherwise.  The vector matroid of the matrix $A_T,$ denoted by $M_T,$  is called as the splitting matroid of  $M$ with respect to $T,$ and the  transition from $M$ to $M_{T}$ is called as the {\it splitting operation} with respect to $T.$  
\end{definition}

Let $M$ be a binary matroid. From the above definition, it is clear that  the splitting matroid $M_T$ of $M$ is binary.  In the second section, we prove that a splitting matroid of  $M$  is an elementary lift of $M$ and conversely, every binary elementary lift of $M$ is a splitting matroid of $M.$  Therefore an elementary binary lift of a given binary matroid can be obtained by the splitting operation. Hence we use the term  a \textit{splitting matroid} of $M$ for a binary elementary lift of $M$.

Bases, cocircuits and connectivity of splitting matroids are studied in  \cite{bm2, mills,  raghunathan1998splitting, shikareazadi}.

In general, the splitting operation does not preserve the   graphicness and cographicness properties of a given matroid (see  \cite{borse2014excluded, shikare2010excluded}).  Shikare and Waphare \cite{shikare2010excluded}  characterized the graphic matroids $M$ such that $M_T$ is graphic for any $T \subseteq E(M)$ with $|T|=2.$  They proved that there are four forbidden minors for this class. However, the authors in \cite{bm1} observed that one of these four minors is redundant and they restated the result of Shikare and Waphare \cite{shikare2010excluded}    as follows.

\begin{theorem} \cite{bm1} \label{c2bm} Let $M$ be a graphic matroid.  Then $M_T$ is graphic for any $T \subseteq E(M)$ with $|T|=2$ if and only if $M$ has no minor isomorphic to any of the circuit matroids $M(G_1)$, $M(G_2)$ and $M(G_3),$  where $G_1$, $G_2$ and $ G_3$ are the graphs as shown in Figure 1.
\end{theorem}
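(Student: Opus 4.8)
The plan is to combine Tutte's excluded-minor characterization of graphic matroids (see \cite{oxley2006matroid}) with two elementary facts about the splitting operation. The first is a commutation lemma: for $e\in E(M)\setminus T$ one has $M_T\backslash e=(M\backslash e)_T$ and $M_T/e=(M/e)_T$. Both are read directly off Definition \ref{c11}: deleting the column of $e$, or pivoting on one of the original rows to contract $e$, leaves the appended indicator row of $T$ untouched (and the loop cases are trivial). Consequently, if $N=M/C\backslash D$ is a minor of $M$ with $(C\cup D)\cap T=\emptyset$, then $N_T=(M_T)/C\backslash D$ is a minor of $M_T$. The second fact is that $M_T$ depends on $T$ only modulo the cocycle space of $M$: if $T\triangle T'$ is a disjoint union of cocircuits of $M$, then $M_T=M_{T'}$, since adjoining $v_T$ or $v_{T'}$ to a fixed representation of $M$ produces matrices with the same row space.

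First the direction that is straightforward: if $M$ is graphic and has a minor isomorphic to $M(G_i)$ for some $i\in\{1,2,3\}$, then some $2$-element splitting of $M$ is non-graphic. Indeed, it is enough to produce, for each $i$, a pair $T_i\subseteq E(M(G_i))$ with $M(G_i)_{T_i}$ non-graphic; then, regarding $T_i$ as a subset of $E(M)$, the commutation lemma exhibits $M(G_i)_{T_i}$ as a (non-graphic) minor of $M_{T_i}$, so $M_{T_i}$ is non-graphic. Each of the three required computations is finite and explicit: write a standard $GF(2)$-representation of $M(G_i)$, adjoin the indicator row of an appropriate $T_i$, and locate in the resulting $7$- or $8$-element binary matroid a minor isomorphic to $F_7^{*}$ or to $M^{*}(K_{3,3})$, whence it is non-graphic by Tutte's theorem.

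For the converse, suppose the statement fails and let $M$ be a counterexample with $|E(M)|$ minimum: $M$ is graphic (hence binary), $M$ has no minor isomorphic to $M(G_1)$, $M(G_2)$ or $M(G_3)$, and $M_T$ is non-graphic for some $T$ with $|T|=2$. By Definition \ref{c11} the matroid $M_T$ is binary, so Tutte's characterization provides a minor $N$ of $M_T$ isomorphic to one of $F_7$, $F_7^{*}$, $M^{*}(K_5)$ or $M^{*}(K_{3,3})$ ($U_{2,4}$ is ruled out by binarity). Write $N=M_T/C\backslash D$ and examine how $C\cup D$ meets $T$. If $(C\cup D)\cap T=\emptyset$, the commutation lemma gives $N=(M/C\backslash D)_T$, so the graphic matroid $M/C\backslash D$ has no forbidden minor but has a non-graphic $2$-element splitting; by minimality $C\cup D=\emptyset$, that is, $M_T\cong N$. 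If some $e\in T$ lies in $D$, then $M_T\backslash e$ equals $M\backslash e$ with the other element $f$ of $T$ turned into a coloop, so $N$ would either retain $f$ as a coloop (impossible, since none of $F_7$, $F_7^{*}$, $M^{*}(K_5)$, $M^{*}(K_{3,3})$ has a coloop) or be a minor of the graphic matroid $M\backslash\{e,f\}$ (impossible, since $N$ is non-graphic); so this case does not arise. Finally, if some $e\in T$ is contracted (and, by the previous case, the other element $f$ of $T$ survives), then working with a representation of $M$ relative to a basis containing $e$ — the case of a loop $e$ being immediate — one finds $M_T/e=(M/e)_{T''}$, where $T''=\{f\}\triangle(C^{*}\setminus e)$ and $C^{*}$ may be taken to be any cocircuit of $M$ containing $e$; reducing $T''$ modulo the cocycle space of $M/e$ by the second fact above, one may take $|T''|$ equal to the size of a suitably chosen cocircuit through $e$, so that when this size is at most $2$ the matroid $M/e$ is a smaller counterexample, contradicting minimality, while the finitely many residual configurations (in which $e$ meets only large cocircuits) are eliminated by a direct analysis of $M$ as a graph. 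In every case $M_T\cong N$, one of the four forbidden matroids.

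It remains to carry out a finite search. For each $N\in\{F_7,F_7^{*},M^{*}(K_5),M^{*}(K_{3,3})\}$ there are only finitely many binary matroids $M$ on $|E(N)|\le 10$ elements possessing a $2$-element set $T$ with $M_T\cong N$; moreover, if $v_T$ lay in the row space of a representation of $M$ we would have $M_T=M$, hence $M\cong N$, contradicting that $M$ is graphic, so $M$ must have rank $r(N)-1$. For $N=F_7$ a short counting argument then shows that no such $M$ exists at all (a rank-$2$ matroid on $7$ elements cannot become simple after a $2$-element splitting), and for the remaining values of $N$ the graphic candidates $M$ are checked one at a time; each turns out to contain a minor isomorphic to $M(G_1)$, $M(G_2)$ or $M(G_3)$. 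This is the step that isolates the three graphs and reveals the fourth excluded minor of \cite{shikare2010excluded} to be redundant, as noted in \cite{bm1}. The resulting contradiction completes the proof. I expect the contraction case above — ensuring that contracting an element of $T$ inside $M_T$ does not produce a non-graphic minor lying outside the scope of the induction, which is exactly where the reduction modulo the cocycle space and the edge-cut analysis enter — together with the concluding case check of the matroids that un-split to each forbidden minor, to be the main obstacle.
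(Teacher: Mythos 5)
First, a point of comparison: the paper does not prove Theorem \ref{c2bm} at all — it is quoted from \cite{bm1} (restating \cite{shikare2010excluded}) and used as an ingredient. The closest in-paper analogue of what you are attempting is the proof of the three-element Theorem \ref{c2grspl3} in Sections 2--5, so I judge your argument against that template. Your skeleton is reasonable: the commutation lemma, the cocycle-space invariance of $M_T$, the easy direction, the deletion case, and the rank count ruling out $M_T\cong F_7$ are all correct.

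The genuine gap is the contraction case $e\in T\cap C$. Your identity $M_T/e=(M/e)_{T''}$ with $T''=\{f\}\triangle(C^*\setminus e)$ is right, but you then (a) assert that the configurations with $|T''|>2$ are ``finitely many'' and ``eliminated by a direct analysis of $M$ as a graph'', and (b) conclude that ``in every case $M_T\cong N$''. Neither claim is substantiated, and (b) is in fact false: the situation $M_T/T'\cong F$ with $T'\neq\emptyset$ genuinely occurs and is precisely where some excluded minors originate — compare condition (ii) of Lemma \ref{c2minle}, whose proof explicitly reduces the three-element problem to two-element splittings of the form $P_X/T_2'\cong F$ with $T_2'\neq\emptyset$, and the computation $M(G_9)_{T_9}/z\cong M^*(K_{3,3})$ in Proposition \ref{c2exten2}. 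There is also no a priori bound on the number of graphs in which every cocircuit through $e$ is large, so the ``residual configurations'' need not be finite. The machinery needed to close exactly this case — passing to the coextension $M'_T$ of Definition \ref{c2esd} so that $N=M'_T/T'$ satisfies $N\backslash a\cong F$ and $N/a=M/T'$, classifying the graphic elementary quotients $N/a$ of each $F\in\mathcal{F}$ as in Lemmas \ref{c2mff*7}--\ref{c2mfm*5}, and then reconstructing $M$ as a bounded coextension of $N/a$ — is absent from your proposal; consequently your terminal finite search, which only examines matroids with $M_T\cong N$, does not cover all counterexamples, and the proof as written does not go through.
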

\begin{center}
	\unitlength .7mm 
	\linethickness{0.4pt}
	\ifx\plotpoint\undefined\newsavebox{\plotpoint}\fi 
	\begin{picture}(68.145,44.87)(0,0)
	\put(.81,19.37){\circle*{1.33}}
	\put(15.81,19.37){\circle*{1.33}}
	\put(15.81,32.37){\circle*{1.33}}
	\put(25.81,32.37){\circle*{1.33}}
	\put(25.81,19.37){\circle*{1.33}}
	\put(41.81,19.37){\circle*{1.33}}
	\put(41.81,32.37){\circle*{1.33}}
	\put(33.81,43.37){\circle*{1.33}}
	\put(51.14,32.37){\circle*{1.33}}
	\put(67.48,32.37){\circle*{1.33}}
	\put(67.48,19.37){\circle*{1.33}}
	\put(51.14,19.37){\circle*{1.33}}
	\put(59.14,43.37){\circle*{1.33}}
	\put(15.81,32.37){\line(0,-1){13}}
	\put(15.81,19.37){\line(-1,0){15}}
	\put(25.81,32.37){\line(1,0){16}}
	\put(41.81,32.37){\line(0,-1){13}}
	\put(41.81,19.37){\line(-1,0){16}}
	\put(25.81,19.37){\line(0,1){13}}
	\put(25.81,32.37){\line(3,4){8.33}}
	\put(33.81,43.37){\line(3,-4){8.33}}
	\put(51.14,32.37){\line(1,0){16.33}}
	\put(67.48,32.37){\line(0,-1){13}}
	\put(67.48,19.37){\line(-1,0){16.33}}
	\put(51.14,19.37){\line(0,1){13}}
	\put(51.14,32.37){\line(3,4){8.33}}
	\put(59.14,43.37){\line(3,-4){8.33}}
	\put(67.48,32.37){\line(-5,-4){16.33}}
	\put(67.48,19.37){\line(-5,4){16.33}}
	\put(59.14,43.37){\line(-1,-3){8}}
	\put(59.14,43.37){\line(1,-3){8}}
	\put(33.81,43.37){\line(-1,-3){8}}
	\qbezier(.81,19.37)(1.31,20)(.81,19.12)
	\qbezier(.81,19.12)(.69,20.12)(1.06,19.12)
	\qbezier(1.06,19.12)(.19,20.37)(.81,19.62)
	\put(.31,31.87){\circle*{1.12}}
	\put(.31,32.37){\line(1,0){16}}
	\multiput(16.31,32.37)(-.125,.041667){6}{\line(-1,0){.125}}
	\put(15.56,32.62){\line(1,0){.25}}
	\multiput(15.56,19.62)(-.058490566,.0481132075){265}{\line(-1,0){.058490566}}
	\put(.31,32.37){\line(0,-1){12.75}}
	\qbezier(.56,31.87)(8.06,44.87)(15.56,32.87)
	\multiput(15.56,32.62)(-.0555555556,-.0481481481){270}{\line(-1,0){.0555555556}}
	\qbezier(.56,19.62)(7.06,7.25)(15.56,19.37)
	\qbezier(25.81,19.62)(32.31,7.62)(41.81,19.62)
	\put(7.31,15.05){\makebox(0,0)[cc]{}}
	\put(32.81,15.55){\makebox(0,0)[cc]{}}
	\put(43.31,24.12){\makebox(0,0)[cc]{}}
	\put(63.56,39.37){\makebox(0,0)[cc]{}}
	\put(55.31,40.37){\makebox(0,0)[cc]{}}
	\put(56.06,36.12){\makebox(0,0)[cc]{}}
	\put(60.56,36.12){\makebox(0,0)[cc]{}}
	\put(7.75,40.12){\makebox(0,0)[cc]{}}
	\put(33.75,.75){\makebox(0,0)[cc]{Figure 1}}
	\put(9.25,8.25){\makebox(0,0)[cc]{$G_1$}}
	\put(35,8.25){\makebox(0,0)[cc]{$G_2$}}
	\put(59.75,8.25){\makebox(0,0)[cc]{$G_3$}}
	\put(8,41.2){\makebox(0,0)[cc]{}}
	\put(8,16){\makebox(0,0)[cc]{}}
	\put(53.5,38.75){\makebox(0,0)[cc]{}}
	\put(59.75,16){\makebox(0,0)[cc]{}}
	\end{picture}
	\end{center}

In this paper, we give a method to obtain forbidden-minor characterization for the class of graphic matroids whose all binary elementary lifts are graphic. We prove  every binary elementary lift of a binary matroid $M$  is isomorphic to a splitting matroid $M_T$ for some $T \subseteq E(M)$.  First, we characterize the class of graphic matroids which yield graphic matroids under the splitting operation with respect to a \textit{set of three elements}.  Then we generalize the results for  set of any size.

 Given a graph $G$, let  $\tilde{G}$ be a member of the collection of graphs obtained by adding an edge $e$ to $G$  such that at least one end vertex of $e$ belongs to the vertex set $V(G)$.

 The following is the main result of the paper. 
\begin{theorem} \label{c2grspl3}
	Let $M$ be a graphic matroid. Then the splitting matroid $M_T$ is graphic for any $T\subseteq E(M)$ with $|T|=3$ if and only if  $M$  does not contain a minor isomorphic to any of the circuit matroids  $M(\tilde{G_4}),  M(\tilde{G_5}) $ and $M(G_k)$ for $k=6,7,8,9,$ where  $G_4, G_5, \dots, G_{9}$ are the graphs as shown in Figure 2.
\end{theorem}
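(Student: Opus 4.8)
The plan is to prove both implications of Theorem~\ref{c2grspl3} by combining three ingredients: the identification of splitting matroids with binary elementary lifts established in Section~2, Tutte's excluded‑minor characterization of graphic matroids among binary matroids (a binary matroid is graphic iff it has no minor isomorphic to $F_7$, $F_7^*$, $M^*(K_5)$ or $M^*(K_{3,3})$), and Theorem~\ref{c2bm} for the case $|T|=2$. The first technical step is a routine ``commuting'' lemma: if $D,C\subseteq E(M)$ are disjoint and $T\subseteq E(M)\setminus(D\cup C)$, then $(M_T)\backslash D/C=(M\backslash D/C)_T$; more generally, writing $M_T=Q\backslash e$ with $Q/e=M$ as in Section~2, both $(M_T)\backslash e'$ and $(M_T)/e'$ are again splitting matroids of minors of $M$. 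An immediate consequence is that the property ``$M_T$ is graphic for every $T$ with $|T|=3$'' is minor‑closed, so that a forbidden‑minor description is possible, and, once we have checked that each listed graph genuinely produces a non‑graphic $3$‑split, the ``only if'' direction follows.

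For the necessity direction ($M$ contains one of the listed minors $\Rightarrow$ some $M_T$ with $|T|=3$ is non‑graphic) it suffices, by the commuting lemma, to treat the minimal obstructions themselves. For each of $G_6,\dots,G_9$ I would exhibit an explicit $3$‑element set $T\subseteq E(G_k)$ and verify directly that $M(G_k)_T$ has an $F_7$, $F_7^*$, $M^*(K_5)$ or $M^*(K_{3,3})$ minor — equivalently, perform the corresponding splitting of the graph $G_k$ along the three edges of $T$ and recognize the resulting matroid as non‑graphic. For the families $\tilde{G_4}$ and $\tilde{G_5}$ the clean argument is: pick a $2$‑element set $T_0\subseteq E(G_4)$ with $M(G_4)_{T_0}$ non‑graphic — this is where Theorem~\ref{c2bm} enters — let $e$ be the extra edge of $\tilde{G_4}$, and put $T=T_0\cup\{e\}$; a short computation with the matrix $A_T$ shows that $M(\tilde{G_4})_T$ has $M(G_4)_{T_0}$ as a minor (contract $e$), hence is non‑graphic, and likewise for $\tilde{G_5}$. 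Minor‑closedness then upgrades these to arbitrary $M$ containing such a minor.

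The substance of the theorem is the sufficiency direction, which I would prove in contrapositive form: assume $M=M(G)$ is graphic and $M_T$ is non‑graphic for some $T$ with $|T|=3$, and produce one of the listed minors inside $M$. Since $M_T$ is binary (Definition~\ref{c11}) it has a minor $N$ isomorphic to $F_7$, $F_7^*$, $M^*(K_5)$ or $M^*(K_{3,3})$, hence $|E(N)|\le 10$. Writing $M_T=Q\backslash e$ and pushing the deletions and contractions that produce $N$ down through $Q$, one obtains a minor $M'$ of $M$ on a bounded number of elements together with a set $T'\subseteq E(M')$ with $|T'|\le 3$ such that $M'_{T'}$ is non‑graphic; the one delicate point is that whenever the passage to $N$ would contract an element of $T$, one instead retains it (as a coloop, a parallel edge, or a chord), which is precisely why the obstructions come packaged as the one‑edge‑larger families $\tilde{G_4}$, $\tilde{G_5}$ rather than as $G_4$, $G_5$ themselves. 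With $M'$ now of bounded size, a finite analysis — best organized by first reducing to the $3$‑connected case, decomposing the non‑$3$‑connected case along $1$‑ and $2$‑separations, and reusing Theorem~\ref{c2bm} for the pieces that split on two elements — identifies the minor‑minimal graphic matroids admitting a non‑graphic $3$‑element split as precisely $M(\tilde{G_4})$, $M(\tilde{G_5})$, $M(G_6)$, $M(G_7)$, $M(G_8)$ and $M(G_9)$; consequently $M$ contains one of them as a minor.

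The main obstacle, as usual for this type of result, is the completeness of the list in the sufficiency direction. Two things have to be controlled carefully: the pull‑back of the excluded minor $N$ of $M_T$ must be shown to live in a genuinely bounded‑size minor of $M$, so that only finitely many graphs need be examined, and the interaction of contraction with the row adjoined in forming $A_T$ must be tracked so that no obstruction is missed and the $\tilde{}$‑families correctly absorb the elements of $T$ that get contracted. After that, the verification that no graphic matroid strictly below the six listed ones has a bad $3$‑split is lengthy but essentially mechanical; I expect the connectivity reduction together with Theorem~\ref{c2bm} to be what keeps that check within reasonable bounds.
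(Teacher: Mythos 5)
Your treatment of the necessity direction is essentially the paper's: explicit $3$-element splits of $G_6,\dots,G_9$ producing members of $\{F_7,F_7^*,M^*(K_5),M^*(K_{3,3})\}$, the sets $T=T_0\cup\{e\}$ for the families $\tilde{G_4},\tilde{G_5}$ via Theorem~\ref{c2bm}, and minor-closedness via the commuting identities of Lemma~\ref{c2dcm}. One small slip: to recover $M(G_4)_{T_0}$ from $M(\tilde{G_4})_T$ you should \emph{delete} the extra edge $e$, not contract it; it is deletion of an element of $T$ that commutes with splitting in the required way ($M_T\backslash e=(M\backslash e)_{T-\{e\}}$), whereas contraction of an element of $T$ does not.

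The sufficiency direction, however, contains a genuine gap: the entire completeness claim is asserted rather than proved. You correctly reduce to a bounded-size minor $M'$ with $|T'|\le 3$ such that $M'_{T'}$ (or a contraction of it by a subset of $T'$) is one of the four non-graphic excluded minors, but you then invoke ``a finite analysis --- best organized by reducing to the $3$-connected case and decomposing along $1$- and $2$-separations'' without carrying it out or explaining why it terminates in a manageable list. A brute-force check over all graphs on up to thirteen edges is not feasible, and the proposed connectivity decomposition is not developed enough to see that it replaces one. The paper's key idea, which is absent from your proposal, is to exploit the lift/quotient duality in the other direction: a minimal obstruction $M'$ is, up to a coextension by at most three elements, an elementary \emph{graphic quotient} $N/a$ of some $F\in\mathcal{F}$ with $N\backslash a\cong F$. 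The $3$-connectedness and (where relevant) Eulerian/bipartite character of $F$ then force severe structural constraints on the graph realizing $N/a$ (no two loops, no three mutually parallel edges, no $2$-edge cut, prescribed vertex and edge counts), and a short enumeration yields exactly the graphs $G_{10}$--$G_{14}$ of Figure~3. Only after that classification does one examine the few admissible coextensions (the $F_7$ and $M^*(K_{3,3})$ cases require a further argument ruling out $G_{17}$ and $G_{20}$ by parity of circuits and cocircuits under splitting). Without this quotient classification, or an equally concrete substitute, your sufficiency argument is a plan rather than a proof.
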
 

\begin{center} 
\unitlength .7mm 
\linethickness{0.4pt}
\ifx\plotpoint\undefined\newsavebox{\plotpoint}\fi 
\begin{picture}(160.125,42.875)(0,0)
\put(107.81,17.12){\circle*{1.33}}
\put(122.81,17.12){\circle*{1.33}}
\put(122.81,33.12){\circle*{1.33}}
\put(107.81,33.12){\circle*{1.33}}
\put(133.31,25.37){\circle*{1.33}}
\put(95.56,25.62){\circle*{1.33}}
\put(122.81,17.12){\line(-1,0){15}}
\qbezier(107.81,17.12)(108.31,17.75)(107.81,16.87)
\qbezier(107.81,16.87)(107.69,17.87)(108.06,16.87)
\qbezier(108.06,16.87)(107.19,18.12)(107.81,17.37)
\put(80,-1){\makebox(0,0)[cc]{Figure 2}}
\put(115.75,6.25){\makebox(0,0)[cc]{$G_{8}$}}
\multiput(95.5,25.75)(.081125828,.048013245){151}{\line(1,0){.081125828}}
\multiput(95.25,25.5)(.06920904,-.048022599){177}{\line(1,0){.06920904}}
\put(107.75,33.25){\line(1,0){14.75}}
\multiput(122.5,33.25)(.064759036,-.048192771){166}{\line(1,0){.064759036}}
\multiput(133.25,25.5)(-.060734463,-.048022599){177}{\line(-1,0){.060734463}}
\multiput(107.75,33)(.0480769231,-.0512820513){312}{\line(0,-1){.0512820513}}
\put(75.31,17.37){\circle*{1.33}}
\put(42.81,17.37){\circle*{1.33}}
\put(90.31,17.37){\circle*{1.33}}
\put(57.81,17.37){\circle*{1.33}}
\put(90.31,30.37){\circle*{1.33}}
\put(57.81,30.37){\circle*{1.33}}
\put(90.31,30.37){\line(0,-1){13}}
\put(57.81,30.37){\line(0,-1){13}}
\put(90.31,17.37){\line(-1,0){15}}
\put(57.81,17.37){\line(-1,0){15}}
\qbezier(75.31,17.37)(75.81,18)(75.31,17.12)
\qbezier(42.81,17.37)(43.31,18)(42.81,17.12)
\qbezier(75.31,17.12)(75.19,18.12)(75.56,17.12)
\qbezier(42.81,17.12)(42.69,18.12)(43.06,17.12)
\qbezier(75.56,17.12)(74.69,18.37)(75.31,17.62)
\qbezier(43.06,17.12)(42.19,18.37)(42.81,17.62)
\put(74.81,29.87){\circle*{1.33}}
\put(42.31,29.87){\circle*{1.33}}
\put(74.81,30.37){\line(1,0){16}}
\put(42.31,30.37){\line(1,0){16}}
\multiput(90.81,30.37)(-.125,.041667){6}{\line(-1,0){.125}}
\multiput(58.31,30.37)(-.125,.041667){6}{\line(-1,0){.125}}
\put(90.06,30.62){\line(1,0){.25}}
\put(57.56,30.62){\line(1,0){.25}}
\multiput(57.56,17.62)(-.058490566,.0481132075){265}{\line(-1,0){.058490566}}
\put(74.81,30.37){\line(0,-1){12.75}}
\put(42.31,30.37){\line(0,-1){12.75}}
\qbezier(75.06,29.87)(82.56,42.87)(90.06,30.87)
\multiput(57.56,30.62)(-.0555555556,-.0481481481){270}{\line(-1,0){.0555555556}}
\qbezier(75.06,17.62)(81.56,5.25)(90.06,17.37)
\put(83.75,6){\makebox(0,0)[cc]{$G_{7}$}}
\put(51.25,6){\makebox(0,0)[cc]{$G_{6}$}}
\bezier{579}(74.75,30.25)(64,23)(75.25,17.75)
\put(82.75,39){\makebox(0,0)[cc]{$x$}}
\put(71.25,23.5){\makebox(0,0)[cc]{$y$}}
\put(82,13){\makebox(0,0)[cc]{$z$}}
\put(49.5,14.25){\makebox(0,0)[cc]{$y$}}
\bezier{501}(57.5,30.75)(60.5,42.875)(67.5,36.5)
\bezier{493}(67.5,36.5)(70.875,28.875)(57.75,30.75)
\put(62,41.25){\makebox(0,0)[cc]{$z$}}
\put(50,32.75){\makebox(0,0)[cc]{$x$}}
\put(21.915,17.745){\circle*{1.33}}
\put(36.915,17.745){\circle*{1.33}}
\put(36.915,30.745){\circle*{1.33}}
\put(36.915,30.745){\line(0,-1){13}}
\put(36.915,17.745){\line(-1,0){15}}
\qbezier(21.915,17.745)(22.415,18.375)(21.915,17.495)
\qbezier(21.915,17.495)(21.795,18.495)(22.165,17.495)
\qbezier(22.165,17.495)(21.295,18.745)(21.915,17.995)
\put(21.415,30.245){\circle*{1.33}}
\put(21.415,30.745){\line(1,0){16}}
\multiput(37.415,30.745)(-.125,.041667){6}{\line(-1,0){.125}}
\put(36.665,30.995){\line(1,0){.25}}
\multiput(36.665,17.995)(-.058490566,.0481132075){265}{\line(-1,0){.058490566}}
\put(21.415,30.745){\line(0,-1){12.75}}
\multiput(36.665,30.995)(-.0555555556,-.0481481481){270}{\line(-1,0){.0555555556}}
\put(30.355,5.5){\makebox(0,0)[cc]{$G_{5}$}}
\qbezier(21.605,17.625)(29.98,6.5)(36.855,17.875)
\qbezier(21.355,30.125)(28.73,42.75)(36.605,30.875)
\put(139.765,30.87){\circle*{1.33}}
\put(-.235,30.62){\circle*{1.33}}
\put(156.105,30.87){\circle*{1.33}}
\put(16.105,30.62){\circle*{1.33}}
\put(156.105,17.87){\circle*{1.33}}
\put(16.105,17.62){\circle*{1.33}}
\put(139.765,17.87){\circle*{1.33}}
\put(-.235,17.62){\circle*{1.33}}
\put(147.765,41.87){\circle*{1.33}}
\put(7.765,41.62){\circle*{1.33}}
\put(139.765,30.87){\line(1,0){16.33}}
\put(-.235,30.62){\line(1,0){16.33}}
\put(156.105,30.87){\line(0,-1){13}}
\put(16.105,30.62){\line(0,-1){13}}
\put(156.105,17.87){\line(-1,0){16.33}}
\put(16.105,17.62){\line(-1,0){16.33}}
\put(139.765,17.87){\line(0,1){13}}
\put(-.235,17.62){\line(0,1){13}}
\put(139.765,30.87){\line(3,4){8.33}}
\put(-.235,30.62){\line(3,4){8.33}}
\put(147.765,41.87){\line(3,-4){8.33}}
\put(7.765,41.62){\line(3,-4){8.33}}
\put(147.765,41.87){\line(-1,-3){8}}
\put(7.765,41.62){\line(-1,-3){8}}
\put(147.765,41.87){\line(1,-3){8}}
\put(7.765,41.62){\line(1,-3){8}}
\put(147.375,6.25){\makebox(0,0)[cc]{$G_{9}$}}
\put(7.375,5.75){\makebox(0,0)[cc]{$G_{4}$}}
\qbezier(147.875,41.75)(160.125,40.25)(156.375,30.75)
\qbezier(147.875,41.75)(136.5,40.625)(139.625,31)
\put(122.5,33){\line(0,-1){16}}
\multiput(107.75,16.75)(.142655367,.048022599){177}{\line(1,0){.142655367}}
\multiput(95.5,26)(.148351648,-.048076923){182}{\line(1,0){.148351648}}
\put(109.5,28){\makebox(0,0)[cc]{$z$}}
\put(98.25,20.5){\makebox(0,0)[cc]{$y$}}
\put(129.5,19.25){\makebox(0,0)[cc]{$x$}}
\multiput(-.25,17.75)(.0613207547,.0481132075){265}{\line(1,0){.0613207547}}
\multiput(-.5,30.75)(.0590909091,-.0481818182){275}{\line(1,0){.0590909091}}
\put(2,36.75){\makebox(0,0)[cc]{}}
\put(7.75,14.5){\makebox(0,0)[cc]{}}
\put(29,39.25){\makebox(0,0)[cc]{}}
\put(29.5,14){\makebox(0,0)[cc]{}}
\put(137.5,39.75){\makebox(0,0)[cc]{$x$}}
\put(158.25,39.75){\makebox(0,0)[cc]{$y$}}
\put(147.75,15){\makebox(0,0)[cc]{$z$}}
\end{picture}

\end{center}

\vskip.4cm

The proof of the above theorem is  long and it is divided into four sections. In Section 2,  we prove the \textquotedblleft if part\textquotedblright of the theorem. The proof of remaining part is given in the next three sections.  In the last section, we discuss the extension of Theorem \ref{c2grspl3} for general $T$. 

\section{Graphic Splitting with respect to Three Elements}

The following result relates a splitting matroid and an elementary lift. 
\begin{lemma}\label{liftth}
	Let $M$ and $N$ be  binary matroids.  Then $N$ is an elementary lift of $M$ if and only if $N$ is isomorphic to $M_T$ for some $T \subseteq E(M)$.  
\end{lemma}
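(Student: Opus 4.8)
The plan is to prove both implications by working at the level of $GF(2)$-matrices and putting the relevant representations into a normal form; the key external fact will be that a binary matroid is uniquely representable over $GF(2)$, so any two of its $GF(2)$-representations differ only by elementary row operations (and, possibly, zero rows). Since every matroid in sight is binary, in the definition of an elementary lift I will take the witnessing matroid $Q$ to be binary.

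For the \textquotedblleft if\textquotedblright\ part I would start from a standard representation $A=[\,I_r\mid D\,]$ of $M$ over $GF(2)$, $r=r(M)$, so that $A_T=\bigl(\begin{smallmatrix}A\\ z_T\end{smallmatrix}\bigr)$ represents $M_T$, where $z_T$ is the $0/1$ row with support $T$. Adjoining a new column, labelled by an element $e$ and equal to the unit vector $(\mathbf 0,1)^{\mathrm T}$, produces $A'=\bigl(\begin{smallmatrix}A&\mathbf 0\\ z_T&1\end{smallmatrix}\bigr)$; let $Q$ be its vector matroid. Then $Q\backslash e$ is the vector matroid of $A_T$, namely $M_T$, and---since the column of $e$ is a unit vector---$Q/e$ is obtained by deleting that column together with the last row, which returns the vector matroid of $A$, namely $M$. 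Hence $M_T=Q\backslash e$ and $M=Q/e$, so $M_T$, and therefore any $N\cong M_T$, is an elementary lift of $M$.

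For the \textquotedblleft only if\textquotedblright\ part, suppose $N$ is an elementary lift of $M$ via a binary $Q$ on $E(N)\cup\{e\}$ with $Q\backslash e=N$, $Q/e=M$ and $E(N)=E(M)$. If $e$ is a loop or coloop of $Q$ then $Q\backslash e=Q/e$, so $N=M$ and $T=\emptyset$ works (adjoining a zero row does not change the vector matroid). Otherwise $r(Q)=r(N)=r(M)+1$; I would pick a full-row-rank $GF(2)$-representation of $Q$ and row-reduce so that the column of $e$ is the last unit vector. The block on the columns of $E(N)$ then has the form $\bigl(\begin{smallmatrix}B_0\\ z\end{smallmatrix}\bigr)$ with $B_0$ an $r(M)\times|E(M)|$ matrix; it represents $Q\backslash e=N$, while $B_0$ (delete the $e$-column and the pivot row) represents $Q/e=M$. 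Now invoke unique $GF(2)$-representability to get an invertible $P$ with $B_0=PA$ for a standard representation $A=[\,I_r\mid D\,]$ of $M$; since
\[
\begin{pmatrix}B_0\\ z\end{pmatrix}=\begin{pmatrix}P&\mathbf 0\\ \mathbf 0&1\end{pmatrix}\begin{pmatrix}A\\ z\end{pmatrix},
\]
both sides have the same vector matroid, so $N$ is the vector matroid of $\bigl(\begin{smallmatrix}A\\ z\end{smallmatrix}\bigr)=A_T$ with $T:=\{\,i\in E(M):z_i=1\,\}$; that is, $N\cong M_T$.

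I expect the matrix bookkeeping to be routine; the delicate points, and the main obstacle, lie in the \textquotedblleft only if\textquotedblright\ direction. One is the reduction to a \emph{binary} witness $Q$: this genuinely uses the binary hypothesis, since a non-binary $Q$ can realize elementary quotients of a binary matroid that no splitting produces. The other is normalizing the representation of $Q$ so that, after deleting the pivot row, the surviving row $z$ is a $0/1$ vector realizing \emph{exactly} the row-adjunction in Definition \ref{c11} --- and it is precisely here that unique $GF(2)$-representability is essential, the analogous statement over larger fields being false.
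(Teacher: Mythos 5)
Your proposal is correct and takes essentially the same approach as the paper: both directions are proved by normalizing $GF(2)$-representations so that the column of the lift element is a unit vector and the remaining extra row is the $0/1$ vector recording $T$. The only differences are cosmetic --- where you invoke unique $GF(2)$-representability to align the top block with a standard representation of $M$, the paper instead normalizes the last row to be the incidence vector of a cocircuit of $Q'$ containing $a$; and you explicitly dispose of the loop/coloop case (giving $T=\emptyset$), which the paper passes over.
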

\begin{proof} 
Let $A$ be the standard matrix representation of a binary matroid $M$ and let $T\subseteq E(M)$.  
Let $B$ be the matrix obtained by adding a row to $A$ with entries zero everywhere except in the columns labeled by $T$ where it takes value $1$.  Then $B$ represents the matroid $M_T.$ Let $C$ be the matrix obtained by adding an extra column labeled by $a$ to $B$ containing 1 in the last row and zero otherwise. Suppose $Q$ is the vector matroid of $C$. Then $M=Q/a$ and $M_T= Q\backslash a$. Therefore the splitting matroid $M_T$ is an elementary lift of $M$. 

Conversely, suppose $N$ is a binary elementary lift of $M$. Then there exists
 a matroid $Q'$ such that $N=Q'\backslash a$ and $M=Q'/a$ for some $a \in E(Q')$. Let $T_1$ be a cocircuit of $Q'$ containing $a$.  Let $T=T_1-\{a\}$.   We can write the matrix $A$ of $Q'$  such that the column of $A$ labeled by $a$ has entry 1 in the last row and  zero elsewhere.  Since $T_1$ is  a cocircuit of $Q'$, the entries in the last row below the columns corresponding to $T_1$ are 1 and the remaining entries are zero.  Let $B$ be the matrix obtained from $A$ by deleting the last row and the column corresponding to $a$. Then $M[B]=Q'/a=M$.  Let $C$ be a matrix obtained from $A$ by deleting column corresponding the element $a$. Therefore $M[C]=Q'\backslash a=N$.   Thus, $C$ can be obtained from $B$ by adding one extra row which has entries 1 below the elements corresponding to $T$ and zero elsewhere.  Thus, by Definition \ref{c11},  $C=B_T$. Hence $N=M[C]=M[B_T]=M_T$.  
  \end{proof}

The following observations follow trivially from the definition of the splitting operation. 
\begin{lemma} \label{c2dcm}
	Let $M$ be a binary matroid and  $T \subseteq E(M).$   Then the following holds.
	\begin{enumerate}[label=(\roman*).]
				\item If   $x \in E(M)- T,$ then  $(M_T)\backslash x =(M\backslash x)_T$ and $(M_T)/ x=(M/ x)_T.$
		\item If $ y \in T$,  then $ M_T\backslash y = (M \backslash y)_{T -\{y\}}.$
		\item $M_T\backslash T=M\backslash T.$
		\item $M_T=M$, if $T$ is a cocircuit of $M$.	
	\end{enumerate}
\end{lemma}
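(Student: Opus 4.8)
The plan is to prove all four parts by working directly with a standard matrix representation $A$ of $M$ over $GF(2)$ and the matrix $A_T$ obtained from $A$ by appending the row $v_T$ that has entry $1$ in the columns labelled by $T$ and $0$ elsewhere, exactly as in Definition \ref{c11}. The three facts I will repeatedly use are: deletion of an element corresponds to deleting the corresponding column; contraction of a non-loop element corresponds to pivoting on a nonzero entry of its column and then deleting that row together with the column; and appending to a matrix a row that lies in the row space of the other rows does not change the associated vector matroid (in particular a zero row is removable).

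For (i), assume $x\in E(M)-T$, so the appended row $v_T$ has a $0$ in column $x$. Deleting column $x$ from $A_T$ produces the matrix $A\backslash x$ with the row $v_T$ (unchanged, since $x\notin T$) appended, which by definition represents $(M\backslash x)_T$; hence $(M_T)\backslash x=(M\backslash x)_T$. For contraction, if $x$ is a loop of $M$ then its column is zero in $A$ and hence also in $A_T$, so $x$ is a loop of $M_T$ and $(M_T)/x=(M_T)\backslash x=(M\backslash x)_T=(M/x)_T$. If $x$ is not a loop, pivot on a nonzero entry of column $x$; that entry necessarily lies in one of the original rows of $A$, and since $v_T$ has a $0$ in column $x$ the pivot leaves $v_T$ untouched. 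Deleting the pivot row and column $x$ leaves a representation $A'$ of $M/x$ with $v_T$ still appended (and restricting $v_T$ away from column $x$ changes nothing because $x\notin T$); this matrix represents $(M/x)_T$, so $(M_T)/x=(M/x)_T$.

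For (ii), if $y\in T$ then deleting column $y$ from $A_T$ yields $A\backslash y$ with the appended row now supported exactly on $T-\{y\}$, i.e. the matrix $(A\backslash y)_{T-\{y\}}$, whence $M_T\backslash y=(M\backslash y)_{T-\{y\}}$. Part (iii) follows by deleting the elements of $T$ one at a time, inducting on $|T|$ via (ii), with base case $T=\emptyset$ (appending a zero row changes nothing); equivalently, one observes directly that in $A_T\backslash T$ the appended row is identically zero and hence removable, leaving $A\backslash T$, which represents $M\backslash T$. For (iv), recall that the cocircuit (cocycle) space of a binary matroid $M[A]$ is the row space of $A$, and over $GF(2)$ a cocircuit $T$ is the support of the cocycle $v_T$; so if $T$ is a cocircuit of $M$ then $v_T$ already lies in the row space of $A$, and therefore $A_T$ has exactly the same column dependencies as $A$, giving $M_T=M$.

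There is no real obstacle: each step is a one-line matrix manipulation, which is why the paper calls these observations trivial. The only points deserving a word of care are separating the loop and non-loop cases in the contraction part of (i) so that the pivoting argument is legitimate, and invoking the standard identification of the cocircuit space of a binary matroid with the row space of a representing matrix in (iv); both are routine facts from matroid representation theory (Oxley \cite{oxley2006matroid}).
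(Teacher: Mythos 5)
Your proof is correct and is exactly the direct matrix verification the paper has in mind: the paper states this lemma without proof, remarking only that the facts ``follow trivially from the definition of the splitting operation,'' and your argument is precisely that routine check from Definition \ref{c11}. The one point left implicit (in the paper as well) is that $M_T$ does not depend on the particular representation chosen, which is covered by the same observation you use in (iv), namely that the row space of any binary representation of $M$ is the cocycle space of $M$; with that, your loop/non-loop split in (i) and the row-space argument in (iv) complete a sound proof.
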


We use the following important result frequently.  
\begin{theorem}[\cite{oxley2006matroid}, p. 385] \label{c1cgm}
	A binary matroid is graphic if and only if it has no minor isomorphic to $F_7$, $F_7^*$, $M^*(K_{3,3})$ or $M^*(K_5)$. 
	\end{theorem}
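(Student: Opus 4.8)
This is Tutte's excluded-minor characterization of graphic matroids, specialized to binary matroids (over an arbitrary field one must also exclude $U_{2,4}$, but $U_{2,4}$ is not binary, so that minor is vacuous here). The plan is to prove the two implications separately, the first being routine and the second carrying all the content.

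For the ``only if'' direction, I would use that the class of graphic matroids is minor-closed --- deleting or contracting an edge of a graph deletes or contracts the corresponding element of its cycle matroid --- so it suffices to check that none of the four listed matroids is graphic. Now $F_7$ and $F_7^*$ are not regular while every graphic matroid is, so neither is graphic. For $M^*(K_5)$ I would argue dually: if it were graphic, then $M(K_5)$ would be cographic, whence $K_5$ would be planar by Whitney's criterion that a graph's cycle matroid is cographic exactly when the graph is planar --- a contradiction; the argument for $M^*(K_{3,3})$ is identical.

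The ``if'' direction is the substance: a binary matroid $M$ with no minor in $\mathcal{F} := \{F_7, F_7^*, M^*(K_{3,3}), M^*(K_5)\}$ is graphic. I would prove this by induction on $|E(M)|$, small cases being clear. If $M$ is not $3$-connected, then $M = M_1 \oplus M_2$ or $M = M_1 \oplus_2 M_2$; here $M_1$ and $M_2$ are smaller minors of $M$, hence again have no $\mathcal{F}$-minor and are binary, so by induction both are graphic, and since graphicness is preserved under direct sums and $2$-sums, $M$ is graphic. So assume $M$ is $3$-connected. Being binary with no $F_7$- or $F_7^*$-minor, $M$ is regular. By Tutte's Wheels-and-Whirls Theorem, $M$ is either a wheel $M(W_r)$ --- which is graphic, and we are done --- or, since whirls are non-binary, there is an element $e$ with $M \backslash e$ or $M / e$ $3$-connected; by the induction hypothesis that minor is graphic, say it is $M(G)$, and $M$ is a single-element extension or coextension of $M(G)$.

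Everything now reduces to an extension lemma, which I expect to be the main obstacle: \emph{a $3$-connected binary matroid $M'$ obtained from a $3$-connected graphic matroid $M(G)$ by a single-element extension or coextension, and having no $\mathcal{F}$-minor, is graphic.} To prove it I would first invoke Whitney's theorem that a $3$-connected graphic matroid is represented by an essentially unique graph, and then carry out Tutte's ``bridge'' analysis of how the new element can meet the cycles and cocircuits of $M(G)$: one shows that an extension avoiding an $F_7$-minor and avoiding a non-planar (hence $M^*(K_5)$- or $M^*(K_{3,3})$-containing) cographic minor must correspond to a genuine graph operation --- adding an edge to $G$, or splitting a vertex of $G$ --- so that $M' = M(G')$ for some graph $G'$. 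This is precisely where the two cographic excluded minors are used, and the case analysis it requires is the genuinely hard part of the theorem; the connectivity reductions and the wheels-and-whirls induction around it are routine by comparison.
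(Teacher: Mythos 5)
The paper never proves this statement: it is Tutte's excluded-minor characterization of graphic matroids, quoted from Oxley (p.\ 385) and used throughout as a black box, so there is no internal proof to compare yours against; the only question is whether your sketch would stand on its own.

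As it stands it is a plan rather than a proof. The ``only if'' direction is fine (minor-closedness plus non-graphicness of the four matroids, the latter via regularity for $F_7,F_7^*$ and Whitney's planarity criterion for $M^*(K_5),M^*(K_{3,3})$). In the ``if'' direction, the reductions you invoke are legitimate appeals to standard results: the 2-sum decomposition of a connected, non-3-connected binary matroid into proper minors, closure of graphicness under direct sums and 2-sums, the Wheels-and-Whirls Theorem (with whirls excluded by binarity), and Whitney's unique graph representation of a 3-connected graphic matroid. But after these reductions the entire content of the theorem is packed into your ``extension lemma'': that a 3-connected binary single-element extension or coextension of a 3-connected graphic matroid, having no minor in $\{F_7,F_7^*,M^*(K_{3,3}),M^*(K_5)\}$, is graphic. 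That lemma is not a quotable off-the-shelf statement in the form you use it --- it is essentially equivalent to the theorem itself, and proving it is exactly Tutte's bridge analysis (equivalently, the graft/$T$-join analysis of binary coextensions of graphic matroids). You describe what such an analysis should show (the new element must correspond to adding an edge or splitting a vertex, with the two cographic excluded minors eliminating the non-planar obstructions) but do not carry it out, and you acknowledge it is the hard part. So the proposal has a genuine gap at its only substantive step; the surrounding induction is sound scaffolding but proves nothing until that lemma is established. (A minor remark: the appeal to regularity via excluding $F_7,F_7^*$ is harmless but is never actually used in your skeleton.)
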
 

``If part" of Theorem \ref{c2grspl3} follows from the following proposition.			
\begin{proposition}\label{c2exten2}
	Let $M$ be a binary matroid containing  a minor isomorphic to  one of the circuit matroids  $M(\tilde{G_4}),  M(\tilde{G_5}) $ and $M(G_k)$ for $k=6,7,8,9$, where  $G_4, G_5, \dots, G_{9}$ are the graphs shown in Figure 2. Then there is $T \subseteq E(M)$ with $|T|=3$ such that $M_T$ is not graphic. 
\end{proposition}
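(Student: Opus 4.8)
My plan rests on three things: a reduction via Lemma \ref{c2dcm} to the listed matroids themselves, a direct matrix computation for the graphs $G_6,\dots,G_9$, and a short argument that disposes of the two families $\tilde{G_4},\tilde{G_5}$ at one stroke.

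\emph{Reduction.} Suppose $M$ has a minor $N$ isomorphic to one of the matroids in the statement, say $N=M\backslash X/Y$ with $X,Y$ disjoint. If $T\subseteq E(N)$ then $T$ is disjoint from $X\cup Y$, so repeated application of Lemma \ref{c2dcm}(i) gives $(M_T)\backslash X/Y=(M\backslash X/Y)_T=N_T$; thus $N_T$ is a minor of $M_T$. Since graphic matroids are closed under taking minors, it is enough to produce, for each matroid $N$ on the list, a set $T\subseteq E(N)$ with $|T|=3$ for which $N_T$ is non-graphic, and I will always do so by pointing to a minor of $N_T$ isomorphic to one of $F_7$, $F_7^*$, $M^*(K_{3,3})$ or $M^*(K_5)$ (Theorem \ref{c1cgm}).

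\emph{The matroids $M(G_k)$, $k=6,7,8,9$.} Take $T=\{x,y,z\}$, the three edges marked in Figure 2. Starting from a convenient standard $GF(2)$-matrix $A$ of $M(G_k)$, adjoin the extra row of Definition \ref{c11} corresponding to this $T$ and row-reduce; this exhibits $M(G_k)_T$ concretely, and a short sequence of deletions and contractions then isolates one of the four excluded minors of Theorem \ref{c1cgm} (for the wheel-like members I expect $M^*(K_5)$ or $M^*(K_{3,3})$, for the smaller ones $F_7$ or $F_7^*$). The edges $x,y,z$ are chosen exactly so that $T$ is not a cocircuit of $M(G_k)$ -- otherwise $M(G_k)_T=M(G_k)$ by Lemma \ref{c2dcm}(iv) -- and so that the forbidden minor is easy to read off. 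This part is a finite, mechanical check carried out case by case.

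\emph{The families $\tilde{G_4}$ and $\tilde{G_5}$.} Here the minor is $M(\tilde{G_k})=M(G_k+e)$ for $k\in\{4,5\}$, where $e$ is the adjoined edge. The point is to place $e$ inside $T$: choose $T=\{e,f,g\}$, where $\{f,g\}\subseteq E(G_k)$ is a pair for which the $2$-element splitting $M(G_k)_{\{f,g\}}$ is already non-graphic. Then Lemma \ref{c2dcm}(ii) gives
\[
M(G_k+e)_{\{e,f,g\}}\backslash e=\bigl(M(G_k+e)\backslash e\bigr)_{\{f,g\}}=M(G_k)_{\{f,g\}},
\]
a non-graphic minor of $M(\tilde{G_k})_T$; hence $M(\tilde{G_k})_T$ is non-graphic. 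This argument is uniform over \emph{every} admissible $e$ (loop, parallel edge, or pendant edge), because $M(G_k+e)\backslash e=M(G_k)$ in all cases; in particular the hypothesis ``at least one end of $e$ lies in $V(G_k)$'' plays no role in this direction. What remains is to check that a bad pair $\{f,g\}$ exists for $G_4$ and for $G_5$; by Theorem \ref{c2bm} this is exactly the statement that each of $G_4,G_5$ has one of $G_1,G_2,G_3$ (Figure 1) as a minor, which is read off the figures, and if one prefers, an explicit bad pair can simply be exhibited in each of $G_4$ and $G_5$.

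\emph{Main obstacle.} There is no conceptual difficulty; the work is bookkeeping. For each of $G_6,\dots,G_9$ one must write out a small binary matrix, carry out the row reduction that produces $M(G_k)_T$, and correctly identify the excluded minor that emerges, keeping track of which original element becomes which element of the reduced matroid. The only subtle point in the family cases is to make sure the chosen pair $\{f,g\}$ really is bad for $G_k$ (equivalently, that $G_k$ contains one of $G_1,G_2,G_3$); once that is pinned down, Lemma \ref{c2dcm}(ii) does everything.
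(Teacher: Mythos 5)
Your proposal is correct and follows essentially the same route as the paper: the same reduction via Lemma \ref{c2dcm}(i), the same matrix verification for $G_6,\dots,G_9$ with $T=\{x,y,z\}$ (the paper carries it out explicitly only for $G_6$ and asserts $F_7^*$, $F_7^*$, $M^*(K_5)$, and $M^*(K_{3,3})$ after a contraction for the others, just as you anticipate), and the same use of Lemma \ref{c2dcm}(ii) together with Theorem \ref{c2bm} for $\tilde{G_4},\tilde{G_5}$ — where the paper simply observes $G_4=G_3$ and $G_5=G_1$, so the required bad pair exists.
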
 
\begin{proof} We prove that for each of  these six circuit matroids contains a set $T$ of three elements such that splitting with respect to $T$ results into a non-graphic matroid. 	Note that $G_4=G_3$ and $G_5=G_1$, where $G_1$ and $G_3$ are the graphs as shown in Figure 1. The matroid $M(\tilde{G_{i}})$ is obtained by adding an element $z_i$ to $M(G_i)$ for $i=4,5$. Hence $M(\tilde{G_{i}})\backslash z_i=M(G_i)$.   By Theorem \ref{c2bm}, there exists $T_i \subset E(M(G_i))$ with $|T_i|=2$ such that $M(G_i)_{T_i}$ is not graphic for $i=4,5$. Let $T=T_i \cup z_i$. By Lemma \ref{c2dcm}(ii),   $M(\tilde{G_{i}})_T \backslash  z_i \cong (M(\tilde{G_{i}}) \backslash z_i)_{T_i} \cong M(G_i)_{T_i}.$  Hence $M(\tilde{G_{i}})_T\backslash z_i$ is not graphic and so $M(\tilde{G_{i}})_T$ is not graphic.

	 Suppose $M=M(G_6)$.  Let  $ T = \{x,y, z\},$ where $x,y,z$ are the edges of the graph $G_6$  as shown in Figure 2.  Then the following are the matrix representations of $M$ and  $M_T$, respectively.  
	\begin{center}
		$A = $\bordermatrix{    ~&x&y&z&~&~&~&~\cr
			~&1&0&0&0&1&0&1\cr
			~&0&1&0&0&1&1&0\cr
			~&0&0&0&1&1&1&1},
		$A_T = $\bordermatrix{  ~&x&y&z&~&~&~&~\cr
			~&1&0&0&0&1&0&1\cr
			~&0&1&0&0&1&1&0\cr
			~&0&0&0&1&1&1&1\cr
			~&1&1&1&0&0&0&0}.
	\end{center} 
	Let $A'$ be the matrix obtained from the matrix $A_T$ by performing row operation $R_4 \rightarrow R_4+(R_1+R_2)$ and then interchanging the third and fourth row of the resulting matrix. 
		\begin{center} 
		$A' = $\bordermatrix{  ~&~&~&~&~&~&~&~\cr
		~&1&0&0&0&1&0&1\cr
		~&0&1&0&0&1&1&0\cr
		~&0&0&1&0&1&1&1\cr
		~&0&0&0&1&0&1&1}.
\end{center}
	Then the matrix $A'$ represents the matroid $F_7^*.$  Thus, we have  $ M(G_{6})_T=M[A_T] \cong M[A'] \cong F^*_7.$  Therefore by Theorem \ref{c1cgm}, $M_T$ is not graphic if $M=M(G_6)$

	Let $ T_j = \{x, y, z\} ,$ where $x$, $y$, $z$ are the edges of the graph $ G_{j}$ as shown in the Figure 2 for  $j=7,8, 9$. From the matrix representations, one can easily check that $ M(G_{7})_{T_7} \cong  F^*_7,$  
	$M(G_{8})_{T_8} \cong M^*(K_5)$ and $ M(G_{9})_{T_9}/\{z\} \cong M^*(K_{3,3}).$ Hence, by Theorem \ref{c1cgm}, $M(G_j)_{T_j}$ is not graphic for $j=7,8,9.$

	Suppose $M$ contains a minor $N$ which is isomorphic to  one of the six circuit matroids as listed in the statement. Then $ N_T$ is not graphic for some $ T \subseteq E(N)$ with $|T| = 3.$ Let $ A , B \subseteq E(M)$ such that $ N \cong M\backslash A /B.$ Then $ T$ is disjoint from $A$ and $B.$   By repeated applications of Lemma \ref{c2dcm}(i), we have  $  M_T\backslash A /B  =  (M\backslash A /B)_T \cong N_T.$ Therefore $M_T$ is not graphic.  
\end{proof}

We give the proof of the converse part of  Theorem \ref{c2grspl3} using   the next three sections.

\section{Minimal Matroids}
Let $ \mathcal{F} = \{F_7, F_7^*, M^*(K_{3,3}), M^*(K_5)\}.$  We need the following lemmas.
\begin{lemma}  \label{c2minle}
	Let $M$ be a graphic matroid  such that  $M_T$ contains a minor isomorphic to $F$ for some $T \subseteq E(M)$ with $|T| = 3$ and $ F \in \mathcal{F}$.  Then $M$ has a minor $N$ containing $T$  such that one of the following holds.
	\begin{enumerate}[label=(\roman*).]
		\item   $N_T\cong F.$
		\item   $N_T/ T'\cong F$ for some non-empty subset $T'$ of $T.$ 
		\item $N$ is isomorphic to one of the matroids $M(\tilde{G_{1}})$, $M(\tilde{G_{2}})$ and $M(\tilde{G_{3}}),$ where $ G_{i}$ is the graph as shown in Figure 1 for $i=1,2,3$.
		
	\end{enumerate} 
\end{lemma}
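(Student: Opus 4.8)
The plan is to pick $N$ to be a minor of $M$ with $T\subseteq E(N)$ and $|E(N)|$ as small as possible subject to the requirement that $N_T$ still have a minor isomorphic to $F$; such an $N$ exists since $M$ itself is a candidate. Choose disjoint sets $A,B\subseteq E(N)$ with $N_T\backslash A/B\cong F$. The first step is a clean use of Lemma \ref{c2dcm}(i): if some $x\in(A\cup B)\setminus T$, then deletion or contraction of $x$ commutes with the splitting, so $(N\backslash x)_T$ or $(N/x)_T$ still has an $F$-minor while $N\backslash x$ or $N/x$ is a strictly smaller minor of $M$ that still contains $T$, contradicting minimality. Hence $A\cup B\subseteq T$.

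Next I would distinguish whether $A=\emptyset$. If $A=\emptyset$, then $N_T/B\cong F$ with $B\subseteq T$, which is conclusion (i) when $B=\emptyset$ and conclusion (ii) (with $T'=B$) otherwise. So assume $A\neq\emptyset$. One cannot have $A=T$: then $B=\emptyset$ and, by Lemma \ref{c2dcm}(iii), $N\backslash T=N_T\backslash T\cong F$, which is absurd since $N\backslash T$ is graphic and $F$ is not. Now fix $y\in A$. Applying Lemma \ref{c2dcm}(ii) to the elements of $A$ turns $N_T\backslash A/B\cong F$ into the statement that $(N\backslash y)_{T\setminus\{y\}}$ has a minor isomorphic to $F$. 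In particular $(N\backslash y)_{T\setminus\{y\}}$ is non-graphic, $|T\setminus\{y\}|=2$, and $N\backslash y$ is graphic; so by Theorem \ref{c2bm}, $N\backslash y$ has a minor isomorphic to $M(G_j)$ for some $j\in\{1,2,3\}$.

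It remains to upgrade this to conclusion (iii). Minimality of $N$ propagates: no proper minor $P$ of $N\backslash y$ containing $T\setminus\{y\}$ can have $P_{T\setminus\{y\}}$ carrying an $F$-minor, because the deletions and contractions producing such a $P$ avoid both $y$ and $T\setminus\{y\}$, hence all of $T$; applying them to $N$ would give a strictly smaller minor $N'\supseteq T$ of $M$ with $(N')_T\backslash y=P_{T\setminus\{y\}}$, so $(N')_T$ has an $F$-minor, a contradiction. Thus $N\backslash y$ is minimal for the two-element splitting problem, and I would deduce from the explicit structure of $G_1,G_2,G_3$ behind Theorem \ref{c2bm} --- each of them realizing a member of $\mathcal F$ after a suitable two-element split, all of their proper minors splitting into graphic matroids --- that $N\backslash y\cong M(G_j)$. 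Since $N$ is graphic and $N\backslash y\cong M(G_j)$, $N$ is a graphic single-element extension of $M(G_j)$, and by definition every such extension is isomorphic to $M(\tilde{G_j})$ for a suitable added edge; this is (iii).

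The main obstacle is precisely the passage from \textquotedblleft $N\backslash y$ has an $M(G_j)$-minor\textquotedblright\ to \textquotedblleft $N\backslash y\cong M(G_j)$\textquotedblright: one must rule out the existence of an element outside a copy of $M(G_j)$ that is nonetheless essential for the splitting obstruction, and this requires the fine structure of $G_1,G_2,G_3$ rather than merely knowing that they are the excluded minors of Theorem \ref{c2bm}. Establishing this is the purpose of the remaining lemmas in this section on minimal matroids.
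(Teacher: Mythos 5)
Your reduction is the same as the paper's: write the $F$-minor of $M_T$ as $M_T\backslash T_1/T_2$, push the parts of $T_1,T_2$ lying outside $T$ through the splitting via Lemma \ref{c2dcm}(i) (you do this by an extremal choice of $N$, the paper by setting $N=M\backslash T_1''/T_2''$; these are equivalent), and then case on how the remaining deleted set $A$ meets $T$. One structural divergence matters. The paper disposes of $|A|=2$ separately: the third element $z$ of $T$ becomes a coloop of $N_T\backslash A$, forcing $B=\{z\}$ and hence $F\cong N\backslash T$, a contradiction. You instead fold $|A|=2$ into the $|A|=1$ analysis, where your conclusion that $(N\backslash y)_{T\setminus\{y\}}$ merely \emph{has} an $F$-minor is strictly weaker than the exact relation $(N\backslash y)_{X}/T'\cong F$ with $T'\subseteq X=T\setminus\{y\}$ that the only surviving case ($|A|=1$) actually provides.

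The genuine gap is the step you yourself flag as ``the main obstacle'': passing from ``$N\backslash y$ has an $M(G_j)$-minor'' to ``$N\backslash y\cong M(G_j)$'' is never closed, and your claim that ``establishing this is the purpose of the remaining lemmas in this section'' is incorrect. Lemmas \ref{c2nocckts} and \ref{c2nocirut} and Sections 3--5 analyze minimal matroids for the \emph{three}-element problem arising in cases (i) and (ii); none of them proves the two-element classification you need. The paper closes this step by importing it from Shikare--Waphare \cite{shikare2010excluded}: their proof of Theorem \ref{c2bm} shows that a graphic $P$ with $P_X/T'\cong F$, $|X|=2$, $T'\subseteq X$, is itself isomorphic to one of $M(G_1)$, $M(G_2)$, $M(G_3)$. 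To complete your argument you must either first reduce to that exact form (i.e., eliminate $|A|=2$ as the paper does) and then cite that statement, or prove the minimal-matroid classification for the two-element splitting problem yourself; as written, neither is done.
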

\begin{proof}	
	Since $M_T$ has a minor isomorphic to $F,$ there are $ T_1, T_2 \subseteq E(M)$ such that $M_T \backslash T_1/T_2 \cong F.$  Let $ T_i' = T \cap T_i$ and $ T_i'' = T_i - T_i'$ for $ i = 1, 2.$ Then $T_i'$ is a subset of $T$ while $T_i''$ is disjoint from $T.$  By Proposition \ref{c2dcm}(i),  $M_T \backslash T_1''/T_2'' = (M \backslash T_1''/T_2'')_T.$ Let $N = M \backslash T_1''/T_2''.$ Then $N$ is a minor of $M$  containing $T$ such that $ F \cong M_T \backslash T_1 /T_2 = N_T\backslash T_1'/T_2'.$

	Suppose $ T_1' = \emptyset.$  Then $N_T /T_2' \cong F.$ If $T_2' = \emptyset ,$ then (i) holds, otherwise  (ii) holds.
	
	Suppose $ T_1'\neq \emptyset.$  We prove that  (iii) holds.  As $ T_1' \subseteq T,$ we have $|T_1'| = 1, 2$ or 3.   Assume that $|T_1'|  = 3.$ Then $ T_1' = T$ and  $ T_2' = \emptyset.$ Hence $ F \cong N_T \backslash T_1' =  N_T \backslash T = N \backslash T$ by Lemma \ref{c2dcm}(iii).  This shows that $F$ is a minor of $N$ and so it is a minor of the graphic matroid $M,$ a contradiction.
	
	Assume that $| T_1'| = 2.$ Then $T - T_1'$ consists of only one element, say $z.$ Clearly, $z$ is a coloop of  $N_T \backslash T_1'.$ However, being a 3-connected matroid, $F$ does not contain a coloop. Therefore $ z \in T_2',$  in fact, $T_2' = \{z\}.$ Thus $ F \cong N_T \backslash T_1' / T_2' =  N_T \backslash T_1' \backslash T_2' = N_T \backslash T = N \backslash T.$ In this case also we get a contradiction. 
	
	Thus $|T_1'| = 1.$ Let $T_1' = \{z\}$,  $X = T - T_1'$ and $P = N \backslash z.$   Then $P$ is a minor of $N$  and $|X| = 2.$ Further,  $P$ is graphic as $N$ is graphic. By Proposition \ref{c2dcm}(ii), $ N_T \backslash T_1' = N_T \backslash z = (N \backslash z)_{T - \{z\}} = P_X.$  Thus we have $ P_X/T_2' \cong F$ with $T_2'$ as a subset of $X.$   Hence this case  reduces to the case of graphic splitting by two elements. In \cite{shikare2010excluded}, it is proved  that such $P$ is isomorphic to one of the three matroids as stated in  Theorem \ref{c2bm}. Therefore $N$ is isomorphic to one of the three matroids as stated in (iii). \end{proof}

\begin{lemma} \label{c2nocckts}
	The minor $N$ of the matroid $M$ as stated in Lemma \ref{c2minle}   can be chosen so that it  does not contain a 2-cocircuit. 
\end{lemma}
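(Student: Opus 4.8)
The plan is to start from the minor $N$ provided by Lemma \ref{c2minle} and, whenever $N$ has a $2$-cocircuit, contract or delete an element of it to produce a smaller minor that still witnesses one of the three conclusions of Lemma \ref{c2minle}, repeating until no $2$-cocircuit remains. The case analysis follows the trichotomy (i)--(iii) of Lemma \ref{c2minle}. The key observation is that a $2$-cocircuit $\{a,b\}$ of $N$ consists of two parallel elements in the dual, so in $N$ itself they form a $2$-element cocircuit; contracting $a$ makes $b$ a coloop, and deleting $b$ leaves the rest of the structure essentially intact. I would split according to whether the $2$-cocircuit $\{a,b\}$ meets $T$.

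First I would handle cases (i) and (ii), i.e. $N_T \cong F$ or $N_T/T' \cong F$ with $\emptyset \neq T' \subseteq T$. Suppose $\{a,b\}$ is a $2$-cocircuit of $N$ with $\{a,b\} \cap T = \emptyset$. Adjoining the splitting row cannot destroy this cocircuit (the new row is zero on $a$ and $b$), so $\{a,b\}$ remains a $2$-cocircuit of $N_T$, hence of $N_T/T'$; but $F$ is $3$-connected and has no $2$-cocircuit, so one of $a,b$, say $a$, must have been deleted in the minor giving $F$. Then replace $N$ by $N \backslash a$: by Lemma \ref{c2dcm}(i) we have $(N\backslash a)_T = N_T \backslash a$, which still has the required $F$-minor, and $N\backslash a$ is a proper graphic minor of $M$ containing $T$. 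If instead $\{a,b\} \cap T \neq \emptyset$, say $a \in T$: then in $N_T$ the element $a$ has a $1$ in the new row, so $\{a,b\}$ need no longer be a cocircuit of $N_T$; here I would instead argue on $N\backslash b$ or $N/a$ and use Lemma \ref{c2dcm}(i)--(ii) to track how $T$ and the splitting row transform, checking in each subcase that the $F$-minor (possibly after adjusting $T'$) survives. The bookkeeping is routine but must be done for each of the (at most) three positions of the $2$-cocircuit relative to $T$.

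For case (iii), $N$ is one of $M(\tilde G_1), M(\tilde G_2), M(\tilde G_3)$, which are explicit small matroids. I would simply inspect them: if such an $N$ has a $2$-cocircuit $\{a,b\}$, then contracting one of $a,b$ and simplifying returns a matroid for which the two-element splitting analysis of \cite{shikare2010excluded} (as in the last paragraph of the proof of Lemma \ref{c2minle}) still produces a non-graphic $P_X$; chasing this back up shows the resulting smaller minor again falls into one of (i)--(iii). Alternatively, one can absorb this into the same descent: replacing $N$ by a proper minor strictly decreases $|E(N)|$, so the process terminates, and at termination the final $N$ has no $2$-cocircuit while still satisfying Lemma \ref{c2minle}.

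The main obstacle I expect is the case where the $2$-cocircuit meets $T$. Deleting or contracting an element of $T$ changes $T$ itself and hence the splitting operation under consideration, so one cannot merely quote Lemma \ref{c2dcm}(i); one must use parts (ii) and (iv) and verify that after the reduction the remaining set still has three elements (or that the drop to a smaller splitting set is harmless because it can be padded back, or reduces to the already-known two-element case). Getting this interaction between "$T$ has size exactly $3$" and "removing a $2$-cocircuit element" to cohere — without accidentally landing outside the hypotheses of Lemma \ref{c2minle} — is the delicate point, and I would organize it as a short separate claim handled before the main induction.
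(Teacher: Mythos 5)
There is a genuine gap, concentrated in exactly the cases you yourself flag as delicate. First, your handling of a $2$-cocircuit $\{a,b\}$ disjoint from $T$ is based on a misreading of Lemma \ref{c2minle}: in cases (i) and (ii) the matroid $F$ is obtained from $N_T$ by contracting a subset $T'$ of $T$ \emph{only} --- no deletions are available --- so nothing "must have been deleted in the minor giving $F$." Since $\{a,b\}$ stays a $2$-cocircuit of $N_T$ and survives contraction of any $T'$ disjoint from it, the correct conclusion is that this case is outright impossible ($F$ would contain a $2$-cocircuit); your proposed replacement of $N$ by $N\backslash a$ is unjustified, because $N_T/T'\cong F$ is an isomorphism of the whole matroid and one cannot delete an element and still have $F$. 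The real content of the argument is therefore that every $2$-cocircuit of $N$ must meet $T'\subseteq T$, which is precisely the case you defer as "routine bookkeeping."

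That case is not routine, and the specific moves matter. When $X=\{x,y\}$ meets $T$ in exactly one element $x$, the correct reduction is to \emph{contract the element outside} $T$: since $\{x,y\}$ is a series pair of $N_T$, one has $N_T/x\cong N_T/y=(N/y)_T$ by Lemma \ref{c2dcm}(i), so $N$ may be replaced by $N/y$ with $T$ unchanged and $T'$ shrunk by $\{x\}$; deleting $b$ or contracting $a\in T$, as you suggest, does not preserve the setup. The subcase $X\subseteq T$ requires a separate contradiction argument (the third element $z$ of $T$ becomes a coloop of $N_T$, forcing $z\in T'$, and one then shows $F\cong N\backslash T$, contradicting that $N$ is graphic), which your proposal does not supply. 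Finally, termination is not just "the minor gets smaller": each iteration consumes a distinct element of $T'$, and since $|T'|\leq 3$ the process stops after at most three contractions, at which point $T'=\emptyset$ and any remaining $2$-cocircuit would again survive into $F$. Without these three ingredients the proof is incomplete.
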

\begin{proof}  The matroids $M(\tilde{G_{1}})$, $M(\tilde{G_{2}}),$  $M(\tilde{G_{3}})$ do not contain a 2-cocircuit. Also, $F$ being a 3-connected matroid does not contain a 2-cocircuit.   Assume that  $N$ contains a 2-cocircuit, say $X = \{x, y\}.$ Then, by Lemma \ref{c2minle}, there is a subset $T'$ of $T$ such that $ N_T/T'$ is isomorphic to $F.$   By definition of the splitting operation,  $X$ is a 2-cocircuit of $N_T$ also. Hence $ T'$ intersects $X$ as $F$ does not contain a 2-cocircuit.

	Suppose $X \subseteq T.$ Then $ T = \{x, y, z\}$ for some $ z \in E(N).$ Clearly, $z$ is a coloop in $N_T.$ Hence  $ z \in T'.$ By Proposition \ref{c2dcm}(ii), $ N_T/z = N_T \backslash z = (N\backslash z )_{X}.$  Since $X$ is a cocircuit in $N,$  either it remains a  cocircuit in $N\backslash z $  or both $ x$ and $y$ become coloops in  $N\backslash z .$  Suppose $ X$ is a cocircuit in $N\backslash z .$  Then  $(N\backslash z )_X  = N\backslash z.$ Therefore $ F \cong N_T/T'  = N_T/z /(T'-\{z\}) = N\backslash z/(T'-\{z\}).$ This shows that $F$ is a minor of the graphic matroid $N,$ a contradiction. Thus, both $ x$ and $y$ are coloops in $N\backslash z .$ Therefore, they are coloops in $(N\backslash z )_X = N_T/z.$  Since $ F$ does not contain a coloop, both $x$ and $y$ belong to $T'.$ Thus $ T' = T =\{x, y, z\}$  and $ F \cong N_T/T'  = N_T/z /X  = N\backslash z/X =  N\backslash z \backslash X = N \backslash T.$  Therefore the graphic matroid $N$ has $F$ as a minor, a contradiction.
	
	Suppose $X$ is not a subset of $T.$ Then $T$ contains one member of $X.$ We may assume that $ T$ contains $ x.$ Hence $ T'\cap X = T\cap X = \{x\}.$ Since $\{x, y\}$ is a cocircuit of $N_T,$ we have $ N_T/x \cong N_T/y = (N/y)_T.$  Therefore $F\cong N_T/T' = N_T/x/(T' -\{x\})  \cong (N/y)_T/(T' -\{x\}).$ Thus, in this case,  we may replace the minor $N$ of $M$ by $ N/y.$  If $ N/y$ contains a 2-cocircuit say $\{x_1, y_1\},$ then $ x \notin \{x_1, y_1\}$ and  as above $T$ contains only one of them, say $x_1.$ Then  $( N/y/y_1)_T/(T'-\{x, x_1\})\cong F.$ So replace $N/y$ by $N/\{y, y_1\}.$ Similarly, if $N/\{y, y_1\}$ contains a 2-cocircuit, say $\{x_2, y_2\},$ then $T$ contains only one of them, say $x_2$ and  we have $ F \cong (N/\{y, y_1, y_2 \})_T.$  Note that  $N/\{y, y_1, y_2 \}$ does not contain a 2-cocircuit and so we can replace $N$ by $N/\{y, y_1, y_2 \}.$
	This completes the proof. \end{proof}

\begin{lemma}\label{c2nocirut}
	If the minor $N$ of the matroid $M$ as stated in Lemma \ref{c2minle}   does not satisfy Condition (iii), then it does not contain a coloop. 
\end{lemma}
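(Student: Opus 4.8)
The plan is to argue by contradiction: assume the minor $N$ has a coloop $x$ and deduce that $N$ satisfies Condition (iii). The one fact I would set up first is that the splitting operation preserves coloops, i.e.\ if $x$ is a coloop of a binary matroid $N$ then $x$ is a coloop of $N_T$ for every $T\subseteq E(N)$. This is immediate from Definition \ref{c11}: take a standard representation $[I\mid D]$ of $N$ with $x$ among the columns of $I$ (possible, since a coloop lies in every basis), so that the row of $D$ indexed by $x$ is the zero row; then in the matrix $A_T$ the column labelled $x$ is still the only column carrying a nonzero entry in the row of $I$ headed by $x$, hence $x$ is not in the span of the remaining columns, i.e.\ $x$ is a coloop of $N_T$. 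I would record this as a standalone observation, since it trivialises most of the case analysis.

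Next I would invoke Lemma \ref{c2minle}. As $N$ does not satisfy (iii), either $N_T\cong F$, or $N_T/T'\cong F$ for some non-empty $T'\subseteq T$, with $F\in\mathcal{F}$. In the first case $x$ is a coloop of $F$; in the second case, if $x\notin T'$, then contracting $T'$ (a set disjoint from the coloop $x$) leaves $x$ a coloop of $N_T/T'\cong F$. Both outcomes are impossible, since every member of $\mathcal{F}$, being $3$-connected, has no coloop. So the only surviving possibility is $N_T/T'\cong F$ with $x\in T'\subseteq T$.

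To deal with this last possibility I would use that $x$ is a coloop of $N_T$, so $N_T/x=N_T\backslash x$, together with Lemma \ref{c2dcm}(ii) (applicable because $x\in T$), which gives $N_T\backslash x=(N\backslash x)_{T\setminus\{x\}}$. Hence $F\cong N_T/T'=(N\backslash x)_{T\setminus\{x\}}/(T'\setminus\{x\})$, where $N\backslash x$ is graphic and $|T\setminus\{x\}|=2$. Thus $F$, which is non-graphic, is a minor of the splitting of the graphic matroid $N\backslash x$ by the two-element set $T\setminus\{x\}$, so Theorem \ref{c2bm} gives that $N\backslash x$ has a minor isomorphic to one of $M(G_1),M(G_2),M(G_3)$; and by the minimality built into the choice of $N$ (the same minimality that, via \cite{shikare2010excluded}, underlies the proof of Lemma \ref{c2minle}) the matroid $N\backslash x$ is in fact isomorphic to one of these three. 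Since $x$ is a coloop, $N=(N\backslash x)\oplus U_{1,1}$, i.e.\ the circuit matroid of $G_i$ with a pendant edge adjoined, so $N\cong M(\tilde{G_i})$ for some $i\in\{1,2,3\}$ --- contradicting that $N$ does not satisfy Condition (iii).

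The hard part is this final case, $x\in T'\subseteq T$. It forces a descent from a splitting by three elements to one by two elements, and the desired contradiction is available only once the minimality of $N$ is used precisely enough that the classification of Shikare and Waphare (Theorem \ref{c2bm}) yields $N\backslash x$ itself, rather than merely a matroid having the relevant minor. Everything else --- the cases $x\notin T'$ and $N_T\cong F$ --- collapses immediately once coloops are known to survive the splitting operation, which is why I would isolate that observation at the outset.
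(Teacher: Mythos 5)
Your proposal is correct and follows essentially the same route as the paper: coloops survive the splitting operation, which kills the cases $N_T\cong F$ and $x\notin T'$ outright, and the remaining case $x\in T'$ reduces via $N_T/x=N_T\backslash x=(N\backslash x)_{T-\{x\}}$ to the two-element splitting classification of Shikare and Waphare, forcing $N\cong M(\tilde{G_i})$ and hence Condition (iii). Your explicit matrix verification that splitting preserves coloops, and your remark that one needs the full strength of the classification in \cite{shikare2010excluded} (that $N\backslash x$ \emph{is} one of the three matroids, not merely contains one as a minor), are points the paper leaves implicit but do not change the argument.
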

\begin{proof}
	Suppose the minor $N$ of $M$ does not satisfy Condition (iii). Then $N$ satisfies Condition (i) or (ii).  Therefore $N_T \cong F$ or $ N_T/T'\cong F$ for some $ T' \subset T.$ Suppose $N$ contains a coloop, say $z.$ Then $ z$ is a coloop in $N_T.$  Since $ F$ does not contain a coloop,  $N_T \ncong F$.  Also,  $N_T/z = N_T\backslash z = (N\backslash z)_{T-\{z\}}.$  Therefore $ z\in T'.$  Hence $ F \cong N_T/T' = (N\backslash z)_{T-\{z\}} /(T' -\{z\}).$ This case reduces to the case of splitting by two elements.  Hence, by Theorem \ref{c2bm}, $N$ is isomorphic to one of the three matroids $M(\tilde{G_{1}})$, $M(\tilde{G_{2}})$ and $M(\tilde{G_{3}}).$ Thus $N$ satisfies Condition (iii) of Lemma \ref{c2minle}, a contradiction. 
\end{proof}

We are trying to prove that the six matroids as stated in Theorem \ref{c2grspl3} are the forbidden minors for the class of graphic matroids whose splitting with respect to three elements is graphic. Out of these six, we get two from the case of splitting by two elements as seen in Condition (iii) of Lemma \ref{c2minle}.   To get the remaining forbidden minors, we need only to consider the graphic matroid $N$ satisfying Condition (i) or (ii) of Lemma \ref{c2minle}. Then $N$ also satisfy Lemmas \ref{c2nocckts} and \ref{c2nocirut}. We call such matroids as minimal matroids with respect to $F \in \mathcal{F}.$  We define them formally as follows.
\begin{definition} \label{c2mindef}
	Let $N$ be a graphic  matroid and let $ F \in \mathcal{F}.$ We say that $ N$ is \textit{minimal} with respect to $F$ if the following conditions hold.
	\begin{enumerate}[label=(\roman*).]
	\item $N$ contains no coloop and no 2-cocircuit.
	\item  There is $ T \subseteq E(N)$ with $|T|=3$ such that $ N_T \cong F$ or $ N_T / T' \cong F$ for some non-empty subset $T'$ of $T.$   
	\end{enumerate}
	\end{definition}

We need to find the minimal matroids with respect to $F$ for every $ F \in \mathcal{F}.$

We relate a  minimal matroid $M$ with respect to $F$ with a particular type of minor $P$ so that $ M = P$ or $M$ is a coextension of $P$ by one, two or three elements.  To get such a minor $P,$ we first provide below the definition of  a binary matroid $M_T'$ which is a coextension of $M$ by an element $a$ and is an extension  of the splitting matroid  $M_T$  by $a.$   
\begin{definition} \cite{azanchiler} \label{c2esd} Let $M$ be a binary matroid with standard matrix representation $A$ over the field $GF(2)$ and let $T \subseteq  E(M).$ Let $A'_T$ be the matrix obtained from $A$ by adjoining one extra row to the matrix $A$ whose entries are 1 in the columns labeled by the elements of $T$ and zero otherwise and then adding one extra column labeled by $a$ with entry 1 in the last row and 0 elsewhere. Denote the vector matroid of $A'_T$ by $M'_T$.  \end{definition}

The ground set of $M_T'$ is $E(M)\cup\{a\}.$ From the definition of $M_T$ and $M_T',$ the following relations   follow immediately.  
\begin{lemma} \cite{azanchiler} \label{c2esc}
	Let $M$ be a binary matrod and $T \subseteq E(M).$    Then  
	\begin{enumerate}[label=(\roman*).]
		\item $M_T  =  M'_T \backslash a$ and 
		\item $M'_T /a \cong M.$ 
	\end{enumerate}
\end{lemma}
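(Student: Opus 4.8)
The plan is to read everything directly off the matrix definitions, since both $M_T$ and $M'_T$ are defined as vector matroids of explicitly described matrices. Let $A=[\,I_r\mid D\,]$ be the given standard representation of $M$ over $GF(2)$, and let $v$ be the $0/1$ row vector, indexed by $E(M)$, having a $1$ in precisely the coordinates belonging to $T$. By Definition \ref{c11}, $M_T$ is the vector matroid of the matrix $A_T$ obtained by appending the row $v$ to $A$; by Definition \ref{c2esd}, $M'_T$ is the vector matroid of the matrix $A'_T$ obtained from $A_T$ by appending one further column, labelled $a$, equal to the unit vector whose single $1$ lies in the last (i.e.\ the $(r+1)$-st) row.

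For part (i), I would simply note that deleting the column $a$ from $A'_T$ returns the matrix $A_T$ verbatim; since the deletion of a matroid element corresponds to removing the associated column from a representing matrix, this yields $M'_T\backslash a = M[A_T] = M_T$, both matroids having ground set $E(M)$.

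For part (ii), I would use that the column $a$ is a coordinate vector whose only nonzero entry lies in the last row of $A'_T$. Permuting columns so that $a$ immediately follows the first $r$ columns, and then clearing the entries of $v$ that lie below $I_r$ by elementary row operations (each such operation adds a row of the $I_r$-block to the last row), puts $A'_T$ in standard form $[\,I_{r+1}\mid D'\,]$ in which $a$ is the $(r+1)$-st column of $I_{r+1}$. I would then invoke the standard rule (see Oxley \cite{oxley2006matroid}) that contracting an element whose column is a coordinate vector of the identity block is effected by deleting that row and that column; applied here, the resulting matrix is exactly $[\,I_r\mid D\,]=A$, because the row operations performed left the first $r$ rows untouched. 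Hence $M'_T/a \cong M[A] = M$.

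I do not expect any real obstacle: the lemma is a direct unwinding of Definitions \ref{c11} and \ref{c2esd} together with the elementary behaviour of deletion and contraction on representing matrices. The only point deserving a sentence of justification is the observation in part (ii) that standardizing $A'_T$ disturbs neither the first $r$ rows nor the columns inherited from $A$, so that the matrix recovered after contracting $a$ is literally $A$.
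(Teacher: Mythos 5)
Your proof is correct, and it follows exactly the route the paper intends: the paper states this lemma without proof, remarking only that both parts "follow immediately" from Definitions \ref{c11} and \ref{c2esd}, which is precisely the direct matrix unwinding you carry out. The only cosmetic remark is that in part (ii) the preliminary row operations are not even needed — since the column of $a$ already has its unique $1$ in the last row, one may contract $a$ by deleting that row and column at once, recovering $A$ verbatim.
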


Using Definition \ref{c2esd}, we prove in the following lemma  that a minimal matroid with respect to  $F $ has a minor $P$ with the property that  there is a binary  coextension $N$ of $P$ by an element $a$ such that  $ N\backslash a \cong F.$ 
\begin{lemma} \label{c2mml}
	Let $M$ be a graphic matroid which is  minimal with respect to $F \in \mathcal{F}.$ Then there is a binary matroid $N$ containing an element    $a$ such that $N \backslash a \cong F.$ Further,  $N/a$ is a minor of $M$ such that either $M = N/a $ or $M$ is a coextension of $N/a$ by one, two or three elements.
\end{lemma}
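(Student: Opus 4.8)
The plan is to start from the minimality of $M$, which by Definition \ref{c2mindef}(ii) gives a set $T \subseteq E(M)$ with $|T|=3$ such that either $M_T \cong F$ or $M_T/T' \cong F$ for some non-empty $T' \subseteq T$. I would handle the two cases uniformly by constructing the auxiliary matroid $M_T'$ of Definition \ref{c2esd}: from Lemma \ref{c2esc} we have $M_T = M_T'\backslash a$ and $M_T'/a \cong M$, so $M_T'$ is a coextension of $M$ by the single element $a$. In the first case ($M_T \cong F$), set $P = M$ and $N = M_T'$; then $N\backslash a = M_T \cong F$, and $N/a \cong M = P$, so the conclusion holds with $M = N/a$ itself. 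The substance of the lemma is therefore the second case, and the coextension "by one, two or three elements" language is exactly the bookkeeping needed there.

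In the second case, write $T' \subseteq T$ with $1 \le |T'| \le 3$ and $M_T/T' \cong F$. The key idea is that contracting the elements of $T'$ in $M_T$ can be mimicked, one element at a time, inside the coextension $M_T'$. Concretely, I would take $P = M/T'$ (a minor of $M$, since $T' \subseteq E(M)$) and build $N$ as the coextension of $P$ obtained by adjoining $a$ with the appropriate row pattern; using Lemma \ref{c2dcm}(i) together with the commutation of the splitting-row operations with contraction of elements outside $T \setminus T'$, one gets $N\backslash a \cong (M_T)/T' \cong F$ while $N/a \cong M/T' = P$. To connect $P$ back to $M$ as a coextension, observe that $M$ is obtained from $P = M/T'$ by "uncontracting" the $|T'|$ elements of $T'$; since binary uncontraction of one element is a binary elementary coextension, $M$ is a coextension of $P$ by $|T'| \in \{1,2,3\}$ elements, which is precisely the stated alternative. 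Combining with the first case (where $M = N/a$, i.e. the trivial "zero extra elements beyond what's already recorded" situation) yields all of "$M = N/a$ or $M$ is a coextension of $N/a$ by one, two or three elements."

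I would organize the write-up as: (1) recall $M_T'$ and Lemma \ref{c2esc}; (2) dispose of the case $M_T \cong F$ in one line; (3) in the case $M_T/T'\cong F$, carefully track how $T'$-contraction passes through the extra coordinate $a$ — this requires noting that contracting an element of $T'$ in $M_T$ corresponds to contracting it in $M_T'$ (not in $M$ directly), and then that $(M_T')/T' \backslash a \cong F$ while $(M_T')/T'/a \cong M/T'$, so we may take $N = (M_T')/T'$ and $P = N/a = M/T'$; (4) observe $M$ is a coextension of $M/T'$ by $|T'|$ elements.

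The main obstacle I anticipate is step (3): one must verify that the contraction operations and the row-adjoining operation of Definition \ref{c2esd} genuinely commute at the matrix level — i.e. that after performing pivots to bring the columns of $T'$ into a standard form, the extra row added for $T$ still has the "$1$'s on $T$, including on the not-yet-contracted part of $T$" shape expected, so that the resulting matroid is literally of the form covered by Lemmas \ref{c2esc} and \ref{c2dcm}. This is a routine but slightly delicate matrix computation, and the cleanest route is probably to do it for a single contracted element and then iterate, rather than contracting all of $T'$ at once. The remaining identifications ($N\backslash a \cong F$, $N/a$ a minor of $M$, and the counting of the coextension) then follow formally.
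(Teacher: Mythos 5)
Your proposal is correct and follows essentially the same route as the paper: take $N = M_T'$ when $M_T \cong F$, and $N = M_T'/T'$ when $M_T/T' \cong F$, using Lemma \ref{c2esc} to get $N\backslash a \cong F$ and $N/a = M/T'$, so that $M$ is a coextension of $N/a$ by $|T'| \le 3$ elements. The ``obstacle'' you anticipate in step (3) is not one: the identity $M_T'/T'\backslash a = M_T'\backslash a/T'$ is just the standard commutation of contraction and deletion of disjoint sets, so no matrix-level verification is needed, and the paper invokes it directly.
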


\begin{proof}  There is $T \subseteq E(M)$ with $|T| = 3$ such that $M_T \cong F$ or $M_T/T' \cong F$ for some nonempty subset $T'$ of $T.$ 
	
	\vskip.3cm
	(i).\hskip.2cm Suppose $M_T \cong F.$ Let $N=M'_T.$ By Lemma \ref{c2esc}(i), $N$ is a binary matroid containing an element $a$ such that    $N\backslash a  = M_T \cong F$ and $ N /a = M.$

	\vskip.5cm
	\begin{center}
	\unitlength 1mm 
	\linethickness{0.4pt}
	\ifx\plotpoint\undefined\newsavebox{\plotpoint}\fi 
	\begin{picture}(30.25,40)(0,0)
	\put(25.5,9.75){\makebox(0,0)[cc]{$M$}}
	\put(25,40){\makebox(0,0)[cc]{$N=M_T'$}}
	\put(3.5,24.25){\makebox(0,0)[cc]{$M_T$}}
	\put(11,35.25){\makebox(0,0)[cc]{$N\backslash a$}}
	\put(30.25,25.25){\makebox(0,0)[cc]{$N/a$}}
	\thicklines
	\put(25,36.25){\vector(0,-1){22}}
	\put(8,28){\vector(-4,-3){.07}}\multiput(21.25,38)(-.0446127946,-.0336700337){297}{\line(-1,0){.0446127946}}
	\put(7.75,24.75){\vector(-4,3){.07}}\multiput(22,14.25)(-.0456730769,.0336538462){312}{\line(-1,0){.0456730769}}
		\put(13,2){\makebox(0,0)[cc]{Figure: Relation between $N$, $M_T$ and $M$}}
	\end{picture}
		\end{center}

	(ii).\hskip.2cm Suppose $M_T/T' \cong F$ for some nonempty subset $T'$ of $T.$ 
	In this case, define $ N = M_T'/T'.$  Then $N$ is a binary matroid containing $a$ such that    $N\backslash a  = M_T'\backslash a /T'= M_T/T'\cong F .$ Further,  $ N /a = M_T'/a/T' = M/T'.$ Hence $M$ is a coextension of $N/a$ by $T'.$ As $|T'| \leq 3,$ the result follows. 
\end{proof}

\section{Graphic Quotients of Non-graphic Binary Matroids}
By Lemma \ref{c2mml}, if a graphic matroid $M$ is minimal with respect to $F$ for some $F \in \mathcal{F}=\{F_7, F_7^*, M^*(K_{3,3}), M^*(K_5)\},$ then there is a binary matroid $N$ such that $N/a$ is a minor of $M$ and $N\backslash a \cong F$.  Hence $N/a$ is a graphic quotient of $F$. In this section, we find $N/a$ for every matroid of the collection $\mathcal{F}$.

We first prove the  following lemma.
\begin{lemma}\label{c2slemma}
	Let $N$ be a binary matroid  and $a$ be an element of $N$ such that  $N\backslash a \cong F$ for $ F \in \mathcal{F}$ and $N/ a$ is the circuit matroid of a connected graph  $G.$ Then the following holds.
	
	\begin{enumerate}[label=(\roman*).]		
		\item  $G$ is a block or it has two blocks one of which is a loop.
		\item $G$ contains at most one loop.
		\item $G$ does not contain more than two edges with same pair of end vertices.
		\item $G$ does not contain a 2-edge cut. 
		\item If $F$ is an Eulerian matroid, then $G$ is an Eulerian graph.
	\end{enumerate}
\end{lemma}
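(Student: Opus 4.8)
The plan is to leverage a few elementary relations among $N$, $N\backslash a = F$ and $N/a = M(G)$, pushing them through using only the $3$-connectedness and non-graphicness of $F$ together with standard binary-matroid facts. First I would fix the setup. If $a$ were a loop or a coloop of $N$ then $N\backslash a = N/a$, forcing $F = M(G)$ and contradicting that $F$ is non-graphic (each member of $\mathcal F$ is one of the excluded minors of Theorem \ref{c1cgm}). So $a$ is neither; hence $E(F) = E(M(G)) = E(N) - a$, $r(F) = r(N)$, and $r(M(G)) = r(N) - 1$. Since $F$ is $3$-connected with at least $7$ elements, $F$ has no loop, coloop, parallel pair or $2$-cocircuit; because $F = N\backslash a$, it follows that $N$ is loopless and coloopless, that every parallel pair of $N$ contains $a$, and that every $2$-cocircuit of $N$ contains $a$. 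I would also record the routine facts: the circuits of $N\backslash a$ are the circuits of $N$ avoiding $a$; the cocircuits of $N/a$ are the cocircuits of $N$ avoiding $a$; the coloops of $N/a$ are the coloops of $N$; a two-element circuit of $N/a$ comes from a circuit of $N$ of size two or three; and, in a binary matroid, the symmetric difference of two circuits is a disjoint union of circuits.

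Parts (ii), (iii) and (iv) then follow quickly. A loop $\ell$ of $G$ is a loop of $N/a$ but not of $N$, so $\{\ell, a\}$ is a circuit of $N$; if $G$ had two loops $\ell_1, \ell_2$, the symmetric difference of $\{\ell_1, a\}$ and $\{\ell_2, a\}$ would be a disjoint union of $N$-circuits, hence (being a loop-free set of size two) a parallel pair of $N$ avoiding $a$, a contradiction. The same trick gives (iii): three parallel non-loop edges $e_1, e_2, e_3$ of $G$ would each yield a triangle $\{e_i, e_j, a\}$ of $N$ (no $\{e_i, e_j\}$ can itself be a circuit of $N$ since $F$ is simple), and a symmetric difference again produces a parallel pair of $N$ avoiding $a$. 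For (iv), a $2$-edge cut of $G$ is a $2$-cocircuit of $M(G) = N/a$, hence a $2$-cocircuit of $N$ avoiding $a$, hence contains a cocircuit of $F$ of size one or two, which is impossible since $F$ has no coloop and no $2$-cocircuit.

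For (i) I would first note that, since $M(G)$ has no coloop, $G$ is bridgeless, so by (ii) every block of $G$ is either a $2$-connected block (with at least two edges) or a loop, and there is at most one loop. Thus it suffices to show that $M(G) = N/a$ has at most one component with two or more elements. Suppose it has two such components; let $E_1$ be the ground set of one of them and $E_2$ the set of all remaining elements of $M(G)$, so that $(E_1, E_2)$ is a partition of $E(F)$ with $|E_1|, |E_2| \ge 2$ and $M(G) = M(G)|E_1 \oplus M(G)|E_2$. Using $r_N(X) \le r_{N/a}(X) + 1$ for every $X \subseteq E(N) - a$, together with $r(N) = r(M(G)) + 1 = r_{N/a}(E_1) + r_{N/a}(E_2) + 1$, one computes $\lambda_F(E_1) = r_N(E_1) + r_N(E_2) - r(N) \le 1$. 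Since $|E_1|, |E_2| \ge 2$, this exhibits a $1$- or $2$-separation of $F$, contradicting $3$-connectedness. Hence $M(G)$ is connected or is the direct sum of a connected matroid with a loop; translating back, $G$ is a block, or $G$ is a $2$-connected block together with a single loop, i.e. $G$ has exactly two blocks, one of which is a loop.

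Finally, (v) is uniform and short: if $F$ is Eulerian then $E(N) - a = E(F)$ is a disjoint union of circuits of $F$, and each such circuit is a circuit of $N$ avoiding $a$, hence lies in the cycle space of $N/a$ and is therefore a disjoint union of circuits of $N/a$; so $E(M(G))$ is a disjoint union of circuits of $M(G)$, i.e. $M(G)$, and with it $G$, is Eulerian. (No identification of the Eulerian members of $\mathcal F$, namely $F_7$ and $M^*(K_{3,3})$, is needed for the implication.) I expect the only delicate point to be the connectivity bookkeeping in (i): one must transfer the separation from $N/a$ to $N\backslash a$ with no hypothesis on the connectivity of $N$, and this is exactly what the rank inequality $r_N(X) \le r_{N/a}(X) + 1$ delivers. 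Everything else is a short symmetric-difference or cycle-space computation.
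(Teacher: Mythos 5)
Your proposal is correct and follows essentially the same route as the paper: the same preliminary reduction (if $a$ were a loop or coloop then $N/a=N\backslash a\cong F$ would be non-graphic), the same rank/connectivity computation transferring a separation of $N/a$ to a $2$-separation of $N\backslash a$ for (i), and the same circuit/cocircuit transfer arguments for (ii)--(iv). The only variation is in (v), where you argue directly that a partition of $E(F)$ into circuits of $N$ avoiding $a$ yields a partition of $E(N/a)$ into circuits via the cycle space, whereas the paper dualizes and works with bipartite matroids; both are valid and equivalent.
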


\begin{proof}  
	 Since $F$ is 3-connected,  $F$  does not contain a loop, a coloop, a 2-circuit and a 2-cocircuit.    Suppose $a$ is a loop or coloop of $N.$ Then $ F \cong N\backslash a = N/a$ and so, by Theorem \ref{c1cgm}, $N/a$ is not graphic, a contradiction. Hence $a$ belongs to a circuit of $N$ of size at least two. Since $ N \backslash a$ is 3-connected,  $N$ is connected.
	
	\vskip.5cm
	$(i).$\hskip.3cm Suppose $G$ is not a block.  Let $B_1$ be a block of $G$ containing the maximum number of edges of $G$.   Suppose $B_1$  consists of only one edge. Then every block of $G$ consists of only one edge. Hence every element of $ N/a$ is a loop or a coloop. This implies that all elements of $N$ are parallel to each other. Hence all elements of $N\backslash a$ are parallel to each other. As $F$ has at least seven elements, $N\backslash a$ also has at least seven elements. This shows that $F$ contains a 2-circuit, a contradiction. 
	
	Hence $B_1$ consists of at least two edges. Let $B_2$ be the  union  of all blocks of $G$ other than $B_1.$  Suppose $B_2$ also has at least two edges. 	 Let $r$ be the rank function of $N.$ Let $B'_1$ and $B'_2$ be the  minors of $N$ each containing the element $a$ such that $ B_i'/a = B_i$ for $ i = 1, 2.$  Then $r(B_i'\backslash a) \leq r(B_i) + 1$ for $ i = 1, 2.$ Also, $ r(N\backslash a)  = r(N)  = r(N/a) + 1.$  Since $G$ is not $2$-connected, $N/a$ is disconnected and $r(B_1)+r(B_2)-r(N /a)=0$. 
	Hence 
	$$r(B'_1 \backslash a)+r(B'_2 \backslash a)- r(N\backslash a)\leq  r(B_1)+1+r(B_2)+1-r(N/ a)-1=1$$

	Thus $(E(B_1), E(B_2))$  gives a 2-separation of $N\backslash a.$ Therefore $N\backslash a$ is not 3-connected, a contradiction.
	Thus $|E(B_2)| = 1$ and  hence $B_2$ is a block of $G$  and it must be a loop of $G.$  
	
	\vskip.5cm
	 $(ii).$\hskip.3cm Suppose $G$ contains two loops, say $ x$ and $y.$ Then $x$ or $y$ or both are loops in $N$ or  $ \{a, x\}$ and $ \{a, y\}$ and so $\{x, y\}$ are circuits in $N.$ 
	Hence $ N\backslash a$ contains a loop or 2-circuit.  This shows that $F$ contains a loop or a 2-circuit, a contradiction. Thus $G$ contains at most one loop.
	
	\vskip.5cm
	$(iii).$\hskip.3cm  Assume that $G$ has three edges which are parallel to each other. It follows that at least two of them are in a $2$-circuit of $N.$ Hence $N\backslash a$ contains a 2-circuit, a contradiction. 
	
	\vskip.5cm
	 $(iv).$\hskip.3cm  Assume that $G$ contains a 2-edge cut  $\{x, y\}.$ Then $\{x, y\}$  is a 2-cocircuit in $N.$  Therefore $\{x, y\}$ contains a cocircuit of $N\backslash a.  $ Hence $F$ has a 1-cocircuit or a 2-cocircuit, a contradiction.  
	
	\vskip.5cm
	$(v).$\hskip.3cm Suppose $F$ is an Eulerian matroid. Then $N \backslash a$ is also Eulerian and so its dual $(N \backslash a)^* = N^* /a$ is a bipartite matroid.  Therefore   $N^* /a$ does not contain a circuit of odd size. Hence every circuit of $N^*$ which avoids  $a$ has even size. This shows that every circuit of $N^* \backslash a$ has even size.  Therefore $N^* \backslash a$ is a bipartite. Hence its dual $N/a$ is an Eulerian matroid. Consequently, the corresponding graph $G$ is an Eulerian graph. 
\end{proof}

We now find graphic quotients of $F$ for every $F \in \{F_7, F^*_7, M(K^*_{3,3}), M^*(K_5)\}.$

The following result give graphic elementary quotients of the matroid $F_7^*$.

\begin{lemma} \label{c2mff*7}
	Let $N$ be a binary matroid and $a \in E(N)$. If $N \backslash a \cong  F^*_7$ and $N /a$ is a graphic matroid, then $N/a$ is isomorphic to  $M(G_{10})$ or $M(G_{11})$, where $G_{10}$ and $G_{11}$ are the graphs as shown in Figure 3. 
\end{lemma}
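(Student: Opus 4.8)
The plan is to first reduce the statement to a short finite list of candidate graphs by a parameter count together with Lemma~\ref{c2slemma}, and then to inspect the candidates one at a time. Since $N\backslash a\cong F_7^*$ is $3$-connected, $N$ is connected and $a$ is neither a loop nor a coloop of $N$; hence $r(N)=r(N\backslash a)=4$ and $|E(N)|=8$, so $N/a$ is a connected binary matroid of rank $3$ on $7$ elements. Writing $N/a=M(G)$ for a connected graph $G$, we get $|V(G)|=4$ and $|E(G)|=7$. Now I would apply Lemma~\ref{c2slemma} with $F=F_7^*$; note that $F_7^*$ is \emph{not} Eulerian, because its $7$ elements cannot be partitioned into circuits (every circuit of $F_7^*$ has exactly $4$ elements), so part~(v) is vacuous. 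Parts (i)--(iv) then say that $G$ is a block on $4$ vertices, or a block on $4$ vertices with a single loop attached, that no parallel class of $G$ has more than two edges, and that $G$ has no $2$-edge-cut.

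Next I would enumerate the graphs satisfying these constraints. A loopless block on $4$ vertices with $7$ edges is obtained from $C_4$, $K_4-e$ or $K_4$ by doubling edges up to multiplicity two, and the no-$2$-edge-cut condition rules out most of these, leaving only a handful; similarly, a $4$-vertex block with $6$ edges (to which a loop is then attached) is either $K_4$ or $C_4$ with two ``opposite'' edges doubled. Several of the resulting graphs turn out to have isomorphic cycle matroids, so up to isomorphism there remain only a few candidates for $M(G)$.

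For each candidate I would decide whether it genuinely arises. Since $M(G)=N/a$ is an elementary quotient of $F_7^*$, equivalently $F_7^*$ is a binary elementary lift of $M(G)$, Lemma~\ref{liftth} shows that a candidate $M(G)$ occurs if and only if $M(G)_T\cong F_7^*$ for some $T\subseteq E(M(G))$; this can be checked directly from a standard matrix representation of $M(G)$, and only sets $T$ that are neither empty nor cocircuits and that destroy every $2$-circuit of $M(G)$ need to be tried, which leaves very few. Alternatively one can argue from the $F_7^*$ side: classify the binary single-element extensions $Q$ of $F_7^*$ (there are only a few, since $\operatorname{Aut}(F_7^*)$ is transitive and acts with few orbits on $PG(3,2)\setminus F_7^*$) and compute $Q/a$ for each; here one uses the classical fact $F_7^*/x\cong M(K_4)$, so that adding a point in parallel to an existing one gives $Q/a\cong M(K_4)$ with a loop adjoined, accounting for one of $G_{10},G_{11}$. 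In each remaining case one either exhibits a valid $T$ (identifying the graph as $G_{10}$ or $G_{11}$) or shows no $T$ works, e.g.\ because every splitting of $M(G)$ is graphic, or because $M(G)_T$ never matches the circuit-size distribution of $F_7^*$; by Theorem~\ref{c1cgm} this is exactly the obstruction to graphicness that we are tracking.

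I expect the real work to be in the last step: correctly drawing up the candidate list (in particular collapsing the graphs with isomorphic cycle matroids), and for the ``near misses'' ruling out \emph{every} admissible $T$, not merely the obvious ones. The parameter count and Lemma~\ref{c2slemma} do the bookkeeping cheaply, but the final elimination is where care is needed, and it is what pins the answer down to precisely the two graphs $G_{10}$ and $G_{11}$ of Figure~3.
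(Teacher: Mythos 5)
Your reduction is the same as the paper's: count parameters to get a connected graph $G$ on $4$ vertices with $7$ edges, apply Lemma \ref{c2slemma}(i)--(iv) (and your observation that $F_7^*$ is not Eulerian, so (v) is vacuous, is correct), and conclude that $G$ is built from $C_4$, $K_4-e$ or $K_4$ by doubling edges, possibly with one loop attached. Up to that point the proposal is sound.

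The gap is that the decisive step --- eliminating every candidate other than $G_{10}$ and $G_{11}$ --- is only announced, never performed. This matters because the filter of Lemma \ref{c2slemma} does \emph{not} leave only two graphs: for instance $K_4$ with one edge doubled, and $K_4-e$ with the two edges meeting both degree-two vertices doubled, are loopless blocks on $4$ vertices with $7$ edges, no $2$-edge-cut and no parallel class of size three, so they survive your bookkeeping and must be killed by some further argument; likewise $C_4$ with two opposite edges doubled plus a loop still has a $2$-edge-cut and needs to be noticed as such. Saying ``one either exhibits a valid $T$ or shows no $T$ works, e.g.\ because \dots'' defers exactly the content of the lemma. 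The idea you are missing, and which the paper uses to dispose of these cases cheaply, is that every circuit of $F_7^*$ has size $4$ (it is $3$-connected and bipartite), while a $2$-circuit or $3$-circuit of $N/a$ lifts to a circuit of $N$ of size at most one larger; a short symmetric-difference argument then produces a circuit of size $2$, $3$ or $5$ in $N$ avoiding $a$, hence in $N\backslash a\cong F_7^*$ --- a contradiction for every surviving candidate except $G_{10}$ and $G_{11}$. Your alternative route through single-element binary extensions of $F_7^*$ would also work and is arguably cleaner (a parallel extension gives $F_7^*/x$ plus a loop, i.e.\ $M(K_4)$ plus a loop $=M(G_{10})$, and a new point lies in $PG(3,2)\setminus F_7^*\cong AG(3,2)$), but it needs the stabilizer of $F_7^*$ to act transitively on those $8$ points to reduce to a single further case; ``acts with few orbits'' is not enough, and that transitivity is neither stated precisely nor proved.
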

\begin{center}
	\unitlength .7mm 
	\linethickness{0.4pt}
	\ifx\plotpoint\undefined\newsavebox{\plotpoint}\fi 
	\begin{picture}(157.83,42.875)(0,0)
	\put(70,-4){\makebox(0,0)[cc]{Figure 3}}
	\put(37.06,17.37){\circle*{1.33}}
	\put(4.56,17.37){\circle*{1.33}}
	\put(52.06,17.37){\circle*{1.33}}
	\put(19.56,17.37){\circle*{1.33}}
	\put(52.06,30.37){\circle*{1.33}}
	\put(19.56,30.37){\circle*{1.33}}
	\put(52.06,30.37){\line(0,-1){13}}
	\put(19.56,30.37){\line(0,-1){13}}
	\put(52.06,17.37){\line(-1,0){15}}
	\put(19.56,17.37){\line(-1,0){15}}
	\qbezier(37.06,17.37)(37.56,18)(37.06,17.12)
	\qbezier(4.56,17.37)(5.06,18)(4.56,17.12)
	\qbezier(37.06,17.12)(36.94,18.12)(37.31,17.12)
	\qbezier(4.56,17.12)(4.44,18.12)(4.81,17.12)
	\qbezier(37.31,17.12)(36.44,18.37)(37.06,17.62)
	\qbezier(4.81,17.12)(3.94,18.37)(4.56,17.62)
	\put(36.56,29.87){\circle*{1.33}}
	\put(4.06,29.87){\circle*{1.33}}
	\put(36.56,30.37){\line(1,0){16}}
	\put(4.06,30.37){\line(1,0){16}}
	\multiput(52.56,30.37)(-.125,.041667){6}{\line(-1,0){.125}}
	\multiput(20.06,30.37)(-.125,.041667){6}{\line(-1,0){.125}}
	\put(51.81,30.62){\line(1,0){.25}}
	\put(19.31,30.62){\line(1,0){.25}}
	\multiput(19.31,17.62)(-.058490566,.0481132075){265}{\line(-1,0){.058490566}}
	\put(36.56,30.37){\line(0,-1){12.75}}
	\put(4.06,30.37){\line(0,-1){12.75}}
	\qbezier(36.81,29.87)(44.31,42.87)(51.81,30.87)
	\multiput(19.31,30.62)(-.0555555556,-.0481481481){270}{\line(-1,0){.0555555556}}
	\qbezier(36.81,17.62)(43.31,5.25)(51.81,17.37)
	\put(45.5,6){\makebox(0,0)[cc]{$G_{11}$}}
	\put(13,6){\makebox(0,0)[cc]{$G_{10}$}}
	\bezier{579}(36.5,30.25)(25.75,23)(37,17.75)
	\put(44.5,39){\makebox(0,0)[cc]{}}
	\put(33,23.5){\makebox(0,0)[cc]{}}
	\put(43.75,14){\makebox(0,0)[cc]{}}
	\put(11.25,14.25){\makebox(0,0)[cc]{}}
	\bezier{501}(19.25,30.75)(22.25,42.875)(29.25,36.5)
	\bezier{493}(29.25,36.5)(32.625,28.875)(19.5,30.75)
	\put(24.75,27.5){\makebox(0,0)[cc]{}}
	\put(11.75,32.75){\makebox(0,0)[cc]{}}
	\put(70.935,32.495){\circle*{1.33}}
	\put(61.185,17.995){\circle*{1.33}}
	\put(79.685,17.495){\circle*{1.33}}
	\put(70.375,6){\makebox(0,0)[cc]{$G_{12}$}}
	\multiput(61.125,17.875)(.048029557,.072660099){203}{\line(0,1){.072660099}}
	\multiput(70.875,32.625)(.048022599,-.086158192){177}{\line(0,-1){.086158192}}
	\multiput(79.375,17.375)(-3.083333,.041667){6}{\line(-1,0){3.083333}}
	\qbezier(70.875,32.625)(84,31.125)(79.625,17.625)
	\qbezier(70.875,32.625)(58.75,31.375)(61.125,18.125)
	\qbezier(61.125,18.125)(69,7)(79.375,17.375)
	\qbezier(79.375,17.875)(80.375,5.75)(88.375,11.125)
	\qbezier(88.375,10.625)(88.875,18.625)(79.375,17.625)
	\put(99.06,16.62){\circle*{1.33}}
	\put(114.06,16.62){\circle*{1.33}}
	\put(114.06,29.62){\circle*{1.33}}
	\put(114.06,29.62){\line(0,-1){13}}
	\put(114.06,16.62){\line(-1,0){15}}
	\qbezier(99.06,16.62)(99.56,17.25)(99.06,16.37)
	\qbezier(99.06,16.37)(98.94,17.37)(99.31,16.37)
	\qbezier(99.31,16.37)(98.44,17.62)(99.06,16.87)
	\put(98.56,29.12){\circle*{1.33}}
	\put(98.56,29.62){\line(1,0){16}}
	\multiput(114.56,29.62)(-.125,.041667){6}{\line(-1,0){.125}}
	\put(113.81,29.87){\line(1,0){.25}}
	\put(98.56,29.62){\line(0,-1){12.75}}
	\qbezier(98.81,29.12)(106.31,42.12)(113.81,30.12)
	\put(107.25,6){\makebox(0,0)[cc]{$G_{13}$}}
	\bezier{579}(98.5,29.5)(87.75,22.25)(99,17)
	\put(98.75,16.5){\line(-1,0){.75}}
	\multiput(98,16.5)(.0592592593,.0481481481){270}{\line(1,0){.0592592593}}
	\qbezier(98.5,28.75)(99.75,17.625)(114,17)
	\qbezier(98.5,29.25)(110.375,28.375)(113.75,17)
	\put(131.665,16.37){\circle*{1.33}}
	\put(146.665,16.37){\circle*{1.33}}
	\put(146.665,32.37){\circle*{1.33}}
	\put(131.665,32.37){\circle*{1.33}}
	\put(157.165,24.62){\circle*{1.33}}
	\put(119.415,24.87){\circle*{1.33}}
	\put(146.665,16.37){\line(-1,0){15}}
	\qbezier(131.665,16.37)(132.165,17)(131.665,16.12)
	\qbezier(131.665,16.12)(131.545,17.12)(131.915,16.12)
	\qbezier(131.915,16.12)(131.045,17.37)(131.665,16.62)
	\put(139.355,6){\makebox(0,0)[cc]{$G_{14}$}}
	\multiput(119.355,25)(.081125828,.048013245){151}{\line(1,0){.081125828}}
	\multiput(119.105,24.75)(.06920904,-.048022599){177}{\line(1,0){.06920904}}
	\put(131.605,32.5){\line(1,0){14.75}}
	\multiput(146.355,32.5)(.064759036,-.048192771){166}{\line(1,0){.064759036}}
	\multiput(157.105,24.75)(-.060734463,-.048022599){177}{\line(-1,0){.060734463}}
	\multiput(131.605,32.25)(.0480769231,-.0512820513){312}{\line(0,-1){.0512820513}}
	\put(146.355,32.25){\line(0,-1){16}}
	\multiput(131.605,16)(.142655367,.048022599){177}{\line(1,0){.142655367}}
	\multiput(119.355,25.25)(.148351648,-.048076923){182}{\line(1,0){.148351648}}
	\put(133.355,27.25){\makebox(0,0)[cc]{}}
	\put(122.105,19.75){\makebox(0,0)[cc]{}}
	\put(153.355,18.5){\makebox(0,0)[cc]{}}
	\end{picture}
	
\end{center}
\vskip.2cm
\begin{proof}  Suppose  $N\backslash a\cong F^*_7$ and $N/a$ is isomorphic to  $M(G)$ for some connected graph $G$. 	If $a$ is a loop or coloop of $N$, then $N /a = N \backslash a \cong F^*_7$, a contradiction to the fact that $N/a$ is graphic.  Hence $a$ belongs to a circuit of $N$ of size greater than one. 	Since  $F^*_7$ has rank 4 and it contains 7 elements, the graph $G$ has $4$ vertices and $7$ edges.  By Lemma \ref{c2slemma}(i), $G$ is $2$-connected or has  two blocks  one of which is a loop. 
	
	\vskip.5cm
	\noindent {\bf Case (i).} Suppose  $G$ does not contain a loop. 
	
	Then $G$ is 2-connected. By Lemma \ref{c2slemma}(iii) and (iv), $G$ does not have a vertex of degree two and   more than two edges with same pair of end vertices.  Hence  $G$ can be obtained from   $H_i$ by adding one parallel edge, or from $H_{ii}$  by adding two parallel edges or from $H_{iii}$  by adding three parallel edges, where $H_i,$ $H_{ii}$ and $H_{iii}$ are the graphs as shown in Figure 4.

	Suppose $G$ is obtained from $H_{i}$ or $H_{ii}.$  Then $N$ contains a $3$-circuit or a $2$-circuit without containing $a$.   Therefore $N\backslash a$ contains a $3$-circuit or  a $2$-circuit, which a contradiction to the fact that  $F^*_7$ is a $3$-connected bipartite matroid.  If $G$ is obtained from $H_{iii},$ then  it is isomorphic to the graph $G_{11}.$  
	
	\vskip.5cm
	\noindent	{\bf Case (ii).} 	Suppose $G$ contains a loop.
	
	Then $G$ can be obtained from $H_i$ by adding a loop, or from $H_{ii}$ by adding a loop and a parallel edge or from $H_{iii}$ by adding  a loop and  two parallel edges.  If  $G$ is  obtained $H_{ii}$ or $H_{iii}, $ then $G$ contains a $2$-edge cut, a contradiction to  Lemma \ref{c2slemma}(iv). Hence  $G$ is obtained from $H_i. $ In this case, $G$  is isomorphic to the graph $G_{10}.$    
\end{proof} 

\begin{center}
	\unitlength .7mm 
	\linethickness{0.4pt}
	\ifx\plotpoint\undefined\newsavebox{\plotpoint}\fi 
	\begin{picture}(71.475,33.285)(0,0)
	\put(6.06,19.62){\circle*{1.33}}
	\put(31.31,19.62){\circle*{1.33}}
	\put(55.81,19.37){\circle*{1.33}}
	\put(21.06,19.62){\circle*{1.33}}
	\put(46.31,19.62){\circle*{1.33}}
	\put(70.81,19.37){\circle*{1.33}}
	\put(21.06,32.62){\circle*{1.33}}
	\put(46.31,32.62){\circle*{1.33}}
	\put(70.81,32.37){\circle*{1.33}}
	\put(21.06,32.62){\line(0,-1){13}}
	\put(46.31,32.62){\line(0,-1){13}}
	\put(70.81,32.37){\line(0,-1){13}}
	\put(21.06,19.62){\line(-1,0){15}}
	\put(46.31,19.62){\line(-1,0){15}}
	\put(70.81,19.37){\line(-1,0){15}}
	\qbezier(6.06,19.62)(6.56,20.25)(6.06,19.37)
	\qbezier(31.31,19.62)(31.81,20.25)(31.31,19.37)
	\qbezier(55.81,19.37)(56.31,20)(55.81,19.12)
	\qbezier(6.06,19.37)(5.94,20.37)(6.31,19.37)
	\qbezier(31.31,19.37)(31.19,20.37)(31.56,19.37)
	\qbezier(55.81,19.12)(55.69,20.12)(56.06,19.12)
	\qbezier(6.31,19.37)(5.44,20.62)(6.06,19.87)
	\qbezier(31.56,19.37)(30.69,20.62)(31.31,19.87)
	\qbezier(56.06,19.12)(55.19,20.37)(55.81,19.62)
	\put(5.56,32.12){\circle*{1.33}}
	\put(30.81,32.12){\circle*{1.33}}
	\put(55.31,31.87){\circle*{1.33}}
	\put(5.56,32.62){\line(1,0){16}}
	\put(30.81,32.62){\line(1,0){16}}
	\put(55.31,32.37){\line(1,0){16}}
	\multiput(21.56,32.62)(-.09375,.03125){8}{\line(-1,0){.09375}}
	\multiput(46.81,32.62)(-.09375,.03125){8}{\line(-1,0){.09375}}
	\multiput(71.31,32.37)(-.09375,.03125){8}{\line(-1,0){.09375}}
	\put(20.81,32.87){\line(1,0){.25}}
	\put(46.06,32.87){\line(1,0){.25}}
	\put(70.56,32.62){\line(1,0){.25}}
	\multiput(20.81,19.87)(-.041005291,.0337301587){378}{\line(-1,0){.041005291}}
	\multiput(46.06,19.87)(-.041005291,.0337301587){378}{\line(-1,0){.041005291}}
	\put(5.56,32.62){\line(0,-1){12.75}}
	\put(30.81,32.62){\line(0,-1){12.75}}
	\put(55.31,32.37){\line(0,-1){12.75}}
	\multiput(20.81,32.87)(-.0388601036,-.0336787565){386}{\line(-1,0){.0388601036}}
	\put(38.5,5){\makebox(0,0)[cc]{Figure 4}}
	\put(14.5,12.5){\makebox(0,0)[cc]{$H_i$}}
	\put(39.75,12.5){\makebox(0,0)[cc]{$H_{ii}$}}
	\put(64.25,12.25){\makebox(0,0)[cc]{$H_{iii}$}}
	\end{picture}
\end{center}

We now determine graphic elementary quotients of the matroid $F_7$.

\begin{lemma} \label{c2mff7}
	Let $N$ be a binary matroid and $a \in E(N)$. If $N\backslash a \cong F_7$ and $N/a$ is a graphic matroid, then $N/a$ is  isomorphic to the circuit matroid of $G_{12}$, where $G_{12}$ is the graph as shown in Figure 3.
\end{lemma}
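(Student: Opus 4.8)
The plan is to follow the same scheme as the proof of Lemma~\ref{c2mff*7}, the only structural difference being that $F_7$ has rank~$3$ (rather than~$4$) and, since the cocircuits of $F_7$ all have size~$4$, its dual $F_7^*$ is bipartite, so $F_7$ is an Eulerian matroid. First I would observe that $a$ is neither a loop nor a coloop of $N$: otherwise $N/a = N\backslash a \cong F_7$, contradicting that $N/a$ is graphic. Hence $r(N) = r(N\backslash a) = 3$ and $r(N/a) = 2$, while $|E(N/a)| = |E(N)| - 1 = 7$. Writing $N/a = M(G)$ for a connected graph $G$ (as noted in the proof of Lemma~\ref{c2slemma}, $N$ is connected because $N\backslash a$ is; if $N/a$ happens to be disconnected, the argument below applies to the nontrivial component and yields the same matroid), the graph $G$ has exactly $3$ vertices and $7$ edges. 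From Lemma~\ref{c2slemma} I then record: $G$ is a block, or a block together with a loop (part~(i)); $G$ has at most one loop (part~(ii)); no parallel class of $G$ has more than two edges (part~(iii)); $G$ has no $2$-edge cut (part~(iv)); and $G$ is Eulerian (part~(v)).

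The core of the argument is a one-line count. A graph on three vertices has only three parallel classes of non-loop edges, each of size at most~$2$ by Lemma~\ref{c2slemma}(iii), hence at most $6$ non-loop edges; together with at most one loop (Lemma~\ref{c2slemma}(ii)) this accounts for at most $7$ edges, so equality must hold throughout. Thus $G$ has exactly one loop and exactly two edges joining each pair of distinct vertices, which determines $G$ uniquely up to isomorphism: it is $K_3$ with every edge doubled plus a single loop, that is, the graph $G_{12}$ of Figure~3. To close, I would verify that this $G$ is indeed consistent with all parts of Lemma~\ref{c2slemma} --- its two blocks are the $2$-connected multigraph on the three vertices and the loop, every vertex has even degree, and its minimum edge cut has size~$4$ --- and conclude $N/a \cong M(G_{12})$.

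I do not expect a real obstacle here. The only points requiring a little care are routine: checking that $a$ is neither a loop nor a coloop (so that the rank count $r(N/a)=2$, and hence $|V(G)|=3$, is valid) and handling the possibility that $N/a$ is disconnected, which changes nothing since a loop may be moved to any vertex without affecting the matroid. In contrast with the $F_7^*$ case of Lemma~\ref{c2mff*7}, there are no near-miss graphs to rule out by a separate $2$- or $3$-circuit argument: the bound $7 > 6$ on the edge count already forces the loop to be present and then rigidly fixes the remaining structure.
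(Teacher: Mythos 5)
Your argument is correct, but it takes a genuinely different route from the paper's. The paper exploits the fact that $F_7$ fills the binary projective plane $PG(2,2)$: a binary single-element extension of $F_7$ can only add a loop or an element parallel to an existing one, the loop case is ruled out because $N/a$ would then be $F_7$, so $\{a,x\}$ is a $2$-circuit of $N$ for some $x$ (the paper writes ``2-cocircuit'' where it means ``2-circuit''); hence $x$ is a loop of $N/a$ and $N/a\backslash x\cong F_7/x$, which is the fat triangle, and restoring the loop gives $G_{12}$. You instead run the same template as Lemma \ref{c2mff*7}: after checking that $a$ is neither a loop nor a coloop, the rank and size counts give $|V(G)|=3$ and $|E(G)|=7$, and parts (ii) and (iii) of Lemma \ref{c2slemma} cap the edge count at $3\cdot 2+1=7$, so equality forces $G$ to be the doubled triangle with one loop, i.e.\ $G_{12}$. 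Both arguments are sound and complete. The paper's is shorter and in fact pins down $N$ itself (it is $F_7$ with one parallel element adjoined), whereas yours is more uniform with the treatment of the other members of $\mathcal{F}$ and uses nothing about $F_7$ beyond its rank, size, and $3$-connectivity; your remarks that $F_7$ is Eulerian and that $G$ may be taken connected are correct but serve only as consistency checks, since the counting already determines $G$ uniquely.
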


\begin{proof} 
	
	Since  $N$ is a binary extension of $F_7$ by the element $a$, it follows from the matrix representation of $F_7$ that there is a 2-cocircuit $\{x,a\}$ in $N$ for some $x \in E(N)$.  Therefore  $x$ is a loop in $N /a.$   
	Hence $N /a \backslash x  =   \cong (N\backslash a) /x \cong  F_7 / x$.  However, the matroid  $F_7/x$ is isomorphic to the circuit matroid of a fat triangle,  a graph obtained from a triangle by adding three edges parallel to three edges of the triangle. Hence $G$ is obtained by adding a loop to a fat triangle and thus it is isomorphic to the graph $G_{12}.$ 
\end{proof}

By Proposition \ref{c2exten2}, $M(G_6)$ and $M(G_7)$ are the forbidden minors for the graphic splitting matroid $M_T$ of graphic matroids $M$ where $|T|=3$.
  

In the next two lemmas, we find graphic quotients of $M^*(K_{3,3})$ and $M^*(K_{5})$ which avoids $M(G_6)$ and $M(G_7)$.

\begin{lemma} \label{c2mfm*33} 
	Let $N$ be a binary matroid with  $a \in E(N)$ and $N/ a$ does not contain a minor isomorphic to $M(G_6)$ or $M(G_7)$.  If  $N\backslash a \cong M^*(K_{3,3})$ and $N/a$ is a graphic matroid, then  $N/a$ is  isomorphic to the circuit matroid of $G_{13}$, where $G_{13}$ is the graph as shown in Figure 3.
\end{lemma}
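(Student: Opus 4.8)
The idea is: (1) use Lemma~\ref{c2slemma} together with the fact that $M^*(K_{3,3})$ is Eulerian to force $N/a=M(G)$ for a very constrained multigraph $G$; (2) enumerate the finitely many such $G$; (3) discard the impostors using the hypothesis on $M(G_6),M(G_7)$ and a short computation inside a fixed binary representation of $M^*(K_{3,3})$, leaving $G\cong G_{13}$.

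For step~(1): since $N/a$ is graphic while $M^*(K_{3,3})$ is not (Theorem~\ref{c1cgm}), the element $a$ is neither a loop nor a coloop of $N$, so $r(N/a)=r(N\backslash a)-1=3$ and $|E(N/a)|=9$; thus $N/a=M(G)$ for a connected graph $G$ with $4$ vertices and $9$ edges. Because $K_{3,3}$ is bipartite, each of its cycles is even, so $M(K_{3,3})$ is a bipartite matroid and hence $M^*(K_{3,3})$ is Eulerian; by Lemma~\ref{c2slemma}(v), $G$ is an Eulerian graph, i.e.\ every vertex has even degree. By Lemma~\ref{c2slemma}(i)--(iv), $G$ is a block (hence $2$-connected) or a $2$-connected block together with one loop; moreover $G$ has at most one loop, no triple edge, and no $2$-edge cut.

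For step~(2): writing $m_{ij}$ for the number of edges between $i$ and $j$, evenness of all degrees forces, for each of the three perfect matchings $\{12,34\},\{13,24\},\{14,23\}$ of the vertex set, the two multiplicities in that matching to have the same parity when $\sum_{i<j}m_{ij}$ is even (i.e.\ $8$, so $G$ has a loop) and opposite parity when it is odd (i.e.\ $9$, so $G$ is loopless). Combined with $0\le m_{ij}\le 2$ and $2$-connectedness, a direct check leaves exactly three graphs: (a) $K_4$ with the three edges at one vertex doubled ($9$ edges, loopless); (b) $K_4$ with two non-adjacent edges doubled, plus one loop; (c) the $4$-cycle with each of its four edges doubled, plus one loop. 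The no-$2$-edge-cut condition holds in all three, so it prunes nothing further.

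For step~(3): in case~(b), deleting the two supernumerary parallel edges yields $M(K_4)\oplus(\text{loop})$; since the matrix for $M(G_6)$ in the proof of Proposition~\ref{c2exten2} has the column labelled $z$ equal to $0$ and its other six columns equal to $F_7$ minus a point, i.e.\ $M(K_4)$, we get $M(G_6)\cong M(K_4)\oplus(\text{loop})$, so $M(G)$ has an $M(G_6)$-minor, contradicting the hypothesis. In case~(c), $M(G)$ has four $2$-element parallel classes, each giving a $2$-circuit $\{x,y\}$ of $N/a$ and hence a circuit $\{x,y,a\}$ of $N$; in a fixed binary representation this means the columns of $x$ and $y$ sum to that of $a$. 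But taking $[I_5\mid D]$ to be a tree representation of $M(K_{3,3})$, so that $[D^T\mid I_4]$ represents $M^*(K_{3,3})$, one checks that at most three unordered pairs of columns sum to any fixed vector of $GF(2)^4$: the six triangles of $M^*(K_{3,3})$, each element lying in exactly two of them, account for all pairs summing to a nonzero vector, and only two pairs sum to a column vector. Four disjoint such pairs therefore cannot exist, so (c) is impossible. Hence $G\cong G_{13}$, the graph of~(a); and conversely this graph does occur, for instance by adjoining to $[D^T\mid I_4]$ the column $(1,1,1,0)^T$ and contracting it.

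\textbf{Expected main obstacle.} The reduction in step~(1) is routine; the real work is the enumeration in~(2) and especially the elimination of case~(c). The forbidden-minor hypothesis is of no use there, since the doubled $4$-cycle has no $M(K_4)$-minor and hence no $M(G_6)$- or $M(G_7)$-minor; one is forced to exploit the specific structure of $M^*(K_{3,3})$—equivalently, to bound the number of $3$-circuits through a prospective new element—and getting this count right, while tracking which pairs of columns are ``triangle pairs'' and which are not, is the delicate point.
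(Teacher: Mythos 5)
Your reduction and enumeration follow the paper's route: both arguments force $N/a=M(G)$ for a connected Eulerian multigraph $G$ on $4$ vertices and $9$ edges with at most one loop, no triple edge and no $2$-edge cut, and both arrive at the same three candidates (your (a), (b), (c) are $G_{13}$, the paper's $H_{iv}$ plus a loop, and $H_v$ plus a loop). Case (b) you also eliminate exactly as the paper does, by exhibiting an $M(G_6)$-minor.

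The divergence, and the error, is in case (c). You claim the doubled $4$-cycle has no $M(G_6)$- or $M(G_7)$-minor ``since it has no $M(K_4)$-minor.'' But $G_7$ does not contain $K_4$: in the proof of Lemma \ref{c2mff*7}, Case (i), the graph $G_{11}=G_7$ is obtained from the $4$-cycle $H_{iii}$ by adding three parallel edges, so $G_7$ is the $4$-cycle with three of its four edges doubled --- a series-parallel multigraph. Deleting the loop and one edge of one doubled pair from your graph (c) yields exactly $G_7$, so in case (c) the matroid $N/a$ has an $M(G_7)$-minor and the hypothesis disposes of it at once; this is precisely what the paper does (``$H_v$ contains a minor isomorphic to $G_7$''). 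Your substitute argument is thus unnecessary, and as written it is also incomplete: the assertion that at most three unordered pairs of columns of $M^*(K_{3,3})$ sum to any fixed vector of $GF(2)^4$ is true, but your justification via the six triangles only accounts for the $18$ pairs whose sum is itself one of the nine columns (two pairs per column). For the six non-column target vectors you give no bound at all, and a priori a $9$-element subset of $PG(3,2)$ can contain four of the seven complementary pairs for a fixed target. The bookkeeping $\binom{9}{2}=36=9\cdot 2+6\cdot 3$ only gives an average of three for the non-column targets; to exclude a target hit four times one must still compute (or invoke transitivity of the automorphism group on the six non-column points). So the conclusion of your case (c) is correct, but the justification has a gap, and the intended, much shorter, route is the forbidden-minor one you declared unusable.
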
 

\begin{proof} Suppose $N/a=M(G)$ for some connected graph $G$. Since $M^*(K_{3,3,})$ has rank $4$ and 9 elements,  $N$ has rank $4$ and 10 elements.  Hence $G$ has $4$ vertices and $9$ edges.  Since $K_{3,3}$ is a bipartite graph, $M^*(K_{3,3})$ is an Eulerian matroid. Hence,  by Lemma \ref{c2slemma}(v), $G$ is an Eulerian graph. 		By Lemma \ref{c2slemma}(i), $G$ is $2$-connected or it is a block plus a loop.

	\vskip.5cm
	\noindent \textbf{{Case (i).}} Suppose $G$ contains a loop. 
	
	Then the block of  $G$ other than a loop must be an Eulerian graph on $4$ vertices and $8$ edges.   By Harary [\cite{harary6graph}, p.230],  every 2-connected Eulerian graph with 4 vertices and 8 edges is isomorphic to   $H_{iv}$ or $H_{v},$ where $H_{iv}$ or $H_{v}$ are the graphs as shown in Figure 5. 
	\begin{center} 
		\unitlength 0.65mm 
		\linethickness{0.4pt}
		\ifx\plotpoint\undefined\newsavebox{\plotpoint}\fi 
		\begin{picture}(64.375,45)(0,0)
		\put(39.165,21.87){\circle*{1.33}}
		\put(2.415,21.87){\circle*{1.33}}
		\put(54.165,21.87){\circle*{1.33}}
		\put(17.415,21.87){\circle*{1.33}}
		\put(54.165,34.87){\circle*{1.33}}
		\put(17.415,34.87){\circle*{1.33}}
		\put(54.165,34.87){\line(0,-1){13}}
		\put(17.415,34.87){\line(0,-1){13}}
		\put(54.165,21.87){\line(-1,0){15}}
		\put(17.415,21.87){\line(-1,0){15}}
		\qbezier(39.165,21.87)(39.665,22.5)(39.165,21.62)
		\qbezier(2.415,21.87)(2.915,22.5)(2.415,21.62)
		\qbezier(39.165,21.62)(39.045,22.62)(39.415,21.62)
		\qbezier(2.415,21.62)(2.295,22.62)(2.665,21.62)
		\qbezier(39.415,21.62)(38.545,22.87)(39.165,22.12)
		\qbezier(2.665,21.62)(1.795,22.87)(2.415,22.12)
		\put(38.665,34.37){\circle*{1.33}}
		\put(1.915,34.37){\circle*{1.33}}
		\put(38.665,34.87){\line(1,0){16}}
		\put(1.915,34.87){\line(1,0){16}}
		\multiput(54.665,34.87)(-.09375,.03125){8}{\line(-1,0){.09375}}
		\multiput(17.915,34.87)(-.09375,.03125){8}{\line(-1,0){.09375}}
		\put(53.915,35.12){\line(1,0){.25}}
		\put(17.165,35.12){\line(1,0){.25}}
		\multiput(17.165,22.12)(-.041005291,.0337301587){378}{\line(-1,0){.041005291}}
		\put(38.665,34.87){\line(0,-1){12.75}}
		\put(1.915,34.87){\line(0,-1){12.75}}
		\multiput(17.165,35.12)(-.0388601036,-.0336787565){386}{\line(-1,0){.0388601036}}
		\put(10.355,10.75){\makebox(0,0)[cc]{$H_{iv}$}}
		\qbezier(38.355,34.75)(43.73,45)(53.605,35.25)
		\qbezier(1.605,34.75)(6.98,45)(16.855,35.25)
		\qbezier(38.855,21.75)(46.23,12.625)(54.105,22)
		\qbezier(2.105,21.75)(9.48,12.625)(17.355,22)
		\put(28.5,3){\makebox(0,0)[cc]{Figure 5}}
		\bezier{542}(38.5,34.5)(28.75,27.375)(39,21.75)
		\bezier{555}(54.25,35)(64.375,27.5)(54,22)
		\put(45,11.25){\makebox(0,0)[cc]{$H_{v}$}}
		\end{picture}
		
	\end{center}

	However, $H_{v}$ contains a minor isomorphic to the graph $G_{7}$ while  $H_{iv}$  plus a loop contains  a minor isomorphic to $G_{6}$.  Hence $G$ does not arise from these graphs.
	
		\vskip.5cm
	\noindent \textbf{{Case (ii).}} Suppose  $G$ is   $2$-connected.

	By Harary [\cite{harary6graph}, pp. 230], every Eulerian graph with  $4$ vertices and $9$ edges is isomorphic to the graph $G_{13}.$ Hence $G$ is isomorphic to the graph $G_{13}.$  Thus $N/a \cong M(G_{13})$.
\end{proof}

\begin{lemma} \label{c2mfm*5}
	Let $N$ be a binary matroid and $a \in E(N)$ and $N$ does not contain a minor isomorphic to $M(G_6)$ or $M(G_7)$. If $N\backslash a \cong M^*(K_5)$ and $N/a$ is a graphic matroid, then $N/a\cong M(G_{14})$, where $G_{14}$ is the graph as shown in Figure 3.
\end{lemma}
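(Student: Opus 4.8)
The plan is to proceed exactly as in the proof of Lemma \ref{c2mfm*33}. Write $N/a = M(G)$ for a connected graph $G$. As at the start of the proof of Lemma \ref{c2slemma}, $a$ lies in a circuit of $N$ of size at least two, so $r(N) = r(N\backslash a) = r(M^*(K_5)) = 6$ and $r(N/a) = 5$; since $|E(N)| = 11$, the graph $G$ has $6$ vertices and $10$ edges. By Lemma \ref{c2slemma}(i), $G$ is $2$-connected or is the union of a block with a loop; by parts (iii) and (iv), that block (or $G$ itself) has no two vertices joined by three or more edges and no $2$-edge cut, so in particular its minimum degree is at least $3$. Note that Lemma \ref{c2slemma}(v) gives no information here, since $K_5$ is not bipartite and hence $M^*(K_5)$ is not an Eulerian matroid.

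If $G$ contains a loop, then $N/a$ has a loop $\ell$; since $a$ is not a loop of $N$ and $\ell$ is not a loop of $N\backslash a\cong M^*(K_5)$, it follows that $\{a,\ell\}$ is a circuit of $N$. Deleting $\ell$ from $N$ therefore yields a matroid isomorphic to $M^*(K_5)$ with $a$ in the role of $\ell$, so that $N/a\backslash\ell = (N\backslash\ell)/a \cong M^*(K_5)/e = (M(K_5)\backslash e)^* = M^*(K_5 - e)$ for an edge $e$ of $K_5$. Now $K_5 - e$ is a maximal planar graph ($5$ vertices, $9 = 3\cdot 5 - 6$ edges, hence a triangulation), so $M^*(K_5 - e)$ is the circuit matroid of the planar dual of $K_5 - e$; that dual is the triangular prism $K_3\,\square\,K_2$, because the only connected simple cubic graphs on $6$ vertices are $K_{3,3}$ and the prism and $K_{3,3}$ is non-planar. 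Hence $N/a$ is the circuit matroid of the prism with a loop adjoined, i.e. $N/a\cong M(G_{14})$; the choice of $e$ and of the vertex carrying the loop is immaterial, by the edge-transitivity of $K_5$ and the vertex-transitivity of the prism.

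It remains to rule out the loopless case, which is the heart of the argument. Here $G$ is $2$-connected with $6$ vertices, $10$ edges, minimum degree at least $3$, no $2$-edge cut, and at most two edges between any pair of vertices; its degree sequence is $(4,4,3,3,3,3)$ or $(5,3,3,3,3,3)$. The plan is to enumerate, up to isomorphism, the finitely many graphs meeting these constraints — among them the wheel $W_5$, $K_{3,3}$ with an extra edge, the prism with an extra edge (in its inequivalent ways), and the octahedron with a two‑edge matching deleted — and to show that none of them can occur. For each candidate $G$ I would write down a binary representation of $M(G)$ and determine the binary single-element coextensions $N$ with $N\backslash a\cong M^*(K_5)$; dually this amounts to describing the single-element coextensions $N^*$ of $M(K_5)$ with $N^*\backslash a = M^*(G)$ cographic, which are controlled by the even subgraphs of $K_5$ (only seven orbits under $\mathrm{Aut}(K_5) = S_5$, one of which is the coloop case and one of which is the ``parallel'' case producing the loop above). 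For each remaining candidate one then checks either that $N\backslash a\cong M^*(K_5)$ is impossible, or that $N$ itself contains a minor isomorphic to $M(G_6)$ or $M(G_7)$, contradicting the hypothesis.

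The main obstacle is precisely this last verification. Without the Eulerian restriction the list of candidate graphs is appreciably longer than in the $M^*(K_{3,3})$ lemma, and for the graphs that survive the degree and connectivity filter — most notably $W_5$ — the quotient $N/a$ itself need not contain $M(G_6)$ or $M(G_7)$, so one must locate such a minor in the ambient matroid $N$. This is exactly where the hypothesis that $N$, rather than merely $N/a$, avoids $M(G_6)$ and $M(G_7)$ is used, and carrying it out is a finite but somewhat delicate computation with the matrix representations; I expect this to be the longest and most error-prone part of the proof.
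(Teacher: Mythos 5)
Your treatment of the loop case contains the decisive error, and it inverts the structure of the whole argument. Your derivation that a loop $\ell$ forces $\{a,\ell\}$ to be a circuit of $N$, hence $G-\ell\cong M^*(K_5-e)\cong$ the prism, is correct and even elegant; but the prism with a loop adjoined is \emph{not} $G_{14}$. The graph $G_{14}$ is the same as $G_8$ of Figure 2 (this identification is what the main theorem's Case (iii) relies on), and it is a \emph{simple} $2$-connected graph on $6$ vertices with $10$ edges --- it is $H_{vi}$, one of the four simple candidates of Figure 6 arising in the loopless case. So the loop case must be \emph{discarded}, not accepted: one has to show that the prism plus a loop contains $M(G_6)$ (or $M(G_7)$) as a minor, contradicting the hypothesis on $N$; this is exactly what the paper's Case (ii) does when it checks that each surviving $9$-edge graph acquires a $G_6$-minor upon adding a loop. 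Symmetrically, your plan to ``rule out the loopless case'' entirely cannot succeed: Proposition \ref{c2exten2} exhibits $M(G_8)_{T_8}\cong M^*(K_5)$, so the coextension $M(G_8)'_{T_8}$ is a matroid $N$ with $N\backslash a\cong M^*(K_5)$ and $N/a\cong M(G_{14})$, i.e. the simple candidate $H_{vi}$ genuinely occurs and is the intended conclusion of the lemma. As written, your proof would establish a different (and, given the rest of the paper, false) statement.

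Two further points. First, among the four simple loopless candidates, the paper eliminates two by finding $G_7$-minors and the third ($H_{ix}$) by observing that any such $N$ would contain an odd circuit avoiding $a$, whence $N\backslash a\cong M^*(K_5)$ would contain an odd circuit --- impossible because every circuit of $M^*(K_5)$ is an edge cut of $K_5$ and hence even. You dismiss the parity tool too quickly: Lemma \ref{c2slemma}(v) indeed does not apply ($M^*(K_5)$ is not Eulerian), but the \emph{bipartiteness} of $M^*(K_5)$ is still available and is what kills $H_{ix}$; your alternative of hunting for $G_6$/$G_7$ minors inside the ambient $N$ for a surviving candidate such as $W_5$ is not how the elimination goes. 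Second, the bulk of the paper's proof (Cases (ii)--(iv)) is the enumeration of the \emph{non-simple} loopless candidates built from simple graphs with $9$, $8$ or $7$ edges by doubling edges; your sketch acknowledges these exist but does not engage with them, and they cannot be absorbed into the ``degree sequence $(4,4,3,3,3,3)$ or $(5,3,3,3,3,3)$'' list of simple graphs you name.
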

\begin{proof} 
	
	Since $N/a$ is graphic, $N/a=M(G)$ for a connected graph $G$.
	As $M^*(K_5)$ has  rank $6$ and  $10$ elements,  $N$ has rank  $6$ and $11$ elements. Hence the graph $G$ has $6$ vertices and $10$ edges.
	By Lemma \ref{c2slemma}(i), $G$ is 2-connected or it is a block plus a loop and further, by Lemma \ref{c2slemma}(iv), $G$ does not contains a $2$-edge cut.

	\vskip.5cm\noindent
	{\bf Case (i).} Suppose  $G$ is a simple $2$-connected graph.
	
	By Harary [\cite{harary6graph}, pp. 223], there are $12$ non-isomorphic $2$-connected graphs with 6 vertices and 10 edges. Out of these 12 graphs, 8 graphs contain a vertex of degree $2$ each and so, by Lemma \ref{c2slemma} (iv),  we discard them. The remaining four graphs are as shown in Figure 6.

	\vskip.5cm
	\begin{center}
		\unitlength 0.65mm 
		\linethickness{0.4pt}
		\ifx\plotpoint\undefined\newsavebox{\plotpoint}\fi 
		\begin{picture}(165.475,38.035)(0,0)
		\put(12.31,21.37){\circle*{1.33}}
		\put(55.31,21.12){\circle*{1.33}}
		\put(97.81,21.37){\circle*{1.33}}
		\put(139.31,21.37){\circle*{1.33}}
		\put(27.31,21.37){\circle*{1.33}}
		\put(70.31,21.12){\circle*{1.33}}
		\put(112.81,21.37){\circle*{1.33}}
		\put(154.31,21.37){\circle*{1.33}}
		\put(27.31,37.37){\circle*{1.33}}
		\put(70.31,37.12){\circle*{1.33}}
		\put(112.81,37.37){\circle*{1.33}}
		\put(154.31,37.37){\circle*{1.33}}
		\put(12.31,37.37){\circle*{1.33}}
		\put(55.31,37.12){\circle*{1.33}}
		\put(97.81,37.37){\circle*{1.33}}
		\put(139.31,37.37){\circle*{1.33}}
		\put(37.81,29.62){\circle*{1.33}}
		\put(80.81,29.37){\circle*{1.33}}
		\put(123.31,29.62){\circle*{1.33}}
		\put(164.81,29.62){\circle*{1.33}}
		\put(.06,29.87){\circle*{1.33}}
		\put(43.06,29.62){\circle*{1.33}}
		\put(85.56,29.87){\circle*{1.33}}
		\put(127.06,29.87){\circle*{1.33}}
		\put(27.31,21.37){\line(-1,0){15}}
		\put(70.31,21.12){\line(-1,0){15}}
		\put(112.81,21.37){\line(-1,0){15}}
		\put(154.31,21.37){\line(-1,0){15}}
		\qbezier(12.31,21.37)(12.81,22)(12.31,21.12)
		\qbezier(55.31,21.12)(55.81,21.75)(55.31,20.87)
		\qbezier(97.81,21.37)(98.31,22)(97.81,21.12)
		\qbezier(139.31,21.37)(139.81,22)(139.31,21.12)
		\qbezier(12.31,21.12)(12.19,22.12)(12.56,21.12)
		\qbezier(55.31,20.87)(55.19,21.87)(55.56,20.87)
		\qbezier(97.81,21.12)(97.69,22.12)(98.06,21.12)
		\qbezier(139.31,21.12)(139.19,22.12)(139.56,21.12)
		\qbezier(12.56,21.12)(11.69,22.37)(12.31,21.62)
		\qbezier(55.56,20.87)(54.69,22.12)(55.31,21.37)
		\qbezier(98.06,21.12)(97.19,22.37)(97.81,21.62)
		\qbezier(139.56,21.12)(138.69,22.37)(139.31,21.62)
		\put(83.5,-1.25){\makebox(0,0)[cc]{Figure 6}}
		\put(20.25,10.5){\makebox(0,0)[cc]{$H_{vi}$}}
		\put(63.25,10.25){\makebox(0,0)[cc]{$H_{vii}$}}
		\put(105.75,10.5){\makebox(0,0)[cc]{$H_{viii}$}}
		\put(147.25,10.5){\makebox(0,0)[cc]{$H_{ix}$}}
		\multiput(0,30)(.056976744,.03372093){215}{\line(1,0){.056976744}}
		\multiput(43,29.75)(.056976744,.03372093){215}{\line(1,0){.056976744}}
		\multiput(85.5,30)(.056976744,.03372093){215}{\line(1,0){.056976744}}
		\multiput(127,30)(.056976744,.03372093){215}{\line(1,0){.056976744}}
		\multiput(-.25,29.75)(.0486111111,-.0337301587){252}{\line(1,0){.0486111111}}
		\multiput(42.75,29.5)(.0486111111,-.0337301587){252}{\line(1,0){.0486111111}}
		\multiput(85.25,29.75)(.0486111111,-.0337301587){252}{\line(1,0){.0486111111}}
		\multiput(126.75,29.75)(.0486111111,-.0337301587){252}{\line(1,0){.0486111111}}
		\put(12.25,37.5){\line(1,0){14.75}}
		\put(55.25,37.25){\line(1,0){14.75}}
		\put(97.75,37.5){\line(1,0){14.75}}
		\put(139.25,37.5){\line(1,0){14.75}}
		\multiput(27,37.5)(.045168067,-.033613445){238}{\line(1,0){.045168067}}
		\multiput(70,37.25)(.045168067,-.033613445){238}{\line(1,0){.045168067}}
		\multiput(112.5,37.5)(.045168067,-.033613445){238}{\line(1,0){.045168067}}
		\multiput(154,37.5)(.045168067,-.033613445){238}{\line(1,0){.045168067}}
		\multiput(37.75,29.75)(-.0426587302,-.0337301587){252}{\line(-1,0){.0426587302}}
		\multiput(80.75,29.5)(-.0426587302,-.0337301587){252}{\line(-1,0){.0426587302}}
		\multiput(123.25,29.75)(-.0426587302,-.0337301587){252}{\line(-1,0){.0426587302}}
		\multiput(164.75,29.75)(-.0426587302,-.0337301587){252}{\line(-1,0){.0426587302}}
		\multiput(12.25,37.25)(.0337078652,-.0359550562){445}{\line(0,-1){.0359550562}}
		\multiput(139.25,37.25)(.0337078652,-.0359550562){445}{\line(0,-1){.0359550562}}
		\put(27,37.25){\line(0,-1){16}}
		\put(70,37){\line(0,-1){16}}
		\put(112.5,37.25){\line(0,-1){16}}
		\put(154,37.25){\line(0,-1){16}}
		\multiput(12.25,21)(.1001984127,.0337301587){252}{\line(1,0){.1001984127}}
		\multiput(55.25,20.75)(.1001984127,.0337301587){252}{\line(1,0){.1001984127}}
		\multiput(97.75,21)(.1001984127,.0337301587){252}{\line(1,0){.1001984127}}
		\multiput(0,30.25)(.1038461538,-.0336538462){260}{\line(1,0){.1038461538}}
		\multiput(43,30)(.1038461538,-.0336538462){260}{\line(1,0){.1038461538}}
		\put(55,37.25){\line(0,-1){16.25}}
		\put(98.25,21.25){\line(-1,0){.5}}
		\put(97.75,37.25){\line(0,-1){16}}
		\multiput(85.25,29.75)(.124439462,.033632287){223}{\line(1,0){.124439462}}
		\multiput(139,21.25)(.0337389381,.0359513274){452}{\line(0,1){.0359513274}}
		\multiput(127,30)(4.71875,-.03125){8}{\line(1,0){4.71875}}
		\end{picture}
		
	\end{center}

	The graphs $H_{vii}$ and $H_{viii}$  each contains a minor isomorphic to $G_{7}$. So we discard them also.   Suppose $G$ is isomorphic to $H_{ix}.$ Then the coextension $N$ of $M(G)$ is graphic or it contains an odd circuit that avoids $a.$ However, $N$ cannot be graphic as it contains a minor isomorphic to  $M^*(K_5).$  Hence $N$  contains an odd circuit without containing $a.$ Therefore  $N \backslash a \cong M^*(K_5)$ contains an odd circuit. This is  a contradiction to the fact that $M^*(K_5)$ is a bipartite matroid.   
	Hence $M(G)\ncong M(H_{ix})$. The only graph remained is $H_{vi}$. Hence $G\cong H_{vi}=G_{14}$.
	
	\vskip.5cm
	\noindent	
	{\bf Case (ii).} Suppose  $G$  contains either one pair of parallel edges or a loop.

	Then $G$ can be obtained from a simple graph on $6$ vertices and $9$ edges by adding a parallel edge or a loop.  By Harary [\cite{harary6graph}, pp. 222],  there are $14$ non-isomorphic $2$-connected graphs on $6$ vertices and $9$ edges.  Out of these, eight graphs contain more than two vertices of degree two giving a $2$-edge cut after adding a parallel edge or a loop. So we discard them by Lemma \ref{c2slemma}(iv).  The remaining graphs are as shown in Figure 7.
	\begin{center}
	\unitlength .65mm 
	\linethickness{0.4pt}
	\ifx\plotpoint\undefined\newsavebox{\plotpoint}\fi 
	\begin{picture}(241.475,33.035)(0,0)
	\put(134.06,15.87){\circle*{1.33}}
	\put(11.31,16.37){\circle*{1.33}}
	\put(174.56,15.62){\circle*{1.33}}
	\put(52.06,16.12){\circle*{1.33}}
	\put(215.31,15.12){\circle*{1.33}}
	\put(93.31,16.12){\circle*{1.33}}
	\put(149.06,15.87){\circle*{1.33}}
	\put(26.31,16.37){\circle*{1.33}}
	\put(189.56,15.62){\circle*{1.33}}
	\put(67.06,16.12){\circle*{1.33}}
	\put(230.31,15.12){\circle*{1.33}}
	\put(108.31,16.12){\circle*{1.33}}
	\put(149.06,31.87){\circle*{1.33}}
	\put(26.31,32.37){\circle*{1.33}}
	\put(189.56,31.62){\circle*{1.33}}
	\put(67.06,32.12){\circle*{1.33}}
	\put(230.31,31.12){\circle*{1.33}}
	\put(108.31,32.12){\circle*{1.33}}
	\put(134.06,31.87){\circle*{1.33}}
	\put(11.31,32.37){\circle*{1.33}}
	\put(174.56,31.62){\circle*{1.33}}
	\put(52.06,32.12){\circle*{1.33}}
	\put(215.31,31.12){\circle*{1.33}}
	\put(93.31,32.12){\circle*{1.33}}
	\put(159.56,24.12){\circle*{1.33}}
	\put(36.81,24.62){\circle*{1.33}}
	\put(200.06,23.87){\circle*{1.33}}
	\put(77.56,24.37){\circle*{1.33}}
	\put(240.81,23.37){\circle*{1.33}}
	\put(118.81,24.37){\circle*{1.33}}
	\put(121.81,24.37){\circle*{1.33}}
	\put(-.94,24.87){\circle*{1.33}}
	\put(162.31,24.12){\circle*{1.33}}
	\put(39.81,24.62){\circle*{1.33}}
	\put(203.06,23.62){\circle*{1.33}}
	\put(81.06,24.62){\circle*{1.33}}
	\put(149.06,15.87){\line(-1,0){15}}
	\put(26.31,16.37){\line(-1,0){15}}
	\put(189.56,15.62){\line(-1,0){15}}
	\put(67.06,16.12){\line(-1,0){15}}
	\put(230.31,15.12){\line(-1,0){15}}
	\put(108.31,16.12){\line(-1,0){15}}
	\qbezier(134.06,15.87)(134.56,16.5)(134.06,15.62)
	\qbezier(11.31,16.37)(11.81,17)(11.31,16.12)
	\qbezier(174.56,15.62)(175.06,16.25)(174.56,15.37)
	\qbezier(52.06,16.12)(52.56,16.75)(52.06,15.87)
	\qbezier(215.31,15.12)(215.81,15.75)(215.31,14.87)
	\qbezier(93.31,16.12)(93.81,16.75)(93.31,15.87)
	\qbezier(134.06,15.62)(133.94,16.62)(134.31,15.62)
	\qbezier(11.31,16.12)(11.19,17.12)(11.56,16.12)
	\qbezier(174.56,15.37)(174.44,16.37)(174.81,15.37)
	\qbezier(52.06,15.87)(51.94,16.87)(52.31,15.87)
	\qbezier(215.31,14.87)(215.19,15.87)(215.56,14.87)
	\qbezier(93.31,15.87)(93.19,16.87)(93.56,15.87)
	\qbezier(134.31,15.62)(133.44,16.87)(134.06,16.12)
	\qbezier(11.56,16.12)(10.69,17.37)(11.31,16.62)
	\qbezier(174.81,15.37)(173.94,16.62)(174.56,15.87)
	\qbezier(52.31,15.87)(51.44,17.12)(52.06,16.37)
	\qbezier(215.56,14.87)(214.69,16.12)(215.31,15.37)
	\qbezier(93.56,15.87)(92.69,17.12)(93.31,16.37)
	\put(128.75,-2){\makebox(0,0)[cc]{Figure 7}}
	\put(142,5){\makebox(0,0)[cc]{$H_{xiii}$}}
	\put(19.25,5.5){\makebox(0,0)[cc]{$H_{x}$}}
	\put(182.5,4.75){\makebox(0,0)[cc]{$H_{xiv}$}}
	\put(60,5.25){\makebox(0,0)[cc]{$H_{xi}$}}
	\put(223.25,4.25){\makebox(0,0)[cc]{$H_{xv}$}}
	\put(101.25,5.25){\makebox(0,0)[cc]{$H_{xii}$}}
	\multiput(121.75,24.5)(.081125828,.048013245){151}{\line(1,0){.081125828}}
	\multiput(-1,25)(.081125828,.048013245){151}{\line(1,0){.081125828}}
	\multiput(162.25,24.25)(.081125828,.048013245){151}{\line(1,0){.081125828}}
	\multiput(39.75,24.75)(.081125828,.048013245){151}{\line(1,0){.081125828}}
	\multiput(203,23.75)(.081125828,.048013245){151}{\line(1,0){.081125828}}
	\multiput(81,24.75)(.081125828,.048013245){151}{\line(1,0){.081125828}}
	\multiput(121.5,24.25)(.06920904,-.048022599){177}{\line(1,0){.06920904}}
	\multiput(-1.25,24.75)(.06920904,-.048022599){177}{\line(1,0){.06920904}}
	\multiput(162,24)(.06920904,-.048022599){177}{\line(1,0){.06920904}}
	\multiput(39.5,24.5)(.06920904,-.048022599){177}{\line(1,0){.06920904}}
	\multiput(202.75,23.5)(.06920904,-.048022599){177}{\line(1,0){.06920904}}
	\multiput(80.75,24.5)(.06920904,-.048022599){177}{\line(1,0){.06920904}}
	\put(134,32){\line(1,0){14.75}}
	\put(11.25,32.5){\line(1,0){14.75}}
	\put(174.5,31.75){\line(1,0){14.75}}
	\put(52,32.25){\line(1,0){14.75}}
	\put(215.25,31.25){\line(1,0){14.75}}
	\put(93.25,32.25){\line(1,0){14.75}}
	\multiput(148.75,32)(.064759036,-.048192771){166}{\line(1,0){.064759036}}
	\multiput(26,32.5)(.064759036,-.048192771){166}{\line(1,0){.064759036}}
	\multiput(189.25,31.75)(.064759036,-.048192771){166}{\line(1,0){.064759036}}
	\multiput(66.75,32.25)(.064759036,-.048192771){166}{\line(1,0){.064759036}}
	\multiput(230,31.25)(.064759036,-.048192771){166}{\line(1,0){.064759036}}
	\multiput(108,32.25)(.064759036,-.048192771){166}{\line(1,0){.064759036}}
	\multiput(159.5,24.25)(-.060734463,-.048022599){177}{\line(-1,0){.060734463}}
	\multiput(36.75,24.75)(-.060734463,-.048022599){177}{\line(-1,0){.060734463}}
	\multiput(200,24)(-.060734463,-.048022599){177}{\line(-1,0){.060734463}}
	\multiput(77.5,24.5)(-.060734463,-.048022599){177}{\line(-1,0){.060734463}}
	\multiput(240.75,23.5)(-.060734463,-.048022599){177}{\line(-1,0){.060734463}}
	\multiput(118.75,24.5)(-.060734463,-.048022599){177}{\line(-1,0){.060734463}}
	\put(26,32.25){\line(0,-1){16}}
	\put(108,32){\line(0,-1){16}}
	\put(174.25,31.75){\line(0,-1){16.25}}
	\put(51.75,32.25){\line(0,-1){16.25}}
	\put(215.75,15){\line(-1,0){.5}}
	\put(93.75,16){\line(-1,0){.5}}
	\put(93.25,32){\line(0,-1){16}}
	\multiput(80.75,24.5)(.177884615,.048076923){156}{\line(1,0){.177884615}}
	\put(-1,24.5){\line(1,0){37.75}}
	\put(11,32.5){\line(0,-1){16}}
	\multiput(39.75,24.5)(.182119205,.048013245){151}{\line(1,0){.182119205}}
	\multiput(51.75,32.5)(.048136646,-.051242236){322}{\line(0,-1){.051242236}}
	\put(134,31.75){\line(0,-1){15.75}}
	\put(121.75,24.25){\line(1,0){37.75}}
	\multiput(134,31.75)(.0480769231,-.0504807692){312}{\line(0,-1){.0504807692}}
	\put(162,23.75){\line(1,0){38}}
	\put(200,23.75){\line(0,-1){.25}}
	\multiput(200,23.5)(-.159937888,-.048136646){161}{\line(-1,0){.159937888}}
	\put(203,23.5){\line(1,0){38}}
	\multiput(215,15.25)(.0481072555,.0504731861){317}{\line(0,1){.0504731861}}
	\multiput(215.25,31.5)(.0480456026,-.0521172638){307}{\line(0,-1){.0521172638}}
	\end{picture}
	\end{center}

	Out of these, $H_x$ and $H_{xii}$ each contains the graph $G_{7}$ as a minor. Hence, they can be discarded.  Each of the remaining four graphs contains $G_{6}$  as a minor after addition of a loop. Similarly,  each of these graphs  contains  $G_{7}$ as a minor after the addition of one parallel edge in such a way that resulting graph does not contain a $2$-edge cut. Thus, in this case, all choices for $G$ are discarded.  
	
	\vskip.5cm
	\noindent
	{\bf Case (iii).} Suppose  $G$  contains either two  pairs of parallel edges, or  one pair of parallel edges and a  loop.

	Then $G$ can be obtained from a simple graph on $6$ vertices and $8$ edges.  There are nine non-isomorphic $2$-connected graphs on $6$ vertices and $8$ edges by Harary [\cite{harary6graph}, pp. 221]. Out of these, five contain more than three vertices  of degree two giving a 2-edge cut in $G$ after adding two parallel edges or one parallel edge and a loop. Hence  we discard them by Lemma \ref{c2slemma}(iv). The remaining graphs are 
	$H_{xvi}$, $H_{xvii}$, $H_{xviii}$ and $H_{xix}$ as shown in Figure 8.

	\begin{center} 
		\unitlength .65mm 
		\linethickness{0.4pt}
		\ifx\plotpoint\undefined\newsavebox{\plotpoint}\fi 
		\begin{picture}(167.225,27.535)(0,0)
		\put(12.31,10.87){\circle*{1.33}}
		\put(55.31,10.62){\circle*{1.33}}
		\put(97.81,10.87){\circle*{1.33}}
		\put(141.06,10.62){\circle*{1.33}}
		\put(27.31,10.87){\circle*{1.33}}
		\put(70.31,10.62){\circle*{1.33}}
		\put(112.81,10.87){\circle*{1.33}}
		\put(156.06,10.62){\circle*{1.33}}
		\put(27.31,26.87){\circle*{1.33}}
		\put(70.31,26.62){\circle*{1.33}}
		\put(112.81,26.87){\circle*{1.33}}
		\put(156.06,26.62){\circle*{1.33}}
		\put(12.31,26.87){\circle*{1.33}}
		\put(55.31,26.62){\circle*{1.33}}
		\put(97.81,26.87){\circle*{1.33}}
		\put(141.06,26.62){\circle*{1.33}}
		\put(37.81,19.12){\circle*{1.33}}
		\put(80.81,18.87){\circle*{1.33}}
		\put(123.31,19.12){\circle*{1.33}}
		\put(166.56,18.87){\circle*{1.33}}
		\put(.06,19.37){\circle*{1.33}}
		\put(43.06,19.12){\circle*{1.33}}
		\put(85.56,19.37){\circle*{1.33}}
		\put(128.81,19.12){\circle*{1.33}}
		\put(27.31,10.87){\line(-1,0){15}}
		\put(70.31,10.62){\line(-1,0){15}}
		\put(112.81,10.87){\line(-1,0){15}}
		\put(156.06,10.62){\line(-1,0){15}}
		\qbezier(12.31,10.87)(12.81,11.5)(12.31,10.62)
		\qbezier(55.31,10.62)(55.81,11.25)(55.31,10.37)
		\qbezier(97.81,10.87)(98.31,11.5)(97.81,10.62)
		\qbezier(141.06,10.62)(141.56,11.25)(141.06,10.37)
		\qbezier(12.31,10.62)(12.19,11.62)(12.56,10.62)
		\qbezier(55.31,10.37)(55.19,11.37)(55.56,10.37)
		\qbezier(97.81,10.62)(97.69,11.62)(98.06,10.62)
		\qbezier(141.06,10.37)(140.94,11.37)(141.31,10.37)
		\qbezier(12.56,10.62)(11.69,11.87)(12.31,11.12)
		\qbezier(55.56,10.37)(54.69,11.62)(55.31,10.87)
		\qbezier(98.06,10.62)(97.19,11.87)(97.81,11.12)
		\qbezier(141.31,10.37)(140.44,11.62)(141.06,10.87)
		\put(80.75,-8){\makebox(0,0)[cc]{Figure 8}}
		\put(20.25,1){\makebox(0,0)[cc]{$H_{xvi}$}}
		\put(63.25,1){\makebox(0,0)[cc]{$H_{xvii}$}}
		\put(105.75,1){\makebox(0,0)[cc]{$H_{xviii}$}}
		\put(149,1){\makebox(0,0)[cc]{$H_{xix}$}}
		\multiput(0,19.5)(.0875,.051785714){140}{\line(1,0){.0875}}
		\multiput(43,19.25)(.0875,.051785714){140}{\line(1,0){.0875}}
		\multiput(85.5,19.5)(.0875,.051785714){140}{\line(1,0){.0875}}
		\multiput(128.75,19.25)(.0875,.051785714){140}{\line(1,0){.0875}}
		\multiput(-.25,19.25)(.074695122,-.051829268){164}{\line(1,0){.074695122}}
		\multiput(42.75,19)(.074695122,-.051829268){164}{\line(1,0){.074695122}}
		\multiput(85.25,19.25)(.074695122,-.051829268){164}{\line(1,0){.074695122}}
		\multiput(128.5,19)(.074695122,-.051829268){164}{\line(1,0){.074695122}}
		\put(12.25,27){\line(1,0){14.75}}
		\put(55.25,26.75){\line(1,0){14.75}}
		\put(97.75,27){\line(1,0){14.75}}
		\put(141,26.75){\line(1,0){14.75}}
		\multiput(27,27)(.069354839,-.051612903){155}{\line(1,0){.069354839}}
		\multiput(70,26.75)(.069354839,-.051612903){155}{\line(1,0){.069354839}}
		\multiput(112.5,27)(.069354839,-.051612903){155}{\line(1,0){.069354839}}
		\multiput(155.75,26.75)(.069354839,-.051612903){155}{\line(1,0){.069354839}}
		\multiput(37.75,19.25)(-.06554878,-.051829268){164}{\line(-1,0){.06554878}}
		\multiput(80.75,19)(-.06554878,-.051829268){164}{\line(-1,0){.06554878}}
		\multiput(123.25,19.25)(-.06554878,-.051829268){164}{\line(-1,0){.06554878}}
		\multiput(166.5,19)(-.06554878,-.051829268){164}{\line(-1,0){.06554878}}
		\put(27,26.75){\line(0,-1){16}}
		\put(55,26.75){\line(0,-1){16.25}}
		\put(98.25,10.75){\line(-1,0){.5}}
		\put(141.5,10.5){\line(-1,0){.5}}
		\put(97.75,26.75){\line(0,-1){16}}
		\multiput(85.25,19.25)(.19137931,.051724138){145}{\line(1,0){.19137931}}
		\put(12,27){\line(0,-1){16}}
		\put(42.75,18.75){\line(1,0){37.75}}
		\multiput(140.75,26.75)(.0518707483,-.0552721088){294}{\line(0,-1){.0552721088}}
		\multiput(140.75,10.75)(.0518394649,.0535117057){299}{\line(0,1){.0535117057}}
		\end{picture}
		
	\end{center}
	
	\vskip.4cm

	In each of these graphs, addition of two parallel edges gives a minor  isomorphic to $G_{7}$ or contains a  $2$-edge cut.  Similarly, after addition of one parallel edge and a loop to each of these graphs, the resulting graph contains $G_{7}$ as a minor or contains a $2$-edge cut.  Thus, in this case, all choices for $G$ get discarded.

	\vskip.5cm\noindent
	{\bf Case (iv).} Suppose  $G$  contains three  pairs of parallel edges, or two pairs of parallel edges and a loop.

	Then $G$ can be obtained from simple $2$-connected graph with $6$ vertices and $7$ edges. Then only $H_{xx}$, $H_{xxi}$ and $H_{xxii}$ are the simple, non-isomorphic $2$-connected graphs [\cite{harary6graph}, pp. 220] on $6$ vertices and $7$ edges as shown in Figure 9. 
	
	\begin{center}
		\unitlength .65mm 
		\linethickness{0.4pt}
		\ifx\plotpoint\undefined\newsavebox{\plotpoint}\fi 
		\begin{picture}(125.08,35.535)(0,0)
		\put(13.415,18.87){\circle*{1.33}}
		\put(56.415,18.62){\circle*{1.33}}
		\put(98.915,18.87){\circle*{1.33}}
		\put(28.415,18.87){\circle*{1.33}}
		\put(71.415,18.62){\circle*{1.33}}
		\put(113.915,18.87){\circle*{1.33}}
		\put(28.415,34.87){\circle*{1.33}}
		\put(71.415,34.62){\circle*{1.33}}
		\put(113.915,34.87){\circle*{1.33}}
		\put(13.415,34.87){\circle*{1.33}}
		\put(56.415,34.62){\circle*{1.33}}
		\put(98.915,34.87){\circle*{1.33}}
		\put(38.915,27.12){\circle*{1.33}}
		\put(81.915,26.87){\circle*{1.33}}
		\put(124.415,27.12){\circle*{1.33}}
		\put(1.165,27.37){\circle*{1.33}}
		\put(44.165,27.12){\circle*{1.33}}
		\put(86.665,27.37){\circle*{1.33}}
		\put(28.415,18.87){\line(-1,0){15}}
		\put(71.415,18.62){\line(-1,0){15}}
		\put(113.915,18.87){\line(-1,0){15}}
		\qbezier(13.415,18.87)(13.915,19.5)(13.415,18.62)
		\qbezier(56.415,18.62)(56.915,19.25)(56.415,18.37)
		\qbezier(98.915,18.87)(99.415,19.5)(98.915,18.62)
		\qbezier(13.415,18.62)(13.295,19.62)(13.665,18.62)
		\qbezier(56.415,18.37)(56.295,19.37)(56.665,18.37)
		\qbezier(98.915,18.62)(98.795,19.62)(99.165,18.62)
		\qbezier(13.665,18.62)(12.795,19.87)(13.415,19.12)
		\qbezier(56.665,18.37)(55.795,19.62)(56.415,18.87)
		\qbezier(99.165,18.62)(98.295,19.87)(98.915,19.12)
		\put(21.355,8){\makebox(0,0)[cc]{$H_{xx}$}}
		\put(64.355,7.75){\makebox(0,0)[cc]{$H_{xxi}$}}
		\put(106.855,8){\makebox(0,0)[cc]{$H_{xxii}$}}
		\multiput(1.105,27.5)(.056976744,.03372093){215}{\line(1,0){.056976744}}
		\multiput(44.105,27.25)(.056976744,.03372093){215}{\line(1,0){.056976744}}
		\multiput(86.605,27.5)(.056976744,.03372093){215}{\line(1,0){.056976744}}
		\multiput(.855,27.25)(.0486111111,-.0337301587){252}{\line(1,0){.0486111111}}
		\multiput(43.855,27)(.0486111111,-.0337301587){252}{\line(1,0){.0486111111}}
		\multiput(86.355,27.25)(.0486111111,-.0337301587){252}{\line(1,0){.0486111111}}
		\put(13.355,35){\line(1,0){14.75}}
		\put(56.355,34.75){\line(1,0){14.75}}
		\multiput(28.105,35)(.045168067,-.033613445){238}{\line(1,0){.045168067}}
		\multiput(71.105,34.75)(.045168067,-.033613445){238}{\line(1,0){.045168067}}
		\multiput(113.605,35)(.045168067,-.033613445){238}{\line(1,0){.045168067}}
		\multiput(38.855,27.25)(-.0426587302,-.0337301587){252}{\line(-1,0){.0426587302}}
		\multiput(81.855,27)(-.0426587302,-.0337301587){252}{\line(-1,0){.0426587302}}
		\multiput(124.355,27.25)(-.0426587302,-.0337301587){252}{\line(-1,0){.0426587302}}
		\put(56.105,34.75){\line(0,-1){16.25}}
		\put(99.355,18.75){\line(-1,0){.5}}
		\put(1.105,27.25){\line(1,0){37.75}}
		\put(81.855,26.75){\line(0,-1){.25}}
		\multiput(86.605,27.25)(.123318386,.033632287){223}{\line(1,0){.123318386}}
		\multiput(98.855,35)(.107142857,-.033613445){238}{\line(1,0){.107142857}}
		\put(62.5,.75){\makebox(0,0)[cc]{Figure 9}}
		\end{picture}
		
	\end{center}

	  For each of these graphs, if we add three parallel edges, or two parallel edges and one loop,  then the resulting graphs contain  $G_{7}$ as a minor or a $2$-edge cut.   Hence $G$ has no choice in this case also. 
	
	There is no $2$-connected graph without containing a minor $G_{7}$ or a $2$-edge cut with  at least three pairs of parallel edges and a loop, or at least four pairs of parallel edges. 	
\end{proof}

\section{Proof of Main Theorem}
In this section, we prove  Theorem \ref{c2grspl3} using the results of the last two sections.  We use the following result  and the fact that, a matroid $M$ is  {\it  Eulerian} if its ground set is an union of disjoint circuits of $M.$  

\begin{lemma}\cite{shikare2011generalized}\label{c18} Let $M$ be a  binary matroid and $T \subseteq E(M)$ and $\mathcal{C}$ be the collection of circuits of $M$. Then  the collection of circuits of $M_T$ is $ \mathcal{C}_1 \cup  \mathcal{C}_2$, where \\
	$\mathcal{C}_1=\{C \in  \mathcal{C}~\colon ~|C \cap T| ~is ~even\}$ and \\
	$\mathcal{C}_2=\{C_1 \cup C_2~\colon~C_1, C_2\in \mathcal{C},~C_1 \cap C_2= \emptyset,~ |C_i\cap T|~is~odd,~C_1 \cup C_2~contains~no~member ~of ~ \mathcal{C}_1 \}.$  
	
\end{lemma}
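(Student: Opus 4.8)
The plan is to pass everywhere to cycle spaces over $GF(2)$. Recall that for a binary matroid $N$ with a representing matrix $B$, the cycle space $\mathcal Z(N)=\{z\in GF(2)^{E(N)}:Bz=0\}$ is spanned by the indicator vectors of circuits; that over $GF(2)$ the columns of $B$ indexed by a circuit sum to $0$; that the circuits of $N$ are exactly the minimal nonzero members of $\mathcal Z(N)$; and that every nonzero $z\in\mathcal Z(N)$ is the indicator of a disjoint union of circuits of $N$ (strip circuits off its support one at a time). Applying this to Definition~\ref{c11}, the matrix $A_T$ is $A$ with one extra row having $1$'s exactly in the columns of $T$, so $\mathcal Z(M_T)=\mathcal Z(M)\cap\chi_T^{\perp}$; that is, a set is a cycle of $M_T$ precisely when it is a cycle of $M$ meeting $T$ in an even number of elements. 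Thus the lemma asserts that the minimal nonempty such sets are exactly $\mathcal C_1\cup\mathcal C_2$.

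First I would dispose of $\mathcal C_1$ and of the inclusion $\mathrm{circuits}(M_T)\subseteq\mathcal C_1\cup\mathcal C_2$. If $C\in\mathcal C_1$ then $\chi_C\in\mathcal Z(M_T)$, and being minimal already in the larger space $\mathcal Z(M)$ it is minimal in $\mathcal Z(M_T)$, so $C$ is a circuit of $M_T$. Conversely let $Z$ be a circuit of $M_T$. If $Z$ is a circuit of $M$ then $Z\in\mathcal C_1$; otherwise $Z$ properly contains a circuit $C_1$ of $M$, and $|C_1\cap T|$ must be odd, for otherwise $\chi_{C_1}\in\mathcal Z(M_T)$ with $C_1\subsetneq Z$ contradicts minimality. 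Then $\chi_Z+\chi_{C_1}\in\mathcal Z(M)$ is supported on $Z\setminus C_1$, hence contains a circuit $C_2$ of $M$ disjoint from $C_1$, and likewise $|C_2\cap T|$ is odd. Now $\chi_{C_1\cup C_2}=\chi_{C_1}+\chi_{C_2}\in\mathcal Z(M_T)$ and $C_1\cup C_2\subseteq Z$, so minimality gives $Z=C_1\cup C_2$; moreover any circuit $C_3$ of $M$ with $C_3\subseteq Z$ and $|C_3\cap T|$ even would satisfy $\chi_{C_3}\in\mathcal Z(M_T)$ and hence $C_3=Z$, impossible since $Z$ is not a circuit of $M$. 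Therefore $Z\in\mathcal C_2$.

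It remains to show every $Z=C_1\cup C_2\in\mathcal C_2$ is a circuit of $M_T$. Since $\chi_Z=\chi_{C_1}+\chi_{C_2}\in\mathcal Z(M)$ and $|Z\cap T|$ is even, $Z$ is a nonempty cycle of $M_T$; the content is minimality, and I would reduce it to the purely matroidal claim that $\nu(C_1\cup C_2)=2$, where $\nu(X)=|X|-r_M(X)$. Indeed $\chi_T$ restricted to $Z$ is not in the row space of $A|_Z$ — it fails to annihilate $\chi_{C_1}\in\mathcal Z(M)|_Z$ since $\langle\chi_T,\chi_{C_1}\rangle=|C_1\cap T|=1$ — so $r_{M_T}(Z)=r_M(Z)+1=|Z|-\nu(Z)+1$, which equals $|Z|-1$ exactly when $\nu(Z)=2$, and in that case $\mathcal Z(M)|_Z=\{0,\chi_{C_1},\chi_{C_2},\chi_Z\}$, whence the only nonempty cycle of $M_T$ inside $Z$ is $Z$ itself. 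So suppose instead that $C_1\cup C_2$ contains a circuit $E$ of $M$ with $E\neq C_1,C_2$; then $E$ meets both $C_1$ and $C_2$ (a circuit of $M$ inside $C_1\cup C_2$ meeting only one of them would be contained in, hence equal to, that circuit), and the sets $P=E\cap C_1,\ Q=C_1\setminus E,\ R=E\cap C_2,\ S=C_2\setminus E$ are all nonempty and partition $C_1\cup C_2$. From $\chi_{C_1}+\chi_E=\chi_{Q\cup R}\in\mathcal Z(M)$ and the hypotheses that $|C_1\cap T|,|E\cap T|$ are odd, one gets $|(Q\cup R)\cap T|$ even; since $Z$ contains no member of $\mathcal C_1$, the nonempty cycle $Q\cup R$ cannot itself be a circuit, so it properly contains a circuit of $M$, giving a further circuit of $M$ inside $C_1\cup C_2$ distinct from $C_1,C_2,E$.

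I expect this final point — that a union of two disjoint circuits of a binary matroid containing no circuit even in $T$ must in fact contain no third circuit at all — to be the main obstacle. The clean route is induction on $|C_1\cup C_2|$: the cycle $Q\cup R$ above (and, symmetrically, $P\cup S$ and $Q\cup S$) is itself a union of disjoint circuits of $M$, strictly smaller than $C_1\cup C_2$ since $P\neq\emptyset$, and every circuit inside it is a circuit inside $C_1\cup C_2$ and hence odd in $T$, so the induction hypothesis pins down its cycle structure; tracking the resulting linear relations among $\chi_{C_1},\chi_{C_2},\chi_E$ and the circuits produced inside the blocks forces an odd-length relation $\sum_i\chi_{E_i}=0$ among circuits of $M$ contained in $C_1\cup C_2$, which is incompatible with all of them being odd in $T$. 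This bookkeeping is the only part of the argument that is not immediate; granting it (as carried out in \cite{shikare2011generalized}), $\mathrm{circuits}(M_T)=\mathcal C_1\cup\mathcal C_2$.
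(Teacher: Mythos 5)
The paper does not prove this lemma at all --- it is quoted from \cite{shikare2011generalized} --- so your proposal has to stand on its own. Your translation to cycle spaces is the right framework: $\mathcal Z(M_T)=\mathcal Z(M)\cap\chi_T^{\perp}$ is correct, the argument that every $C\in\mathcal C_1$ is a circuit of $M_T$ is complete, and your proof of the inclusion $\mathrm{circuits}(M_T)\subseteq\mathcal C_1\cup\mathcal C_2$ is also complete and correct. The reduction of the remaining inclusion to the claim $\nu(C_1\cup C_2)=2$ (equivalently: $C_1$ and $C_2$ are the \emph{only} circuits of $M$ inside $C_1\cup C_2$) is likewise a correct reformulation, since $\nu_{M_T}(Z)=\nu_M(Z)-1$ for $Z=C_1\cup C_2$.

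The genuine gap is that this last claim --- the assertion that ``$C_1\cup C_2$ contains no circuit of $M$ meeting $T$ evenly'' forces $C_1\cup C_2$ to contain no third circuit of $M$ at all --- is never proved. You acknowledge this yourself and end with ``granting it (as carried out in \cite{shikare2011generalized})'', which is a citation of the very statement being proved, not a proof. Moreover, the induction you sketch does not close as formulated: the even-in-$T$ cycle $Q\cup R$ produced from a hypothetical third circuit $E$ is a disjoint union of an \emph{even number} of circuits, not necessarily of exactly two, so the inductive hypothesis (stated only for unions of two disjoint circuits) need not apply to it; and the concluding step, that the bookkeeping ``forces an odd-length relation $\sum_i\chi_{E_i}=0$ among circuits,'' is asserted rather than derived. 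This is precisely the nontrivial combinatorial core of the lemma (one can check, for instance, that in the rank-$3$ case a case analysis of the at most four minimal vectors in the affine hyperplane $\varphi^{-1}(1)$ is already needed to rule out a third circuit), so without it the inclusion $\mathcal C_2\subseteq\mathrm{circuits}(M_T)$ --- one third of the statement --- remains unestablished.
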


We restate Theorem \ref{c2grspl3} here for convenience. 

\begin{theorem} 
	Let $M$ be a graphic matroid. Then the splitting matroid $M_T$ is graphic for any $T\subseteq E(M)$ with $|T|=3$ if and only if  $M$  does not contain a minor isomorphic to any of the circuit matroids  $M(\tilde{G_4}),  M(\tilde{G_5}) $ and $M(G_k)$ for $k=6,7,8,9,$ where  $G_4, G_5, \dots, G_{9}$ are the graphs as shown in Figure 2.
\end{theorem}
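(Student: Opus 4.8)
The plan is to derive the theorem by combining the material of Sections~3 and~4. The implication ``if $M$ has a minor isomorphic to one of the six listed circuit matroids then some $M_T$ with $|T|=3$ is non-graphic'' is precisely Proposition~\ref{c2exten2}, so only the converse requires proof. Suppose, then, that $M$ is graphic, that $M_T$ is not graphic for some $T\subseteq E(M)$ with $|T|=3$, and, aiming for a contradiction, that $M$ has no minor isomorphic to any of $M(\tilde{G_4}),M(\tilde{G_5}),M(G_6),M(G_7),M(G_8),M(G_9)$. By Theorem~\ref{c1cgm}, $M_T$ has a minor isomorphic to some $F\in\mathcal{F}=\{F_7,F_7^*,M^*(K_{3,3}),M^*(K_5)\}$, and combining Lemmas~\ref{c2minle},~\ref{c2nocckts} and~\ref{c2nocirut} yields a graphic minor $N$ of $M$ with $T\subseteq E(N)$ which is either isomorphic to one of $M(\tilde{G_1}),M(\tilde{G_2}),M(\tilde{G_3})$, or is minimal with respect to $F$ in the sense of Definition~\ref{c2mindef}.

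I would dispose of the first alternative immediately: since $G_4=G_3$ and $G_5=G_1$, the matroids $M(\tilde{G_1})$ and $M(\tilde{G_3})$ are already on the forbidden list, while for $M(\tilde{G_2})$ one runs through the finitely many graphs obtained by attaching one edge to $G_2$ and checks that each carries one of the listed matroids as a minor. In either subcase $M$ has a listed minor, a contradiction, so $N$ must be minimal with respect to $F$. The engine now is Lemma~\ref{c2mml}, which produces a binary matroid $\widehat{N}$ with an element $a$ such that $\widehat{N}\backslash a\cong F$, the quotient $\widehat{N}/a$ is a graphic minor of $N$, and $N$ is either $\widehat{N}/a$ or a coextension of it by one, two or three elements. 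Since $M$, hence $N$ and $\widehat{N}/a$, has no $M(G_6)$- or $M(G_7)$-minor, Lemmas~\ref{c2mff*7},~\ref{c2mff7},~\ref{c2mfm*33} and~\ref{c2mfm*5} all apply and pin down $\widehat{N}/a$: it is $M(G_{10})$ or $M(G_{11})$ if $F=F_7^*$; $M(G_{12})$ if $F=F_7$; $M(G_{13})$ if $F=M^*(K_{3,3})$; and $M(G_{14})$ if $F=M^*(K_5)$.

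Two of the four cases are then quick. If $F=F_7^*$ the argument closes at once, because $G_{10}$ and $G_{11}$ are (up to isomorphism) the graphs $G_6$ and $G_7$ redrawn in Figure~3, so $\widehat{N}/a$, and hence its coextension $N$, has a minor isomorphic to $M(G_6)$ or $M(G_7)$. If $F=F_7$, I would note that $M(G_{12})$ is the cycle matroid of a fat triangle with a loop added, of rank $2$, whose seven elements fall into three parallel classes together with one loop; a short count of parallel classes and loops then shows that no coextension of $M(G_{12})$ by at most three elements can be minimal with respect to $F_7$ while avoiding coloops and $2$-cocircuits, so this case cannot occur.

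The two remaining cases, $F=M^*(K_{3,3})$ and $F=M^*(K_5)$, are where the bulk of the work lies, and I expect this to be the main obstacle. In each, $\widehat{N}/a$ is a single fixed small graphic matroid, $M(G_{13})$ or $M(G_{14})$, and $N$ is one of its graphic coextensions by at most three elements, with no coloop or $2$-cocircuit and minimal with respect to $F$. The plan is to enumerate these coextensions---equivalently, the graphs obtained from $G_{13}$ (respectively $G_{14}$) by at most three successive vertex splits---impose the minimality and $2$-cocircuit constraints, and verify that every surviving matroid contains $M(G_9)$ (respectively $M(G_8)$) as a minor; since $M(G_9)/z\cong M(G_{13})$ and there is an analogous relation of $M(G_8)$ over $M(G_{14})$, the target minor is itself the smallest such coextension, which makes the verification plausible but still a lengthy finite case analysis. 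Once it is complete, $N$---and therefore $M$---has a listed minor, contradicting our assumption, and the theorem follows.
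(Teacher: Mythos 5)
Your overall architecture is the same as the paper's (Proposition \ref{c2exten2} for one direction; Lemmas \ref{c2minle}, \ref{c2nocckts}, \ref{c2nocirut}, \ref{c2mml} and the four quotient lemmas for the other), but two of your four case analyses contain genuine gaps. In the $F_7$ case, your claim that ``a short count of parallel classes and loops'' rules out all coextensions of $M(G_{12})$ does not work: a coextension can separate a parallel pair of $M(G_{12})$ into non-parallel elements, so the parallel-class structure of $M(G_{12})$ is not inherited. Indeed the paper must enumerate the coextensions $G_{15},\dots,G_{19}$ of $G_{12}$ by an edge, discard four of them via $G_6$/$G_7$ minors, and then eliminate the survivor $G_{17}$ (which has only two disjoint $2$-circuits left) by a separate argument using the circuit description of $M_T$ (Lemma \ref{c18}) to show a $2$-circuit or loop persists in $Q_T/x$, contradicting $Q_T/x\cong F_7$. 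Your parallel-class count covers $M(G_{12})$ itself but not this surviving coextension.

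In the $M^*(K_{3,3})$ case your plan --- ``verify that every surviving matroid contains $M(G_9)$ as a minor'' --- fails for one of the survivors: the coextension $G_{20}$ of $G_{13}$ avoids $G_6$, $G_7$ and $G_9$ as minors (it has the same edge and vertex counts as $G_9$ but is not isomorphic to it), so it cannot be excluded by exhibiting a forbidden minor. The paper instead kills it by a parity argument: a cocircuit of the Eulerian matroid $M(G_{20})$ containing $T$ yields an odd cocircuit in $Q_T/x$, contradicting the fact that $M^*(K_{3,3})$ is Eulerian. Without some such non-minor-based exclusion your case analysis cannot close. Two smaller points: the $M^*(K_5)$ case, which you flag as a main obstacle, is actually immediate, since $G_{14}=G_8$ and $N/a$ is already a minor of $M$ whether or not $Q$ is a proper coextension of it; and for $M(\tilde{G_2})$ you need not enumerate edge-additions, since $G_7$ is a contraction of $G_2$ itself.
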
 

\begin{proof}   Suppose $M$ has a minor  isomorphic to one of the circuit matroids $M(\tilde{G_4}),  M(\tilde{G_5}) $ and $M(G_k)$ for $k= 6,7,8,9.$ Then, by Proposition \ref{c2exten2}, $M_T$ is not graphic for some $T \subseteq E(M)$ with $|T| = 3.$

	Conversely, suppose  that $M$ does not contain a minor  isomorphic to any of the circuits matroids $M(\tilde{G_4}),  M(\tilde{G_5}) $ and $M(G_k)$ for $k= 6,7,8,9.$  We prove that $M_T$ is graphic for any $T\subseteq E(M)$ with $|T|=3$.
	On the contrary, assume that $M_T$ is not a graphic matroid for some $ T \subseteq E(M)$ with $|T| = 3.$ Then, by Theorem \ref{c1cgm}, $M_T$  has a minor isomorphic to  $F$ for some  $F \in \mathcal{F} = \{ F_7, F_7^*, M^*(K_{3,3}), M^*(K_5)\}$. 
	
	By Lemma \ref{c2minle}, $M$ contains a minor $Q$ containing $T$ such that one of the following holds. 
	\begin{enumerate}[label=(\roman*).]
		\item $Q$ is isomorphic to one of the circuits matroids $M(\tilde{G_1}),  M(\tilde{G_2})$ or $M(\tilde{G_3})$, where $G_i$ is the graph as shown in Figure 1 for $i=1,2,3.$  
		\item $ Q_T \cong F $ or $Q_T/T' \cong F$  for some non-empty subset $ T'$ of $T.$  
	\end{enumerate}

	Suppose (i) holds.  Since  $G_1=G_5$ and $G_3=G_4$, $Q$ is isomorphic to $M(\tilde{G_2}).$  Note that $G_7$ can be obtained by contracting one edge of $G_2$.  Hence $M(G_{7})$ is a minor of $M(\tilde{G_2})$. Thus $M$ has a minor isomorphic to $M(G_{7})$, a contradiction.  	Therefore (i) does not hold.

	Suppose (ii) holds. By Lemmas \ref{c2nocckts} and  \ref{c2nocirut}, we may assume that $Q$ does not contain a 2-cocircuit and a coloop. Hence it is minimal with respect to $F$. 
	By Lemma \ref{c2mml}, there exists a binary matroid  $N$ such that $N \backslash a \cong F$, and $Q=N/a$ or $Q$ is a coextension of $N/a$ by one, two or three elements. 
	As $M$ is graphic, $Q$ and $N/a$ are also graphic. Let $Q=M(G)$ for some graph $G$. We may assume that $G$ is connected.  There are the following four cases depending on the choice of $F.$  We obtain a contradiction in each case using Lemma \ref{c2mff*7}, \ref{c2mff7}, \ref{c2mfm*33} and \ref{c2mfm*5}. 
	\vskip.3cm
	\noindent 
	{\bf  Case (i)}.  Suppose   $F\cong F^*_7$. 
	
	Since $N\backslash a \cong F$,  we have $ N\backslash a \cong F^*_7.$ By Lemma \ref{c2mff*7}, $N/a$ is isomorphic to $M(G_{10})$ or $M(G_{11})$, where $G_{10}$ or $G_{11}$ are the graphs as shown in Figure 3.  Note that  $G_{10}=G_{6}$ and $G_{11}=G_{7}.$  Therefore $M(G_6)$ or $M(G_7)$ is isomorphic to a minor of $M$, a contradiction.

	\vskip.3cm
	\noindent  
	{\bf  Case (ii).} Suppose  $F \cong F_7$.

	Then $ N\backslash a \cong F_7.$ By Lemma \ref{c2mff7}, $N/a$  is isomorphic to $M(G_{12})$, where $G_{12}$ is the graph as shown in Figure 3.  Since $G_{12}$ contains three pairs of parallel edges and a loop, for  any $T \subseteq E(G_{12})$ with $|T|=3$,  $M(G_{12})_T $ contains a $2$-circuit  or  a loop.  Therefore $M(G_{12})_T \ncong F_7$.  Hence $Q\ncong M(G_{12})$. Therefore $Q$ is a coextension of $N/a=M(G_{12})$ by one, two or three elements.

	There are five  non-isomorphic coextensions  $G_{15}$, $G_{16}$, $G_{17}$, $G_{18}$ and $G_{19}$ of $G_{12}$ by an edge without containing  a 2-edge cut and a coloop as shown in Figure 10.

	\begin{center}
		\unitlength .7mm 
		\linethickness{0.4pt}
		\ifx\plotpoint\undefined\newsavebox{\plotpoint}\fi 
		\begin{picture}(181.025,45)(0,0)
		\put(5.56,19.12){\circle*{1.33}}
		\put(89.81,19.12){\circle*{1.33}}
		\put(114.31,19.12){\circle*{1.33}}
		\put(34.81,18.62){\circle*{1.33}}
		\put(64.31,18.12){\circle*{1.33}}
		\put(20.56,19.12){\circle*{1.33}}
		\put(104.81,19.12){\circle*{1.33}}
		\put(129.31,19.12){\circle*{1.33}}
		\put(49.81,18.62){\circle*{1.33}}
		\put(79.31,18.12){\circle*{1.33}}
		\put(20.56,32.12){\circle*{1.33}}
		\put(104.81,32.12){\circle*{1.33}}
		\put(129.31,32.12){\circle*{1.33}}
		\put(49.81,31.62){\circle*{1.33}}
		\put(79.31,31.12){\circle*{1.33}}
		\put(20.56,32.12){\line(0,-1){13}}
		\put(104.81,32.12){\line(0,-1){13}}
		\put(129.31,32.12){\line(0,-1){13}}
		\put(49.81,31.62){\line(0,-1){13}}
		\put(79.31,31.12){\line(0,-1){13}}
		\put(20.56,19.12){\line(-1,0){15}}
		\put(104.81,19.12){\line(-1,0){15}}
		\put(129.31,19.12){\line(-1,0){15}}
		\put(49.81,18.62){\line(-1,0){15}}
		\put(79.31,18.12){\line(-1,0){15}}
		\qbezier(5.56,19.12)(6.06,19.75)(5.56,18.87)
		\qbezier(89.81,19.12)(90.31,19.75)(89.81,18.87)
		\qbezier(114.31,19.12)(114.81,19.75)(114.31,18.87)
		\qbezier(34.81,18.62)(35.31,19.25)(34.81,18.37)
		\qbezier(64.31,18.12)(64.81,18.75)(64.31,17.87)
		\qbezier(5.56,18.87)(5.44,19.87)(5.81,18.87)
		\qbezier(89.81,18.87)(89.69,19.87)(90.06,18.87)
		\qbezier(114.31,18.87)(114.19,19.87)(114.56,18.87)
		\qbezier(34.81,18.37)(34.69,19.37)(35.06,18.37)
		\qbezier(64.31,17.87)(64.19,18.87)(64.56,17.87)
		\qbezier(5.81,18.87)(4.94,20.12)(5.56,19.37)
		\qbezier(90.06,18.87)(89.19,20.12)(89.81,19.37)
		\qbezier(114.56,18.87)(113.69,20.12)(114.31,19.37)
		\qbezier(35.06,18.37)(34.19,19.62)(34.81,18.87)
		\qbezier(64.56,17.87)(63.69,19.12)(64.31,18.37)
		\put(5.06,31.62){\circle*{1.33}}
		\put(89.31,31.62){\circle*{1.33}}
		\put(113.81,31.62){\circle*{1.33}}
		\put(34.31,31.12){\circle*{1.33}}
		\put(63.81,30.62){\circle*{1.33}}
		\put(5.06,32.12){\line(1,0){16}}
		\put(89.31,32.12){\line(1,0){16}}
		\put(113.81,32.12){\line(1,0){16}}
		\put(34.31,31.62){\line(1,0){16}}
		\put(63.81,31.12){\line(1,0){16}}
		\multiput(21.06,32.12)(-.125,.041667){6}{\line(-1,0){.125}}
		\multiput(105.31,32.12)(-.125,.041667){6}{\line(-1,0){.125}}
		\multiput(129.81,32.12)(-.125,.041667){6}{\line(-1,0){.125}}
		\multiput(50.31,31.62)(-.125,.041667){6}{\line(-1,0){.125}}
		\multiput(79.81,31.12)(-.125,.041667){6}{\line(-1,0){.125}}
		\put(20.31,32.37){\line(1,0){.25}}
		\put(104.56,32.37){\line(1,0){.25}}
		\put(129.06,32.37){\line(1,0){.25}}
		\put(49.56,31.87){\line(1,0){.25}}
		\put(79.06,31.37){\line(1,0){.25}}
		\multiput(49.56,18.87)(-.058490566,.0481132075){265}{\line(-1,0){.058490566}}
		\multiput(79.06,18.37)(-.058490566,.0481132075){265}{\line(-1,0){.058490566}}
		\put(5.06,32.12){\line(0,-1){12.75}}
		\put(89.31,32.12){\line(0,-1){12.75}}
		\put(113.81,32.12){\line(0,-1){12.75}}
		\put(34.31,31.62){\line(0,-1){12.75}}
		\put(63.81,31.12){\line(0,-1){12.75}}
		\multiput(49.56,31.87)(-.0555555556,-.0481481481){270}{\line(-1,0){.0555555556}}
		\multiput(79.06,31.37)(-.0555555556,-.0481481481){270}{\line(-1,0){.0555555556}}
		\put(92,-3){\makebox(0,0)[cc]{Figure 10}}
		\put(14,8){\makebox(0,0)[cc]{$G_{15}$}}
		\put(98.25,8){\makebox(0,0)[cc]{$G_{18}$}}
		\put(122.75,8){\makebox(0,0)[cc]{$G_{19}$}}
		\put(43.25,7.5){\makebox(0,0)[cc]{$G_{16}$}}
		\put(72.75,7){\makebox(0,0)[cc]{$G_{17}$}}
		\bezier{579}(5,32)(-5.75,24.75)(5.5,19.5)
		\bezier{579}(89.25,32)(78.5,24.75)(89.75,19.5)
		\qbezier(20.25,19.75)(21.25,7.625)(29.25,13)
		\qbezier(49.75,19.25)(50.75,7.125)(58.75,12.5)
		\qbezier(29.25,12.5)(29.75,20.5)(20.25,19.5)
		\qbezier(58.75,12)(59.25,20)(49.75,19)
		\qbezier(20.5,32.25)(29.625,22.75)(20.25,19.25)
		\qbezier(104.75,32.25)(113.875,22.75)(104.5,19.25)
		\qbezier(129.25,32.25)(138.375,22.75)(129,19.25)
		\qbezier(5.25,19)(12.125,7.125)(20.5,18.75)
		\qbezier(89.5,19)(96.375,7.125)(104.75,18.75)
		\qbezier(114,19)(120.875,7.125)(129.25,18.75)
		\qbezier(34.5,18.5)(42.875,7.375)(49.75,18.75)
		\qbezier(64,18)(72.375,6.875)(79.25,18.25)
		\qbezier(63.75,30.5)(71.125,43.125)(79,31.25)
		\qbezier(89.25,32)(96.5,45)(104.75,32)
		\qbezier(113.75,32)(121,45)(129.25,32)
		\multiput(114,19.25)(.0575471698,.0481132075){265}{\line(1,0){.0575471698}}
		\put(137.665,31.87){\circle*{1.33}}
		\put(160.665,31.62){\circle*{1.33}}
		\put(154.005,31.87){\circle*{1.33}}
		\put(177.005,31.62){\circle*{1.33}}
		\put(154.005,18.87){\circle*{1.33}}
		\put(177.005,18.62){\circle*{1.33}}
		\put(137.665,18.87){\circle*{1.33}}
		\put(160.665,18.62){\circle*{1.33}}
		\put(145.665,42.87){\circle*{1.33}}
		\put(168.665,42.62){\circle*{1.33}}
		\put(137.665,31.87){\line(1,0){16.33}}
		\put(160.665,31.62){\line(1,0){16.33}}
		\put(154.005,31.87){\line(0,-1){13}}
		\put(177.005,31.62){\line(0,-1){13}}
		\put(154.005,18.87){\line(-1,0){16.33}}
		\put(177.005,18.62){\line(-1,0){16.33}}
		\put(137.665,18.87){\line(0,1){13}}
		\put(160.665,18.62){\line(0,1){13}}
		\put(137.665,31.87){\line(3,4){8.33}}
		\put(160.665,31.62){\line(3,4){8.33}}
		\put(145.665,42.87){\line(3,-4){8.33}}
		\put(168.665,42.62){\line(3,-4){8.33}}
		\put(154.005,31.87){\line(-5,-4){16.33}}
		\put(154.005,18.87){\line(-5,4){16.33}}
		\put(145.665,42.87){\line(-1,-3){8}}
		\put(168.665,42.62){\line(-1,-3){8}}
		\put(145.665,42.87){\line(1,-3){8}}
		\put(168.665,42.62){\line(1,-3){8}}
		\put(145.275,7.75){\makebox(0,0)[cc]{$G_{20}$}}
		\put(168.275,7.5){\makebox(0,0)[cc]{$G_{21}$}}
		\qbezier(168.775,42.5)(181.025,41)(177.275,31.5)
		\qbezier(168.775,42.5)(157.4,41.375)(160.525,31.75)
		\end{picture}
			
	\end{center}
	
	Since $G_{15}$, $G_{18}$ and $G_{19}$ each contains a minor isomorphic to $G_{7}$, we discard them.  We also discard $G_{16}$ as it contains a minor isomorphic to $G_{6}.$  Therefore $Q=M(G)$, $G=G_{17}$ or $G$ is a coextension of $G_{17}$ by one or two elements. Observe that any coextension of $G_{17}$  by an edge  contains $G_{7}$ as a minor or contains  a $2$-edge cut.  We discard the former case as in this case $M$ contains a minor isomorphic to $M(G_{7}).$  In the latter case, $Q$ contains a  2-cocircuit, a contradiction to  Lemma \ref{c2nocckts}. Also, a coextension by an element of a graph containing 2-edge cut contains a 2-edge cut. 
	
	Hence $G=G_{17}.$  Therefore $G$ has eight edges and two disjoint 2-circuits.  Since $F_7$ has seven elements,  there is  $T \subseteq E(G)$ with $|T| = 3$ such that $M(G)_T/x \cong F_7$ for some $ x \in T.$  If a 2-circuit of $M(G)$ does not intersect $T$ or is contained in $T,$  then it remains a circuit in $M(G)_T$ by Lemma \ref{c18} and so we get a 2-circuit or  a loop  in $M(G)_T/x,$ a contradiction. Hence  $T$ contains exactly one edge from each 2-circuit of $G.$  We may assume that $x$ does not belong to a 2-circuit of $G.$   Then there is a triangle in $G$ containing exactly two edges from $T$ one of which is $x.$ Hence this 3-circuit of $Q$ is preserved in  $Q_T$ also. Therefore, we get a  2-circuit in $Q_T/x \cong F_7,$ a contradiction.  
			 
	\vskip.3cm
	\noindent 
	{\bf Case (iii). } Suppose $F \cong M^*(K_5)$.  
	
	Since $M$ avoids $M(G_6)$ and $M(G_7)$ as minors, the minor $N/a$ of $M$ also avoids them.	By Lemma \ref{c2mfm*5}, $N/a$  is isomorphic to $M(G_{14})$, where $G_{14}$ is the graph as shown in Figure 3. As $G_{14}=G_{8}$,  $M$ contains a minor isomorphic to $M(G_{8}),$ a  contradiction.
	
	\vskip.3cm
	\noindent 
	{\bf Case (iv).} Suppose  $F  \cong M^*(K_{3,3})$. 
	
	The matroid $N/a$ does not contain a minor $M(G_6)$ or $M(G_7)$. Therefore, by Lemma \ref{c2mfm*33}, $N/a$ is isomorphic to  $M(G_{13})$, where $G_{13}$ is the graphs as shown in Figure 3.   There are three pairs of parallel edges incident to the same vertex in $G_{13}$.  Therefore, if $T$ is a set containing one edge from each pair of parallel edges, then $M(G_{13})_T$ is graphic. However, $Q_T$ is not graphic. Hence $ Q \ncong M(G_{13}).$ Therefore $Q = M(G)$ is a coextension of $M(G_{13})$ by one, two or three elements.

	There are two coextensions of $G_{13}$ by an edge without containing  $G_{6}$ and $G_{7}$ as minors. They are shown in Figure 10 as $G_{20}$ and $G_{21}$. Note that $G_{21}=G_{9}$.   	Hence, if $Q$ arises from  $G_{9},$ then we get a contradiction to our assumption.

	Suppose $Q$ arises from $G_{20}.$  Then $Q$ is isomorphic to $M(G_{20})$ or it is a coextension of $M(G_{20})$ by one or two elements. Suppose $Q$ is isomorphic to $M(G_{20}).$  Then $Q$ is Eulerian and there is a set $T$ of elements of $Q$ with  $|T|=3$ such that $Q_T /x\cong M^*(K_{3,3}) $  for some $x \in T.$ Let $C^*$ be a cocircuit of $Q$ containing $T.$ Then $|C^*|$ is an even integer and so $|C^* - T|$ is an odd integer. By Definition \ref{c11}, both $T$ and $C^*- T$ are cocircuits in $Q_T.$ Hence $C^*-T$ is an odd cocircuit in $Q_T/x.$  This shows that $M^*(K_{3,3})$ contains an odd circuit, a contradiction to the fact that $M^*(K_{3,3})$ is an Eulerian matroid. Hence $Q$ is a coextension of $M(G_{20})$ by one or two elements. However, every coextension of $M(G_{20})$  without containing a 2-edge cut contains a minor isomorphic to $M(G_{7})$, a contradiction.
	
	Hence $M_T$ does not contain $F$ as a minor for any $F \in \mathcal{F}$.  Therefore $M_T$ is graphic. 
\end{proof}

\section{Forbidden Minors for the Splitting by any Set}

In previous sections,  we obtained the forbidden minors for the class of graphic matroids $M$ whose splitting matroids $M_T$ with $|T| = 3$  are graphic. The method developed there can be applied to  the general case.

Let $ k \geq 2$ be an integer. Denote by $ \mathcal{M}_k,$ the class of graphic matorids $M$ whose splitting matroids $M_T,$ for any $ T \subseteq E(M)$ with $|T| = k,$ are graphic.  

Theorem \ref{c2bm} shows that the class $ \mathcal{M}_2$ has three forbidden minors. In Theorem \ref{c2grspl3}, we have obtained forbidden minors for the class $ \mathcal{M}_3$, some of which are single-element extensions of forbidden minors for the class $ \mathcal{M}_2.$ Now, we see that this method of obtaining forbidden minors for the class $\mathcal{M}_3$ can be generalized to get forbidden minors for the class $ \mathcal{M}_k,$ where $k\geq 4.$

Suppose $M$ is a forbidden minor for the class $ \mathcal{M}_{k-1}.$ Then, there is  $T_1 \subseteq E(M)$ with $|T_1| = k-1$ such that $ M_{T_1}$ is not graphic. Let $ \tilde{M}$ be the extension  of $M$ by an element, say $x.$  Let $  T_1 \cup \{x\}=T.$ Then $ T \subseteq E(\tilde{M})$ with $|T| = k.$ Further, $\tilde{M}_T  \backslash x = M_{T_1}.$ Therefore $\tilde{M}_T$ is not graphic. Hence $\tilde{M}$ contains a forbidden minor for the class $\mathcal{M}_{k}.$  Thus, some forbidden minors for $ \mathcal{M}_k$ arise recursively from  the forbidden minors for the  class $ \mathcal{M}_{k-1}$.   

We can find the other type of forbidden minors for $\mathcal{M}_k$ by generalizing the results giving forbidden minors for the class $\mathcal{M}_3$.  Recall that $\mathcal{F}=\{F_7, F^*_7, M^*(K_5), M^*(K_{3,3})\}$.

The following lemma is a generalization of Lemma \ref{c2minle}.

\begin{lemma}   \label{genmin}
	Let $k\geq 4$  and let $M$ be a graphic matroid.  Suppose $M_T$ contains a minor isomorphic to $F$ for some $T \subseteq E(M)$ with $|T| = k$ and $ F \in \mathcal{F}$.  Then $M$ has a minor $N$ containing $T$  such that one of the following holds.
	\begin{enumerate}[label=(\roman*).]
		\item   $N_T\cong F.$
		\item   $N_T/ T'\cong F$ for some non-empty subset $T'$ of $T.$ 
		\item $N$ is isomorphic to a single element extension of one of the forbidden minors of $ \mathcal{M}_{k-1}.$
	\end{enumerate} 
\end{lemma}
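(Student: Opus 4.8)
The plan is to reproduce the proof of Lemma~\ref{c2minle} almost verbatim, now with an induction on $k$, using the forbidden-minor list of $\mathcal{M}_{k-1}$ in place of Theorem~\ref{c2bm}. Since $M_T$ has a minor isomorphic to $F$, choose disjoint sets $T_1,T_2\subseteq E(M)$ with $M_T\backslash T_1/T_2\cong F$, and split each as $T_i=T_i'\cup T_i''$, where $T_i'=T_i\cap T$ and $T_i''=T_i-T_i'$. Repeated application of Lemma~\ref{c2dcm}(i) gives $M_T\backslash T_1''/T_2''=(M\backslash T_1''/T_2'')_T$; put $N=M\backslash T_1''/T_2''$. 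Then $N$ is a minor of $M$ containing $T$, and $N_T\backslash T_1'/T_2'\cong F$ with $T_1',T_2'$ disjoint subsets of $T$. If $T_1'=\emptyset$ then $N_T/T_2'\cong F$, which is (i) when $T_2'=\emptyset$ and (ii) otherwise.

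Assume now $T_1'\neq\emptyset$; I aim for (iii) and split into cases according to $|T_1'|\in\{1,\dots,k\}$. If $|T_1'|=k$ then $T_1'=T$ and $T_2'=\emptyset$, so Lemma~\ref{c2dcm}(iii) gives $F\cong N_T\backslash T=N\backslash T$, making $F$ a minor of the graphic matroid $N$, which is impossible. If $|T_1'|=k-1$, then $Y:=T-T_1'$ is a single element $y$; by Lemma~\ref{c2dcm}(ii), $N_T\backslash T_1'=(N\backslash T_1')_Y$, and since splitting with respect to a singleton makes that element a coloop, $y$ is a coloop of $N_T\backslash T_1'$. As the $3$-connected matroid $F$ has no coloop, we must have $y\in T_2'$, hence $T_2'=\{y\}$ and $F\cong N_T\backslash T_1'/y=N_T\backslash T_1'\backslash y=N\backslash T$, again impossible. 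If $|T_1'|=1$, say $T_1'=\{z\}$, set $P=N\backslash z$ and $X=T-\{z\}$; then $P$ is graphic, $|X|=k-1$, and by Lemma~\ref{c2dcm}(ii), $N_T\backslash z=P_X$, so $P_X/T_2'\cong F$ with $T_2'\subseteq X$. Hence $P\notin\mathcal{M}_{k-1}$, so by the inductively available forbidden-minor characterization of $\mathcal{M}_{k-1}$, $P$ has a minor isomorphic to a forbidden minor $H$ of $\mathcal{M}_{k-1}$; a minimality argument on $N$, as in the last paragraph of the proof of Lemma~\ref{c2minle} but now with $|X|=k-1$ rather than $2$, then forces $P\cong H$, so $N=P\cup\{z\}$ is a single-element extension of $H$, which is (iii). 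Finally, when $2\le|T_1'|\le k-2$, put $P=N\backslash T_1'$ and $Y=T-T_1'$, so that $P$ is graphic, $2\le|Y|\le k-2$, and $P_Y$ has the minor $P_Y/T_2'\cong F$; here the inductive hypothesis (Lemma~\ref{genmin} applied with the integer $|Y|<k$, Lemma~\ref{c2minle} when $|Y|=3$, or Theorem~\ref{c2bm} when $|Y|=2$) applies to $P$, and one transports its conclusion back to $N$ using that every forbidden minor of $\mathcal{M}_j$ with $j<k-1$ occurs as a minor of a forbidden minor of $\mathcal{M}_{k-1}$ obtained by iterated single-element extensions.

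I expect the real work to be in the range $2\le|T_1'|\le k-2$, which is empty when $k=3$ and hence has no counterpart in Lemma~\ref{c2minle}. There one must verify that the minor of $P$ furnished by the inductive hypothesis can be promoted to a minor $N'$ of $M$ that still contains $T$ and is literally a single-element extension of a forbidden minor of $\mathcal{M}_{k-1}$, rather than a multi-element coextension of a smaller forbidden minor; doing this cleanly requires careful tracking of which deleted or contracted elements lie in $T$ at each step, and this bookkeeping is the main obstacle. A secondary point is the minimality argument in the case $|T_1'|=1$: it is immediate for $k=3$ because $|X|=2$, but for larger $k$ one again has to keep $T$ inside the minor while shrinking it, which should go through by the same tracking.
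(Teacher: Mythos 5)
The paper's own ``proof'' of this lemma is the single sentence that it is similar to the proof of Lemma~\ref{c2minle}, so your reconstruction is being measured against that template. Your setup (splitting $T_i$ into $T_i'$ and $T_i''$, passing to $N=M\backslash T_1''/T_2''$ via Lemma~\ref{c2dcm}(i)) and your treatment of the cases $T_1'=\emptyset$, $|T_1'|=k$ and $|T_1'|=k-1$ are exactly the paper's argument transposed to general $k$, and they are correct. The one place where you complicate matters unnecessarily is the range $2\le|T_1'|\le k-2$: routing through $P=N\backslash T_1'$ and the class $\mathcal{M}_{j}$ with $j=k-|T_1'|<k-1$ creates precisely the ``transport'' problem you identify as the main obstacle, namely converting a forbidden minor of $\mathcal{M}_j$ sitting inside $P$ into the statement that $N$ is a single-element extension of a forbidden minor of $\mathcal{M}_{k-1}$. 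That detour is avoidable: for \emph{any} nonempty $T_1'$ pick a single $z\in T_1'$ and apply Lemma~\ref{c2dcm}(ii) once to get $N_T\backslash z=(N\backslash z)_{T-\{z\}}$, which still has $F$ as a minor (delete $T_1'-\{z\}$ and contract $T_2'$ afterwards); hence $N\backslash z\notin\mathcal{M}_{k-1}$, and the same minimality/replacement step that the paper invokes via the citation to \cite{shikare2010excluded} in the $k=3$ case lands you directly in conclusion (iii). This uniform argument subsumes your cases $|T_1'|=1$ and $2\le|T_1'|\le k-2$ simultaneously and only ever references $\mathcal{M}_{k-1}$, so the bookkeeping you worry about disappears. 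The residual gap --- that $N\backslash z$ merely \emph{contains} a forbidden minor of $\mathcal{M}_{k-1}$ rather than being one, so that one must shrink $N$ while retaining $z$ --- is present in the paper's own proof of Lemma~\ref{c2minle} as well, and is not something your proposal introduces.
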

\begin{proof}
	The proof is similar to the proof of Lemma \ref{c2minle}. 
\end{proof}

As in  the proof of Lemmas \ref{c2nocckts} and \ref{c2nocirut}, one can easily prove that the minor $N$ satisfying Condition (i) or (ii) of Lemma \ref{genmin} does not contain a coloop and a 2-cocircuit.   The following lemma generalizes Lemma \ref{c2mml}. 	
\begin{lemma} \label{gecammm}
	Let $k \geq 4$ and let $M$ be a graphic matroid.   Suppose $M$ does not contain a coloop and 2-cocircuit and $M_T/T' \cong F$ for some $T\subseteq E(M)$ with $|T|= k$ and $T' \subseteq T$, where $F \in \mathcal{F}$.   Then there is a binary matroid $N$ containing an element    $a$ such that $N \backslash a \cong F.$ Further,  $N/a$ is a minor of $M$ such that either $M = N/a $ or $M$ is a coextension of $N/a$ by at most $k$ elements.
\end{lemma}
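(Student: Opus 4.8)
The plan is to reproduce the argument of Lemma~\ref{c2mml} almost verbatim; the only change is that the contracted set $T'$ now has size at most $k$ rather than at most $3$, which is precisely what yields the conclusion ``coextension by at most $k$ elements''. No new idea is needed: the whole mechanism is the auxiliary binary matroid $M'_T$ of Definition~\ref{c2esd} together with its two basic properties recorded in Lemma~\ref{c2esc}.

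Here is the sequence of steps I would carry out. First, fix the $k$-element set $T\subseteq E(M)$ and the subset $T'\subseteq T$ supplied by the hypothesis $M_T/T'\cong F$, and form $M'_T$ on ground set $E(M)\cup\{a\}$ using this same set $T$. By Lemma~\ref{c2esc} we have $M'_T\backslash a = M_T$ and $M'_T/a\cong M$. Next, set $N:=M'_T/T'$. Since $a\notin T\supseteq T'$, the operations $\backslash a$ and $/T'$ act on disjoint parts of the ground set and hence commute, so $N\backslash a=(M'_T\backslash a)/T'=M_T/T'\cong F$, which gives the first assertion with this $a\in E(N)$. For the second assertion, the same commutativity gives $N/a=(M'_T/a)/T'\cong M/T'$, so $N/a$ is the minor $M/T'$ of $M$; if $T'=\emptyset$ then $N/a\cong M$, i.e.\ $M=N/a$, and otherwise $M$ is a coextension of $N/a=M/T'$ by the set $T'$, with $|T'|\le|T|=k$.

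I do not foresee a genuine obstacle. The hypothesis that $M$ has no coloop and no $2$-cocircuit plays no role in this construction — it is recorded only because that is the setting in which the lemma will later be invoked (it is the analogue of the ``minimal with respect to $F$'' condition from the $k=3$ case, and it was likewise unused in the proof of Lemma~\ref{c2mml}). The only point requiring care is bookkeeping: the set $T$ must be the same throughout, so that the column $a$ adjoined in forming $M'_T$ is precisely the one supported on $T$; once that is pinned down, steps reduce to the elementary identity $X\backslash a/Y=X/Y\backslash a$ for $\{a\}$ and $Y$ disjoint, and the proof is complete.
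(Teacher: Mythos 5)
Your proposal is correct and is exactly the argument the paper intends: the paper's proof of this lemma consists of the single line ``The proof is similar to the proof of Lemma~\ref{c2mml},'' and your construction $N=M'_T/T'$ with the commutation of $\backslash a$ and $/T'$ is precisely that adaptation, with $|T'|\le k$ replacing $|T'|\le 3$. Your side remark that the no-coloop/no-$2$-cocircuit hypothesis is not actually used in the construction is also accurate.
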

\begin{proof}
	The proof is similar to the proof of Lemma \ref{c2mml}. 
\end{proof}

We 	combine Lemmas \ref{c2mff*7}, \ref{c2mff7}, \ref{c2mfm*33} and \ref{c2mfm*5} in the next result. 
\begin{lemma} \label{gc2mfaall}
	Let $N$ be a binary matroid and $a \in E(N)$. If $N \backslash a \cong  F$ for some $F \in \mathcal{F}$ and $N /a$ is the graphic matroid, then $N/a$ is isomorphic to one of the  circuit matroid of graphs $G_{10}$, $G_{11}$, $G_{12}$, $G_{13}$ and $G_{14}$, where $G_{10}$, $G_{11}$, $G_{12}$, $G_{13}$ and $G_{14}$ are the graphs as shown in Figure 3.
\end{lemma}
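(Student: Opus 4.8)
The plan is to prove the lemma by splitting into four cases according to which member of $\mathcal{F}$ is isomorphic to $N\backslash a$, and quoting the corresponding lemma of Section~4 in each case. First I would recall the common setup used in those lemmas: since $N/a$ is graphic we may write $N/a=M(G)$ for a \emph{connected} graph $G$ (the cycle matroid of a graph depends only on its blocks, so we may arrange the blocks in a connected, tree-like fashion without changing the matroid), and $a$ is neither a loop nor a coloop of $N$, since otherwise $N/a=N\backslash a\cong F$ would not be graphic by Theorem~\ref{c1cgm}; hence $a$ lies in a circuit of $N$ of size at least two and $N$ is connected.

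Next I would dispatch the four cases directly. If $N\backslash a\cong F_7^*$, Lemma~\ref{c2mff*7} gives $N/a\cong M(G_{10})$ or $M(G_{11})$. If $N\backslash a\cong F_7$, Lemma~\ref{c2mff7} gives $N/a\cong M(G_{12})$. If $N\backslash a\cong M^*(K_{3,3})$, Lemma~\ref{c2mfm*33} gives $N/a\cong M(G_{13})$. If $N\backslash a\cong M^*(K_5)$, Lemma~\ref{c2mfm*5} gives $N/a\cong M(G_{14})$. In every case $N/a$ is isomorphic to the cycle matroid of one of $G_{10},\dots,G_{14}$, which is exactly the assertion.

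The delicate point, and the step I expect to be the real obstacle, is that Lemmas~\ref{c2mfm*33} and~\ref{c2mfm*5} carry the extra hypothesis that $N/a$ (respectively $N$) has no minor isomorphic to $M(G_6)$ or $M(G_7)$, whereas the present statement omits it. I see two ways to close this gap. The clean one is to add exactly the same hypothesis to Lemma~\ref{gc2mfaall}; this costs nothing in the intended application of Section~6, where the lemma is only applied to a minor $N/a$ of a graphic matroid lying in $\mathcal{M}_k$, which by construction has no minor isomorphic to $M(G_6)=M(G_{10})$ or $M(G_7)=M(G_{11})$. The alternative is to try to remove the hypothesis outright, which forces one back into the case analyses inside Lemmas~\ref{c2mfm*33} and~\ref{c2mfm*5} to check that every graph excluded there by that hypothesis (for $M^*(K_{3,3})$: the graph $H_v$, and $H_{iv}$ with a loop adjoined; for $M^*(K_5)$: $H_{vii}$, $H_{viii}$, and the relevant parallel-edge and loop extensions of the graphs in Figures~7--9) does not in fact occur, i.e.\ admits no binary single-element coextension $N$ with $N\backslash a$ isomorphic to $M^*(K_{3,3})$ or $M^*(K_5)$. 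That verification is the substantive work; once it is in hand, the four-case skeleton above is entirely routine.
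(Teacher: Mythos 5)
Your four-case dispatch is exactly what the paper intends: the paper offers no proof of this lemma at all beyond the sentence ``We combine Lemmas \ref{c2mff*7}, \ref{c2mff7}, \ref{c2mfm*33} and \ref{c2mfm*5} in the next result,'' so your skeleton is the same argument made explicit. The hypothesis mismatch you flag is a genuine defect of the statement as printed, not of your reasoning: Lemmas \ref{c2mfm*33} and \ref{c2mfm*5} exclude $M(G_6)$ and $M(G_7)$ as minors, and without that exclusion their case analyses do not rule out, e.g., the graph $H_v$ plus a loop in the $M^*(K_{3,3})$ case, so the lemma as stated does not follow from them. Your first remedy is the right one and matches how the lemma is actually used: in the classification of forbidden minors for $\mathcal{M}_k$ the relevant quotients $N/a$ are minors of matroids already assumed to avoid $M(G_{10})=M(G_6)$ and $M(G_{11})=M(G_7)$ (up to the identifications made in Section 5), so adding that hypothesis costs nothing. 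The second remedy (removing the hypothesis by checking that the excluded graphs admit no suitable coextension) is nowhere carried out in the paper, so you should not feel obliged to supply it.
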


It follows from Lemmas \ref{gecammm} and \ref{gc2mfaall} that any forbidden minor $P$ for the class $\mathcal{M}_k $ is isomorphic to 

\begin{enumerate}[label=(\roman*)]

\item  a single-element extension of some forbidden minor for the class $\mathcal{M}_{k-1}$ or 

\item  $M(G_i)$ for some $i \in \{10,11,12,13,14\}$ or 

\item  a coextension of $M(G_i)$ by $h$ elements for some $i \in \{10,11,12,13,14\}$ and $1\leq h \leq k$. 
\end{enumerate}

\end{document}